\documentclass[11pt]{article}
\usepackage[margin=1in]{geometry}
\usepackage{amsmath,amssymb,amsthm,mathtools}
\usepackage{booktabs}
\usepackage[expansion=false]{microtype}
\usepackage[hidelinks]{hyperref}
\usepackage{enumitem}
\usepackage{longtable}
\usepackage{array}
\usepackage{xcolor}

\numberwithin{equation}{section}
\newtheorem{theorem}{Theorem}
\newtheorem{proposition}[theorem]{Proposition}
\newtheorem{lemma}[theorem]{Lemma}
\newtheorem{corollary}[theorem]{Corollary}
\theoremstyle{definition}
\newtheorem{definition}[theorem]{Definition}
\newtheorem{example}[theorem]{Example}
\theoremstyle{remark}
\newtheorem{remark}[theorem]{Remark}

\newcommand{\Z}{\mathbb Z}
\newcommand{\Q}{\mathbb Q}

\newcommand{\fall}[2]{(#1)_{#2}}
\newcommand{\OEIS}[1]{\href{https://oeis.org/#1}{\texttt{#1}}}
\title{Universal Shift Congruences for Weighted Permutation Sequences}
\ifdefined\AnonymousSubmission
  \author{Anonymous manuscript}
  \hypersetup{pdftitle={Universal Shift Congruences for Weighted Permutation Sequences},pdfauthor={Anonymous manuscript}}
\else
  \author{Ahaan Kallat\\Northeastern University\\\texttt{kallat.a@northeastern.edu}}
  \hypersetup{pdftitle={Universal Shift Congruences for Weighted Permutation Sequences},pdfauthor={Ahaan Kallat}}
\fi
\date{October 2026}

\begin{document}
\maketitle

\begin{abstract}
The involution numbers $1,1,2,4,10,26,\ldots$ repeat modulo $3$, $5$ and $7$.  We explain this, and many similar patterns, by one rule.  Many sequences count permutations in which every cycle of length $m$ carries a weight $w_m$ (involutions, derangements, ``sets of lists'', and many others in the OEIS).  Writing $A(n)$ for such a count, the rule is $A(n+k)\equiv A(n)A(k)$ modulo $\operatorname{lcm}(n,k)$.  It generalizes our earlier proof of the case modulo $k$ for one such sequence, but now it holds for all weights at once, as a coefficientwise congruence between universal polynomials, and we find the largest modulus that works for the universal polynomials, $nk/\operatorname{rad}(\gcd(n,k))$, and show that nothing larger does.  We then show that passing from $k$ to $pk$ behaves like raising to the $p$-th power modulo a high power of $p$, with a sharp exponent for the polynomials, which gives $A(p^r)\equiv(w_1+(p-1)w_p)^{p^{r-1}}$ modulo $p^r$ for odd $p$, and explains why the prime $2$ is special.  A family of sequences defined by Lagrange inversion fits the same framework, which proves Bala's conjecture for the whole family (the single sequence \OEIS{A278070} had been proved earlier by an automated system) and the divisibility observed in the entries \OEIS{A251568} and \OEIS{A251569}.  As further applications we prove OEIS conjectures of Richman, Hanna, van Hoeij and Bala (\OEIS{A227937}), and the theorems apply to 101 OEIS sequences, whose congruences are recorded in a supplementary catalogue.  Higher-order versions of the content formula are treated in a separate paper.
\end{abstract}

\noindent\textbf{Keywords:} cycle index, logarithmic weights, exponential generating function, congruence, Lagrange inversion, two-adic valuation, symmetric group.\\
\noindent\textbf{2020 Mathematics Subject Classification:} 05A15, 05A19, 11A07, 11B65.

\section{Introduction}\label{sec:introduction}

The \emph{involution numbers} $A(n)$ count the ways to arrange $n$ people either alone or in pairs.\footnote{This is the OEIS sequence \OEIS{A000085}.}  Reduced modulo a small number they are surprisingly regular.
\begin{center}
\renewcommand{\arraystretch}{1.0}
\begin{tabular}{r|cccccccccc}
$n$ & 0 & 1 & 2 & 3 & 4 & 5 & 6 & 7 & 8 & 9\\\hline
$A(n)$ & 1 & 1 & 2 & 4 & 10 & 26 & 76 & 232 & 764 & 2620\\
$A(n)\bmod 3$ & 1 & 1 & 2 & 1 & 1 & 2 & 1 & 1 & 2 & 1\\
$A(n)\bmod 5$ & 1 & 1 & 2 & 4 & 0 & 1 & 1 & 2 & 4 & 0\\
$A(n)\bmod 7$ & 1 & 1 & 2 & 4 & 3 & 5 & 6 & 1 & 1 & 2
\end{tabular}
\end{center}
Modulo $3$ the block $1,1,2$ repeats forever, and modulo $5$ and $7$ blocks of length $5$ and $7$ repeat.  Indeed $A(3)=4$, $A(5)=26$ and $A(7)=232$ are each one more than a multiple of the modulus, which suggests the rule $A(n+k)\equiv A(n)A(k)\pmod k$.

The rule holds for a large class of sequences.  Fix a sequence of integers $w=(w_1,w_2,\ldots)$, give each permutation of $\{1,\ldots,n\}$ the weight obtained by letting every cycle of length $m$ contribute a factor $w_m$, and let $A_w(n)$ be the sum of the weights of all $n!$ permutations ($A_w(0)\coloneqq1$).  Involutions are given by $w_1=w_2=1$ and $w_m=0$ otherwise, while derangements\footnote{The OEIS sequence \OEIS{A000166}.} are given by $w_1=0$, $w_m=1$ for $m\ge2$.  We also let the weights be unknowns, and write $C_n(X_1,\ldots,X_n)$ for $A_w(n)$ with $w_j$ replaced by an indeterminate $X_j$, so that for example $C_1=X_1$ and $C_2=X_1^2+X_2$.\footnote{In other words, $C_n$ is $n!$ times the cycle index polynomial of the symmetric group $S_n$, and Proposition~\ref{prop:cycle-index} gives its coefficients.}  Proving a congruence once for the polynomials $C_n$ proves it for every weight sequence.

\begin{theorem}[Shift congruence]\label{thm:universal}
For every $n,k\ge1$,
\[
 C_{n+k}\equiv C_nC_k
 \pmod{\operatorname{lcm}(n,k)\Z[X_1,X_2,\ldots]}.
\]
Consequently, for every integer weight sequence $w$ and all $n,k\ge1$,
\[
 A_w(n+k)\equiv A_w(n)A_w(k)\pmod{\operatorname{lcm}(n,k)},
\]
and in particular $A_w(n+k)\equiv A_w(n)A_w(k)\pmod k$ for all $n\ge0$, $k\ge1$.
\end{theorem}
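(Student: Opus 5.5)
The plan is to encode both sides in a two-variable exponential generating function and then extract one factor of $n$ (and, symmetrically, one factor of $k$) by differentiating. By the exponential formula,
\[
 \sum_{N\ge0} C_N\frac{t^N}{N!}=\exp\Bigl(\sum_{m\ge1}X_m\frac{t^m}{m}\Bigr)=:e^{L(t)} .
\]
Put $T=L(x+y)$ and $S=L(x)+L(y)$. Expanding $(x+y)^N$ binomially gives
\[
 n!\,k!\,[x^ny^k]\,e^{T}=C_{n+k},
 \qquad
 n!\,k!\,[x^ny^k]\,e^{S}=C_nC_k .
\]
So the difference $\Delta_{n,k}:=C_{n+k}-C_nC_k$ equals $n!\,k!\,[x^ny^k]\,D$, where $D=e^{T}-e^{S}$.

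To obtain the factor $n$, I use the identity $n!\,[x^n]G=n\cdot(n-1)!\,[x^{n-1}]\,\partial_xG$ for $n\ge1$. This gives
\[
 \Delta_{n,k}=n\cdot (n-1)!\,k!\,[x^{n-1}y^k]\,\partial_xD .
\]
It then remains to show that the right-hand coefficient, without the factor $n$, lies in $\Z[X_1,X_2,\ldots]$. Since $\partial_x T=L'(x+y)=\sum_m X_m(x+y)^{m-1}$ and $\partial_x S=L'(x)=\sum_m X_m x^{m-1}$, we have
\[
 \partial_x D=\sum_{m\ge1}X_m\Bigl((x+y)^{m-1}e^{T}-x^{m-1}e^{S}\Bigr).
\]
I treat the two pieces separately.

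\emph{First piece.} Write $(x+y)^{m-1}e^{T}=\sum_N C_N (x+y)^{N+m-1}/N!$. The relevant term has $N=n+k-m$, and its contribution to $(n-1)!\,k!\,[x^{n-1}y^k]$ is
\[
 \frac{(n-1)!\,k!}{N!}\binom{n-1+k}{n-1}C_N=\frac{(n+k-1)!}{(n+k-m)!}\,C_{n+k-m},
\]
which is an integer multiple of $C_{n+k-m}$ (and vanishes for $m>n+k$).

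\emph{Second piece.} Because $e^{S}=e^{L(x)}e^{L(y)}$ factors, the contribution is
\[
 \frac{(n-1)!}{(n-m)!}\,C_{n-m}\,C_k ,
\]
again integral (and zero for $m>n$). Hence $\Delta_{n,k}\in n\,\Z[X_1,X_2,\ldots]$. In fact this computation recovers the recurrence-type expression
\[
 \Delta_{n,k}=n\sum_m X_m\Bigl(\tfrac{(n+k-1)!}{(n+k-m)!}C_{n+k-m}-\tfrac{(n-1)!}{(n-m)!}C_{n-m}C_k\Bigr),
\]
which I would also verify directly as a sanity check.

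Since $D$ is symmetric in $x$ and $y$, the same argument with $\partial_y$ gives $\Delta_{n,k}\in k\,\Z[X_1,\ldots]$. Each integer coefficient of $\Delta_{n,k}$ is then divisible by both $n$ and $k$, hence by $\operatorname{lcm}(n,k)$. This proves the coefficientwise congruence.

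Specializing $X_j\mapsto w_j$ is a ring homomorphism $\Z[X_1,X_2,\ldots]\to\Z$, which gives the statement for $A_w$. The final clause adds the case $n=0$, where $A_w(0)=1$ makes the congruence trivial; for $n\ge1$, divisibility by $k$ follows from divisibility by $\operatorname{lcm}(n,k)$.

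The only point needing care, and the one I expect to be the main obstacle, is the bookkeeping in the first piece: checking that the binomial factor from $(x+y)^{N+m-1}$ exactly cancels $N!$, leaving the falling factorial $\fall{n+k-1}{m-1}$. It is also worth confirming that no denominator $1/m$ survives. It does not, because differentiating $X_mt^m/m$ removes the $1/m$, and this is precisely why differentiating is the right move.
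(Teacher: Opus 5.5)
There is a genuine gap, and it is not in the step you flagged but in the one before it. The identity $n!\,[x^n]G=n\cdot(n-1)!\,[x^{n-1}]\,\partial_xG$ is false. Since $[x^{n-1}]\,\partial_xG=n\,[x^n]G$, the correct statement is $n!\,[x^n]G=(n-1)!\,[x^{n-1}]\,\partial_xG$. The factor $n$ produced by differentiating $x^n$ is used up exactly in passing from $n!$ to $(n-1)!$. You can check this with $G=x^n$: the left side is $n!$, while your right side is $n\cdot n!$.

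In the exponential normalization, differentiation is only an index shift and extracts no factor. Your two pieces are computed correctly, so what they actually give is
\[
 \Delta_{n,k}=\sum_{m\ge1}X_m\Bigl(\fall{n+k-1}{m-1}\,C_{n+k-m}-\fall{n-1}{m-1}\,C_{n-m}C_k\Bigr),
\]
with no prefactor $n$. This is just the falling-factorial recurrence applied to $C_{n+k}$ and to $C_n$. Integrality of the bracket then says nothing about divisibility. The sanity check you proposed would have caught this: for $n=2$, $k=1$ the sum equals $2X_1X_2+2X_3=C_3-C_2C_1$ itself, not half of it.

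The missing ingredient is an arithmetic comparison modulo $k$, and your displayed identity is the right place to apply it. Prove $k\mid\Delta_{n,k}$ by induction on $n$ (the case $n=0$ is trivial).
\begin{enumerate}[label=(\roman*)]
\item For $1\le m\le n$, the falling factorials agree termwise, $\fall{n+k-1}{m-1}\equiv\fall{n-1}{m-1}\pmod k$. The inductive hypothesis gives $C_{(n-m)+k}\equiv C_{n-m}C_k\pmod k$, so these terms cancel modulo $k$.
\item For $n<m\le n+k$, the product $\fall{n+k-1}{m-1}$ runs from $n+k-1\ge k$ down to $n+k-m+1\le k$, so it contains the factor $k$. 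Meanwhile $\fall{n-1}{m-1}=0$.
\end{enumerate}
Hence $k\mid\Delta_{n,k}$, and swapping the roles of $n$ and $k$ (your $\partial_y$ version) gives $n\mid\Delta_{n,k}$. This is exactly the paper's proof via Lemma~\ref{lem:falling}. Alternatively, the factor $k$ can come from the free action of a $k$-cycle on mixed permutations (Remark~\ref{rem:group-action}). Either way, the divisibility is a genuinely arithmetic fact, not a consequence of a formal generating-function manipulation.
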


\begin{remark}
The first line is what the proofs give, and the others are special cases.  Taking $w_1=w_2=1$ and all other weights $0$ recovers the periods $3,5,7$ in the table above, because the theorem says $A(n+k)\equiv A(n)$ whenever $A(k)\equiv1\pmod k$.  Since the congruence is coefficientwise, it survives every substitution for the variables, including substitutions by polynomials (Remark~\ref{rem:substitutions}).  For a fixed modulus $k$, the shift congruence is equivalent to each residue-class subsequence $A_w(r+qk)$ being a geometric progression with ratio $A_w(k)$ modulo $k$ (Remark~\ref{thm:fixed-modulus-classification}).
\end{remark}

A short calculation (Proposition~\ref{prop:cycle-index}) shows that the numbers $A_w(n)$ are the coefficients of an exponential generating function (e.g.f.):
\[
 \sum_{n\ge0}A_w(n)\frac{x^n}{n!}=\exp\Bigl(\sum_{m\ge1}w_m\frac{x^m}{m}\Bigr).
\]
Conversely, every power series with constant term $1$ equals $\exp\bigl(\sum_{m\ge1}u_mx^m/m\bigr)$ for unique rational numbers $u_m$, which are its \emph{logarithmic coordinates} (Section~\ref{sec:coordinates-intro}), and the series coming from integer weights are exactly those whose logarithmic coordinates are integers.  Many entries of the On-Line Encyclopedia of Integer Sequences (OEIS)~\cite{OEIS} have this form.

\begin{example}[Involutions modulo $4$]\label{ex:mod4}
For $k=4$ we have $A(4)=10\equiv2\pmod4$, so the theorem predicts only $A(n+4)\equiv2A(n)\pmod 4$.  Indeed modulo $4$ the involution numbers are $1,1,2,0,2,2,0,0,0,\ldots$, and the factor $2$ keeps multiplying and eventually kills everything, so there is no period.  This shows that the factor $A_w(k)$ cannot be dropped from the theorem, and that a sequence satisfying the shift congruence need not be periodic.
\end{example}

\begin{example}[Integrality of the logarithmic weights]\label{ex:integrality}
The series $1+x^2/2$ has integer exponential coefficients $a_0=1$, $a_1=0$, $a_2=1$ and $a_n=0$ for $n\ge3$, but $a_4-a_2^2=-1$ is odd, so $a_{n+k}\equiv a_na_k\pmod k$ fails at $n=k=2$.  Here $\log(1+x^2/2)=x^2/2-x^4/8+\cdots$, so $u_4=-\tfrac12$ is not an integer.  Thus integrality of the numbers $A(n)$ alone is not enough, and integrality of the logarithmic coordinates is the right hypothesis.
\end{example}

\begin{remark}
For a prime-power modulus $p^e$ the proofs apply verbatim to weights in $\Z_{(p)}$, since the polynomial congruences hold in $\Z[X_1,X_2,\ldots]$.  The same applies to any modulus $k$ when the weights $w_m$ are rational numbers whose denominators are coprime to $k$ (that is, $w_m\in\Z_{(k)}$, the rationals with denominator prime to $k$), and in that case $A_w(n)\in\Z_{(k)}$ and $A_w(n+k)\equiv A_w(n)A_w(k)\pmod{k\Z_{(k)}}$.  When the numbers $A_w(n)$ are integers this is a congruence in $\Z$, because an integer lying in $k\Z_{(k)}$ is divisible by $k$.  There are higher-order analogues of the content formula, for products of several blocks.
\end{remark}

Here is a heuristic for the rule.  Split the $n+k$ points into a set $P$ of $n$ points and a set $Q$ of $k$ points, and let the cyclic group $\Z/k$ act on the permutations by rotating the points of $Q$ along a fixed $k$-cycle.  This action preserves the cycle type of a permutation, and hence its weight.  A permutation with a cycle meeting both $P$ and $Q$ is moved by every nontrivial rotation, so these permutations fall into orbits of exactly $k$ elements, all of the same weight, and their total weight is a multiple of $k$.

What is left are the permutations in which no cycle meets both sets.  These are the pairs consisting of a permutation of $P$ and a permutation of $Q$, their weights multiply, and so they contribute exactly $A(n)A(k)$.  Thus $A(n+k)\equiv A(n)A(k)\pmod k$.  This argument is made rigorous in Section~\ref{sec:universal} (Remark~\ref{rem:group-action}), and it is the argument of~\cite[Proposition~2.2]{KallatBala}.

\subsection{Notation}\label{sec:notation}

We write $[x^n]F$ for the coefficient of $x^n$ in a power series $F$, and $\fall{n}{r}\coloneqq n(n-1)\cdots(n-r+1)$ for the falling factorial.  For a prime $p$, $v_p(m)$ is the exponent of $p$ in $m$, and $\operatorname{rad}(m)$ is the product of the distinct primes dividing $m$.  The \emph{content} of a polynomial with integer coefficients is the greatest common divisor of its coefficients.  For a partition $\lambda$ with $m_j$ parts equal to $j$, $X_\lambda\coloneqq\prod_jX_j^{m_j}$, and $[X_\lambda]P$ is the coefficient of $X_\lambda$ in a polynomial $P$.  For polynomials, $P\equiv Q\pmod m$ means that every coefficient of $P-Q$ is divisible by $m$.\footnote{This implies the same congruence after substituting any integers for the variables, but not conversely, since $X^2-X$ is even at every integer and yet has odd coefficients.}

\subsection{The best modulus}

The modulus $\operatorname{lcm}(n,k)$ in Theorem~\ref{thm:universal} can be improved slightly, and the improvement is best possible.

\begin{theorem}[The best modulus, see Theorem~\ref{thm:optimal-divisor}]\label{thm:intro-best-modulus}
For $n,k\ge1$, the content of $C_{n+k}-C_nC_k$ is exactly $nk/\operatorname{rad}(\gcd(n,k))$.  Consequently $A_w(n+k)\equiv A_w(n)A_w(k)$ modulo $nk/\operatorname{rad}(\gcd(n,k))$ for all integer weights, and no larger modulus works for the polynomials, that is, for weights in an arbitrary commutative ring.
\end{theorem}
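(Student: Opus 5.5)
The plan is to compute the coefficients of $C_{n+k}-C_nC_k$ explicitly and then argue one prime at a time.

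\textbf{Explicit coefficients.} By Proposition~\ref{prop:cycle-index}, $[X_\lambda]C_N=N!/z_\lambda$, where $z_\lambda=\prod_j j^{m_j}m_j!$. In $C_nC_k$, the coefficient of $X_\lambda$ (with $|\lambda|=N=n+k$) is a sum over the ways of writing $\lambda=\mu\cup\nu$ with $|\mu|=n$ and $|\nu|=k$. Since $z_\lambda/(z_\mu z_\nu)=\prod_j\binom{m_j}{m_j(\mu)}$, this sum collapses to
\[
[X_\lambda](C_{n+k}-C_nC_k)=\frac{1}{z_\lambda}\Bigl(N!-n!\,k!\,S_\lambda\Bigr),\qquad S_\lambda\coloneqq[t^n]\prod_j(1+t^j)^{m_j}.
\]
So the theorem reduces to the following statement for each prime $p$. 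Put $a=v_p(n)$, $b=v_p(k)$, and $\delta=1$ if $a,b\ge1$, $\delta=0$ otherwise. We must show that $v_p$ of the displayed quantity is always at least $a+b-\delta$, with equality for some $\lambda$. The Chinese remainder theorem then assembles the primes into the content $nk/\operatorname{rad}(\gcd(n,k))$. The consequences for integer weights, and for weights in an arbitrary commutative ring, follow because the congruence is coefficientwise.

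\textbf{Lower bound.} Theorem~\ref{thm:universal} already gives valuation at least $\max(a,b)$, so the work is to gain the extra factor. I first considered the natural group-action strengthening: let $\Z/n\times\Z/k$ act by rotating $P$ and $Q$ along fixed cycles. The stabilizer $H$ of a permutation with a mixed cycle injects into each factor, since a rotation of $P$ alone commuting with $\sigma$ must fix the points of $P$ in that cycle, and a nontrivial rotation fixes nothing. Hence $H$ is cyclic of order dividing $\gcd(n,k)$. However, orbits can genuinely be as small as $nk/\gcd(n,k)$; for $n=k=4$, an alternating $8$-cycle already gives an orbit of size $4$. So orbit sizes alone will not give $a+b-\delta$, and cancellation inside orbit families must be used.

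I would therefore work with the formula instead. Write $N!-n!k!S_\lambda=n!k!\bigl(\binom{N}{n}-S_\lambda\bigr)$. The term $\binom{N}{n}$ is $[t^n](1+t)^N$, and $S_\lambda=[t^n]\prod_j(1+t^j)^{m_j}$. Modulo $p$, the identity $(1+t^{j})^{p}\equiv 1+t^{jp}$ and its higher-power lifts (Frobenius congruences $(1+t)^{p^e}\equiv(1+t^p)^{p^{e-1}}\pmod{p^e}$) let one compare the two products factor by factor. The plan is to group the cycles of $\lambda$ by the $p$-part of their lengths, and then bound $v_p\bigl(\binom{N}{n}-S_\lambda\bigr)+v_p(n!k!)-v_p(z_\lambda)$ from below by Kummer-type carry counting. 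This is where I expect the main obstacle: the cases where $p$ divides both $n$ and $k$, where the loss of exactly one factor $p$ (the $\operatorname{rad}$) must come out precisely.

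\textbf{Sharpness.} For each $p$ I would exhibit one partition $\lambda$ attaining valuation exactly $a+b-\delta$. The first candidates to test are $\lambda=(p^a,p^b,1^{N-p^a-p^b})$ and its variant $\lambda=(p^{a+b},1^{N-p^{a+b}})$, chosen so that $z_\lambda$ absorbs as much $p$-power as possible while $S_\lambda$ stays simple enough to evaluate exactly. I would compute the valuation for these by Lucas/Kummer, adjusting the choice of $\lambda$ if the first attempt overshoots.
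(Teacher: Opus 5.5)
Your reduction of each coefficient to $\frac{1}{z_\lambda}\bigl(N!-n!\,k!\,S_\lambda\bigr)$ is correct. The gap is that the heart of the theorem, the bound $v_p\ge a+b-1$ at primes dividing both $n$ and $k$, is only announced (``Kummer-type carry counting''), not proved. That route is also hard. Already for $\lambda=(p^{N/p})$ one has $S_\lambda=\binom{N/p}{n/p}$ and $v_p(n!\,k!/z_\lambda)=-v_p\binom{N/p}{n/p}$. So this single coefficient needs a Jacobsthal-type congruence $\binom Nn\equiv\binom{N/p}{n/p}$ modulo $p^{a+b-1+v_p\binom{N/p}{n/p}}$.

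More to the point, your reason for giving up on group actions is a misdiagnosis. Your $n=k=4$ example only shows that the cyclic group $\Z/n\times\Z/k$ is too small. The coefficient of $X_\lambda$ counts the permutations of type $\lambda$ that do not preserve $P$ and $Q$, and this set is stable under conjugation by the whole group $H=S_P\times S_Q$. Every $H$-orbit already has size divisible by $p^{a+b-1}$, so no cancellation between orbits is needed.

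The missing step is a centralizer bound. Let $M$ be the union of the mixed cycles of $\sigma$, with $i=|M\cap P|$ and $j=|M\cap Q|$. An element of $Z_H(\sigma)$ permutes the mixed cycles and is determined on each of them by the image of one point. So its restriction to $M$ is determined by its action on $M\cap P$, and also by its action on $M\cap Q$. Hence $v_p(|Z_H(\sigma)|)\le v_p((n-i)!)+v_p((k-j)!)+\min\{v_p(i!),v_p(j!)\}$. If, say, $v_p(i!)\le v_p(j!)$, the orbit size has valuation at least $v_p\binom ni+v_p((k)_j)\ge\bigl(a-v_p(i)\bigr)+\bigl(b+v_p((j-1)!)\bigr)$. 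A short Legendre-formula argument shows $v_p(i)\le v_p((j-1)!)+1$ under that hypothesis, which gives $a+b-1$. In your example, the centralizer of the alternating $8$-cycle in $S_4\times S_4$ consists of its even powers, so its orbit has size $576/4=144$, divisible by $2^4$.

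Your sharpness candidates also fail in basic cases, so the witnesses must be chosen differently. If $p\nmid nk$, both candidates reduce to $\lambda=(1^N)$, whose coefficient is $0$. For $n=9$, $k=1$, $p=3$ they give $\lambda=(9,1)$, whose coefficient $10!/9-8!=9!$ has $3$-adic valuation $4$, not $2$. For $n=k=p$ odd, $(p,p)$ gives $((p-1)!)^2\bigl(\binom{2p-1}{p-1}-1\bigr)$, which is divisible by $p^2$ by Babbage's congruence ($40-4=36$ for $p=3$). Meanwhile $(p^2,\dots)$ is not a partition of $2p$.

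Two witnesses do work. The coefficient of $X_1^{N-2}X_2$ is $\binom N2-\binom n2-\binom k2=nk$, which settles every prime not dividing $\gcd(n,k)$. For $p\mid\gcd(n,k)$, use the coefficient of $X_{2p}X_1^{N-2p}$, namely $\frac1{2p}\sum_{i=1}^{2p-1}\binom{2p}{i}(n)_i(k)_{2p-i}$. For odd $p$ the central term $i=p$ has valuation exactly $a+b-1$, while $p\mid\binom{2p}{i}$ puts every other term at valuation at least $a+b$. For $p=2$ the outer terms gain from $v_2((k)_3)\ge b+1$.
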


\begin{example}[The modulus $\operatorname{lcm}(n,k)$ is not always optimal]\label{ex:lcm}
For $n=k=2$ one finds $C_4-C_2^2=4X_1^2X_2+2X_2^2+8X_1X_3+6X_4$, whose content is $2=\operatorname{lcm}(2,2)$, so here $\operatorname{lcm}(n,k)$ cannot be improved.  The number $nk/\operatorname{rad}(\gcd(n,k))$ equals $\operatorname{lcm}(n,k)$ when $\gcd(n,k)$ has no repeated prime factor (for instance when $n$ and $k$ are coprime), and is larger otherwise, as for $n=k=4$, where it is $8$ and not $4$.  Thus the true modulus depends on the repeated prime factors of $\gcd(n,k)$, and this is what the lifting theorem below exploits.
\end{example}

\begin{remark}
For integer weights alone a slightly larger modulus can occasionally hold, since a polynomial can vanish modulo $m$ at every integer point without having all coefficients divisible by $m$ (Remark~\ref{rem:poly-vs-integer}).  The right reading of the theorem is that $nk/\operatorname{rad}(\gcd(n,k))$ is the exact modulus for the universal polynomial.
\end{remark}

\subsection{Lifting at every prime}

The shift congruence gives $A(pk)\equiv A(k)^p\pmod k$.  When $p$ divides $k$ this holds to a much higher power of $p$, much as $x\mapsto x^p$ is the Frobenius map in characteristic $p$.

\begin{theorem}[Lifting at every prime, see Theorem~\ref{thm:strong-all-primes}]\label{thm:intro-lift}
Let $p$ be a prime, let $p\mid k$, and put $s\coloneqq v_p(k)$.  Then every coefficient of $C_{pk}-C_k^{\,p}$ is divisible by $p^{2s}$ if $p$ is odd, and by $p^{2s-1}$ if $p=2$.  In particular $A_w(pk)\equiv A_w(k)^p$ modulo these powers of $p$, for all integer weights.  As statements about the polynomials, these powers are attained for every $k$ and every prime (Remark~\ref{rem:lift-sharp}).
\end{theorem}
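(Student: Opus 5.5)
The plan is to work coefficientwise.  For each partition $\lambda$ of $pk$ we compare $[X_\lambda]C_{pk}=(pk)!/z_\lambda$, where $z_\lambda=\prod_j j^{m_j}m_j!$ (Proposition~\ref{prop:cycle-index}), with $[X_\lambda]C_k^{\,p}$.  The latter is a sum over ordered $p$-tuples $(\mu^{(1)},\dots,\mu^{(p)})$ of partitions of $k$ whose union is $\lambda$, of $\prod_i k!/z_{\mu^{(i)}}$.  Combinatorially, $C_k^{\,p}$ counts permutations of $pk$ points that preserve a fixed splitting into $p$ blocks $B_1,\dots,B_p$ of size $k$.  $C_{pk}$ counts all permutations.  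So the difference is the weighted count of permutations having some cycle that meets two blocks.

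Following the heuristic behind Theorem~\ref{thm:universal}, I will let a group act on these block-crossing permutations and preserve cycle type.  The group is $G=(\Z/k)^p\rtimes\Z/p$: independent rotations of each block along a fixed $k$-cycle, together with the cyclic shift of the blocks.  I then estimate the $p$-part of every orbit size.

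The key steps are as follows.
\begin{enumerate}
\item Show that a permutation with a cycle meeting blocks $B_i$ and $B_j$ has trivial stabilizer in the factor rotating $B_i$ relative to $B_j$.  This should make the $p$-part of its orbit under $(\Z/k)^p$ at least $p^{s}$.
\item Extract a second factor $p^{s}$ by refining the argument.  One source is the further rotations when a cycle meets at least three blocks.  The other is the shift by $\Z/p$, or the cyclic rotation inside each crossing cycle, when it meets exactly two.
\item On the block-preserving side, the $\Z/p$ shift pairs off non-diagonal $p$-tuples in orbits of size $p$.  The diagonal tuples $\mu^{(1)}=\dots=\mu^{(p)}$ remain, and these contribute $(k!/z_\mu)^p$.
\item Check directly, via Legendre's formula for $v_p$ of factorials, that $(pk)!/z_\lambda\equiv(k!/z_\mu)^p\pmod{p^{2s}}$ when $m_j(\lambda)=p\,m_j(\mu)$.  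This is a Frobenius-type congruence of the kind $\binom{pa}{pb}\equiv\binom ab\pmod{p^{2}}$ (Jacobsthal/Ljunggren style, with the improved exponent for $p\ge5$ not needed).
\end{enumerate}
Summing these pieces gives divisibility of every coefficient of $C_{pk}-C_k^{\,p}$ by $p^{2s}$.

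As a sanity check, consider terms such as $X_{pk}$, which occur only in $C_{pk}$.  Their coefficient is $(pk-1)!$, and $v_p((pk-1)!)=v_p((pk)!)-s-1$ is far larger than $2s$.  So the real content lies in the near-diagonal terms.

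The main obstacle is step~2 together with the non-free orbits of the $\Z/p$ shift.  Permutations fixed by a nontrivial element of $G$ have orbits smaller than the naive bound, and their contribution must be matched with the diagonal terms from step~3.  My first approach is to write $|{\rm orbit}|=|G|/|{\rm Stab}|$ and classify stabilizers.  A stabilizer element of order $p$ must permute the blocks cyclically, or rotate a block by a multiple of $k/p$.  In either case the permutation is determined by a "$p$-th root" structure, and I would bound the $p$-part of the stabilizer by $p$ rather than $p^{s}$.

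The exceptional loss of one factor of $p$ at $p=2$ should appear exactly here.  For $p=2$ the Jacobsthal-type congruence $\binom{2a}{2b}\equiv\binom ab$ holds only modulo $2^{1}\cdot(\text{stuff})$ rather than $4$, because of the sign $(-1)^{p-1}$.  This explains the exponent $2s-1$.

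Sharpness (Remark~\ref{rem:lift-sharp}) I would verify separately by exhibiting one monomial.  The natural candidate is $X_{p^{s}}^{\,\cdot}$-type terms, e.g.\ the coefficient of $X_1^{pk-p}X_p$ or $X_{k}^{p-1}X_{2k}$, whose valuation can be computed exactly from Legendre's formula.
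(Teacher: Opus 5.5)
Your step~2 fails once $s\ge2$. The group $G=(\Z/k)^p\rtimes\Z/p$ is too small to force $p^{2s}$ on every orbit of block-crossing permutations.

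Take $p=3$, $k=9$, and let $\sigma$ swap $(1,x)\leftrightarrow(2,x)$ for every $x$ and fix $B_3$ pointwise. Conjugating by $(\rho_1,\rho_2,\rho_3)$ only changes the offset $\rho_2-\rho_1$, and the block shift gives three pairs of blocks. So the $G$-orbit of $\sigma$ has $27=3^3$ elements, which is not a multiple of $3^{2s}=81$.

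In general this orbit has $p$-part $p^{s+1}$ (only $2^s$ for $p=2$). Rotations give just the factor $k$ on a piece joining two blocks; this is the case $j=2$ of Lemma~\ref{lem:connected-divisible}. The shift gives at most one more $p$. A rotation inside a crossing cycle commutes with $\sigma$, so it enlarges stabilizers, not orbits.

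Steps~3--4 do not repair this, because the congruence in step~4 is false. For $p=3$, $k=9$, $\mu=(2,1^7)$, $\lambda=(2^3,1^{21})$ one has $[X_\lambda]C_{27}=4440150$ with $v_3=3$, while $(k!/z_\mu)^3=36^3$ has $v_3=6$. The theorem holds there only because a non-diagonal orbit cancels the difference: the three tuples putting all three transpositions into one block contribute $3\cdot1260$, also of valuation $3$. So the size-$p$ orbits of non-diagonal tuples cannot be discarded.

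The missing idea is to treat two-block interactions with a much larger group, and to get the last factor of $p$ from the choice of the pair. The paper cuts a permutation along the connected components of its block graph, giving $C_{pk}=\sum_\pi\prod_{B\in\pi}\mathrm{Conn}_{|B|}(k)$ over set partitions $\pi$ of the $p$ blocks.
\begin{itemize}
\item Components with $j\ge3$ blocks are handled by rotations as you intended: $k^{j-1}$ divides $\mathrm{Conn}_j(k)$, and $k^2$ suffices.
\item A two-block component is $\mathrm{Conn}_2(k)=C_{2k}-C_k^2$, whose content is $k^2/\operatorname{rad}(k)$ by Theorem~\ref{thm:optimal-divisor}. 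That bound uses conjugation by all of $S_k\times S_k$ together with the factorial inequality of Lemma~\ref{lem:mixed-support-factorial}, and it gives $p^{2s-1}$.
\item Partitions with two or more pairs are then divisible by $p^{4s-2}$.
\item Partitions with exactly one pair sum to $\binom p2\,\mathrm{Conn}_2(k)\,C_k^{p-2}$.
\end{itemize}
The factor $p$ in $\binom p2$ gives $p^{2s}$ for odd $p$, and $\binom22=1$ is the whole reason for $2s-1$ at $p=2$. Your Jacobsthal explanation of the $p=2$ loss is off, since $\binom{2a}{2b}\equiv\binom ab\pmod 4$ does hold.

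For sharpness, $X_1^{pk-p}X_p$ is not a witness: for $p=k=3$ its coefficient is $168-6=162$, of valuation $4$. Your other candidate $X_k^{p-1}X_{2k}$ indexes a partition of $(p+1)k$, not $pk$. The paper uses $X_p^{k-2}X_{2p}$ for odd $p$ (Proposition~\ref{prop:lift-sharp-odd}), and the witness $X_4X_1^{2k-4}$ of Proposition~\ref{prop:prime-witness} for $p=2$.
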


\begin{example}[Involutions]\label{ex:lift}
For $p=k=3$ we have $A(9)-A(3)^3=2620-64=2556=3^2\cdot284$, divisible by $9$, whereas the shift congruence alone gives only divisibility by $3$.  For $p=2$ and $k=4$ (so $s=2$) we have $A(8)-A(4)^2=764-100=664=2^3\cdot83$, divisible by $2^{2s-1}=8$, whereas the shift congruence alone gives only divisibility by $4$.  In both cases the lift gains a factor of $p$ over the shift congruence.
\end{example}

Iterating the lift computes $A_w(p^r)$ modulo $p^r$ in closed form, since modulo $p^r$ the number $A_w(p^r)$ is the $p^{r-1}$-th power of the single number $w_1+(p-1)w_p$.

\begin{theorem}[Closed formulas modulo prime powers, see Section~\ref{sec:odd}]\label{thm:intro-closed}
Let $w$ be an integer weight sequence and $r\ge1$.
\begin{enumerate}[label=(\alph*)]
\item If $p$ is an odd prime, then $A_w(p^r)\equiv\bigl(w_1+(p-1)w_p\bigr)^{p^{r-1}}\pmod{p^r}$.
\item For $p=2$ the same formula, $A_w(2^r)\equiv(w_1+w_2)^{2^{r-1}}\pmod{2^r}$, holds for $r=1$ and for every $r\ge4$.  The prime $2$ is exceptional at two levels only, $r=2$ and $r=3$, where the weight $w_4$ also enters.
\item Consequently, for $r\ge4$ the residue of $A_w(2^r)$ modulo $2^r$ is $1$ if $w_1+w_2$ is odd and $0$ if it is even.
\end{enumerate}
\end{theorem}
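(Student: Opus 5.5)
Throughout, I take Theorem~\ref{thm:universal} and Theorem~\ref{thm:intro-lift} as given.

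\textbf{Base case $r=1$.} I would use the cycle-index formula of Proposition~\ref{prop:cycle-index}:
\[
[X_\lambda]C_p=\frac{p!}{\prod_j j^{m_j}m_j!}.
\]
For $\lambda$ different from $1^p$ and $(p)$, every part is less than $p$, so this coefficient is divisible by $p$. The term $1^p$ has coefficient $1$ and $(p)$ has coefficient $(p-1)!$. Hence
\[
C_p\equiv X_1^p+(p-1)!\,X_p\pmod p.
\]
By Wilson's theorem and Fermat's little theorem, $A_w(p)\equiv w_1-w_p\equiv w_1+(p-1)w_p\pmod p$. This settles the case $r=1$ for every prime, including $p=2$, where $C_2=X_1^2+X_2$.

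\textbf{Induction for odd $p$.} Apply Theorem~\ref{thm:intro-lift} with $k=p^{r-1}$, so $s=r-1$. This gives
\[
A_w(p^r)\equiv A_w(p^{r-1})^p\pmod{p^{2(r-1)}}.
\]
For $r\ge2$ we have $2(r-1)\ge r$, so the congruence holds modulo $p^r$. By induction, $A_w(p^{r-1})\equiv a^{p^{r-2}}\pmod{p^{r-1}}$, where $a=w_1+(p-1)w_p$. I would then use the standard fact that $x\equiv y\pmod{p^{j}}$ with $j\ge1$ implies $x^p\equiv y^p\pmod{p^{j+1}}$. This follows from expanding $(y+p^jt)^p$: the middle binomial terms are divisible by $p\cdot p^j$, and the last term by $p^{jp}$. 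With $j=r-1$ it gives $A_w(p^{r-1})^p\equiv a^{p^{r-1}}\pmod{p^r}$, which completes the induction for (a).

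\textbf{The prime $2$.} The lift is only modulo $2^{2s-1}$, so with $k=2^{r-1}$ we get $A_w(2^r)\equiv A_w(2^{r-1})^2\pmod{2^{2r-3}}$. Since $2r-3\ge r$ exactly when $r\ge3$, this does not reach modulus $2^r$ at $r=2$, and the squaring transfer cannot start from a valid step $r=3$. I would therefore not induct from $r=1$. I would compute $C_4$ and $C_8$ modulo $16$ directly from the cycle-index coefficients. The aim is to show
\[
A_w(8)\equiv (w_1+w_2)^4+2^{e}\,(\text{terms involving }w_4)\pmod{8}
\]
and that this discrepancy gets absorbed at the next level.

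Concretely, the plan is to show $A_w(8)\equiv (w_1+w_2)^4\pmod 4$, which should be visible from the $r=2$ and $r=3$ expansions. Then the lift at $r=4$ applies:
\[
A_w(16)\equiv A_w(8)^2\pmod{2^5}.
\]
If $A_w(8)=(w_1+w_2)^4+4t$, then $A_w(8)^2\equiv (w_1+w_2)^8+8t(w_1+w_2)^4\pmod{16}$. This is not yet the target modulo $16$, so more care is needed. Writing $a=w_1+w_2$:

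\emph{When $a$ is even,} $a^4$ is divisible by $16$, and the extra term $8ta^4$ vanishes modulo $16$.

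\emph{When $a$ is odd,} I need $t$ even, i.e.\ $A_w(8)\equiv a^4\pmod 8$ whenever $a$ is odd.

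I would check this by reducing $C_8$ modulo $8$ with parity constraints, and this is the step I expect to be the main obstacle: it is an explicit but delicate finite computation with the $w_4$ terms. For $r\ge5$, the same squaring transfer as in the odd case works, because $2r-3\ge r$ and the induction hypothesis modulo $2^{r-1}$ lifts to modulo $2^r$ under squaring.

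\textbf{Part (c).} For $r\ge4$ and $a$ odd, $a^{2^{r-1}}\equiv1\pmod{2^{r+1}}$ because $a^2\equiv1\pmod 8$, so the residue is $1$. For $a$ even, $2^{r-1}\ge r$, so $a^{2^{r-1}}\equiv0\pmod{2^r}$.
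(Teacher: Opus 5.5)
Your base case, the induction for odd $p$, the reduction at $r=4$, the induction for $r\ge5$ and part (c) are all correct and follow the paper. The gap is at the prime $2$, level $3$. Everything at $r=4$ rests on two facts: $A_w(8)\equiv a^4\pmod 4$ for all integer weights, and $A_w(8)\equiv a^4\pmod 8$ when $a=w_1+w_2$ is odd. You prove neither. The first is asserted to be ``visible''. The second is deferred to a reduction of $C_8$ modulo $8$ that you yourself flag as the main obstacle and never carry out. So part (b) for $r\ge4$, and with it part (c), is not yet established.

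You also do not show that the bare formula actually fails at $r=2,3$, which (b) asserts. One example settles it: take $w_4=1$ and all other weights $0$. Then $A_w(4)=6\equiv2\pmod4$ and $A_w(8)=1260\equiv4\pmod8$, while $a=0$.

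The missing idea is that you never need $C_8$. You already noted that the lift with $k=4$ holds modulo $2^{2\cdot2-1}=8$. What fails at level $2$ is only the input you feed it, not the lift. Replace the bare power by the correct level-$2$ residue, read off from the five-term $C_4$. With $\beta=w_2+w_4$,
\[
A_w(4)-(a^2+2\beta)=w_1^2(w_1^2-1)+2w_2(3w_1^2-w_1+w_2-1)+8w_1w_3+4w_4\equiv0\pmod 4,
\]
since $4\mid w_1^2(w_1^2-1)$ and $3w_1^2-w_1+w_2-1\equiv w_2-1\pmod 2$. Then the lift at $k=4$ and the squaring lemma give
\[
A_w(8)\equiv(a^2+2\beta)^2=a^4+4\beta(a^2+\beta)\pmod 8 .
\]
Both facts you need follow at once. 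Modulo $4$ this is $a^4$. If $a$ is odd, then $\beta(a^2+\beta)\equiv\beta(1+\beta)\equiv0\pmod2$, so the residue is $a^4$ modulo $8$. This is the paper's route. With it, your parity split at $r=4$ becomes the paper's expansion of $(a^4+4a^2\beta+4\beta^2)^2$ modulo $16$.
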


\begin{remark}
For involutions, (a) gives $A(3^r)\equiv1\pmod{3^r}$.  A further consequence (Corollary~\ref{cor:eventual-divisibility}) is the cleanest criterion of the paper, namely that $A_w(n)$ is divisible by $p^e$ for all large $n$ if and only if $p$ divides $w_1+(p-1)w_p$.
\end{remark}

\subsection{Lagrange-type families}

A second family of sequences does not obviously count weighted permutations, yet turns out to.  It is defined by Lagrange inversion, by taking the $n$-th power of a series.

\begin{theorem}[Lagrange-type families, see Theorem~\ref{thm:lagrange}]\label{thm:intro-lagrange}
Let $F,G\in\Z[[x]]$ be power series with integer coefficients, and put $b(n)\coloneqq n![x^n]\,e^{xG(x)}F(x)^n$.  Then $b(n)=A_w(n)$ for an integer weight sequence $w$, so all the results above apply.  Moreover $b(k)\equiv(F(0)G(0))^k\pmod k$.  In particular, if $F(0)=G(0)=1$ then $b(n+k)\equiv b(n)\pmod k$ for all $n\ge0$ and $k\ge1$.
\end{theorem}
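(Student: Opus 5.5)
The plan has two parts. First, compute the exponential generating function of $b$ in closed form by Lagrange inversion and check that its logarithmic coordinates are integers. Second, prove the congruence for $b(k)$ by expanding the exponential directly.

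\emph{Step 1 (the e.g.f.).} Write $B(t)\coloneqq\sum_{n\ge0}b(n)t^n/n!=\sum_{n}t^n[x^n]\,e^{xG(x)}F(x)^n$. By the Lagrange--B\"urmann formula in the form $\sum_n t^n[x^n]H(x)F(x)^n=H(y)/(1-tF'(y))$, where $y=y(t)\in\Z[[t]]$ is the unique series with $y=tF(y)$, we get
\[
 B(t)=\frac{e^{y G(y)}}{1-tF'(y)}.
\]
This series has constant term $1$. By the discussion of logarithmic coordinates, $B=\exp\bigl(\sum_m u_m t^m/m\bigr)$, and $t\,\frac{d}{dt}\log B=\sum_m u_m t^m$. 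So it suffices to show that $t\,\frac{d}{dt}\log B\in\Z[[t]]$. Setting $w_m\coloneqq u_m$ then gives $b(n)=A_w(n)$ by Proposition~\ref{prop:cycle-index}.

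\emph{Step 2 (integrality of the logarithmic derivative).} Differentiating $y=tF(y)$ gives $t y'=y/(1-tF'(y))$. This lies in $\Z[[t]]$ because $1-tF'(y)$ has constant term $1$ and integer coefficients. The logarithm has two parts. For the first,
\[
 t\tfrac{d}{dt}\bigl(yG(y)\bigr)=\bigl(G(y)+yG'(y)\bigr)\,t y'\in\Z[[t]].
\]
For the second,
\[
 t\tfrac{d}{dt}\bigl(-\log(1-tF'(y))\bigr)=\frac{tF'(y)+tF''(y)\,t y'}{1-tF'(y)},
\]
where $F''$ is the formal derivative. This is again in $\Z[[t]]$. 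Hence every $u_m$ is an integer.

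This step is the only real obstacle: one must arrange the Lagrange inversion so that only $t\,d/dt$ of the logarithm appears and no division by $m$ is needed. The alternative of applying $\log$ termwise would introduce denominators.

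\emph{Step 3 (the value $b(k)$ mod $k$).} Expanding $e^{xG}=\sum_{j\ge0}x^jG(x)^j/j!$ gives
\[
 b(k)=\sum_{j=0}^{k}\frac{k!}{j!}\,[x^{k-j}]\,G^jF^k .
\]
Each coefficient $[x^{k-j}]G^jF^k$ is an integer. For $j<k$ the factor $k!/j!=k(k-1)\cdots(j+1)$ is divisible by $k$. The term $j=k$ is $[x^0]G^kF^k=(F(0)G(0))^k$. Hence $b(k)\equiv(F(0)G(0))^k\pmod k$.

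Finally, if $F(0)=G(0)=1$, then $b(k)\equiv1\pmod k$. Theorem~\ref{thm:universal}, which applies by Step 2, then gives $b(n+k)\equiv b(n)b(k)\equiv b(n)\pmod k$ for $n\ge1$. The case $n=0$ is immediate because $b(0)=1$.
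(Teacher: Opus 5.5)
Your proof is correct and follows the paper's route: you use the second form of Lagrange inversion with $H=e^{xG}$, prove integrality of $t\,\frac{d}{dt}\log B$ by splitting it into the $yG(y)$ part and the $-\log(1-tF'(y))$ part, and expand $b(k)$ so that every term with $j<k$ carries the factor $k!/j!$, which is divisible by $k$. Your single-sum expansion $b(k)=\sum_{j}\frac{k!}{j!}[x^{k-j}]G^jF^k$ is a slightly more direct version of the paper's two-stage expansion through $\eta_i=i![x^i]e^{xG}$. The paper also remarks explicitly that the inversion formula still holds in the degenerate case $F(0)=0$, where $y=0$; you use the formula without noting this, but it does hold there.
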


\begin{remark}
From computer data Bala conjectured the last statement, and the theorem proves his conjecture for the whole family.  For instance, the OEIS sequence \OEIS{A278070} is the case $F=1/(1-x)$, $G=1$, and a proof of that single case by an automated system is recorded in its entry.
\end{remark}

\subsection{Applications to the OEIS}

The theorems above apply to 101 sequences from the OEIS, and the resulting congruences are collected in a supplementary catalogue, provided as an ancillary file with this paper.  In Section~\ref{sec:sequences} we prove four conjectures recorded in the OEIS, namely Richman's conjecture that $(-1)^ma(m)\equiv(-1)^na(n)\pmod q$ for $m\equiv n\pmod q$ with $q$ odd (\OEIS{A053871}), Hanna's conjecture that $p\mid a(n)$ for $n\ge p$ whenever $p$ is a prime of the form $m^2+1$ (\OEIS{A214724}), van Hoeij's conjecture that $k\mid a(n+k)-a(n)$ (\OEIS{A000459}), and Bala's conjecture that $n\mid a(n+k)$ for $n$ coprime to $6$ and $k\ge0$ (\OEIS{A227937}, partitions into blocks of size $2$ or $3$).

\subsection{Overview of the proofs}\label{sec:overview}

We now briefly discuss the methods of proof.  The difficulty is that each sequence $A_w(n)$ is a sum over $n!$ permutations with no closed form, so a congruence for one sequence typically uses special features of that sequence (its recurrence, or the arithmetic of its e.g.f.).  Roughly speaking, the key point is to prove congruences between the polynomials $C_n$ coefficient by coefficient, and \emph{not} to work with one sequence at a time.  This costs nothing for the shift congruence, but it makes the question of the exact modulus meaningful, and for that question the obvious routes stall.  For instance, a free group action on the permutations that mix two blocks gives divisibility by $k$ and nothing more, and iterating the shift congruence gains only one factor of $k$ when one needs two.

The first ingredient is the falling-factorial recurrence for $C_n$ (Section~\ref{sec:recurrence}), and comparing the recurrences for $C_{n+k+1}$ and $C_{n+1}$ modulo $k$ proves Theorem~\ref{thm:universal} by induction (Section~\ref{sec:universal}).  A second proof lets a rotation of order $k$ act freely on the permutations that mix the two parts (Remark~\ref{rem:group-action}).  The second ingredient is an orbit count for the exact modulus, in which a product of two symmetric groups acts on the mixing permutations by conjugation, an elementary $p$-adic inequality for factorials (Lemma~\ref{lem:mixed-support-factorial}) bounds the centralizers, and one explicit coefficient shows that nothing larger works (Section~\ref{sec:content}).  The third ingredient, and the main difficulty, is the lift.  Cutting a permutation of $p$ blocks of $k$ points into connected pieces, the pieces touching three or more blocks are divisible by $k^2$, so it remains to control the pieces touching exactly two blocks.  These are measured exactly by Theorem~\ref{thm:intro-best-modulus}, and the binomial coefficient $\binom p2$ explains the difference between odd primes and $2$ (Section~\ref{sec:lift}).  The fourth ingredient is an induction on $r$ using that $x\equiv y\pmod{p^r}$ implies $x^p\equiv y^p\pmod{p^{r+1}}$, which gives the closed formulas (Section~\ref{sec:odd}).  The last ingredient is Lagrange inversion, which makes the logarithm of the generating function of the Lagrange families explicit, with integer coefficients (Section~\ref{sec:lagrange}). 

\subsection{Relation to earlier work}\label{sec:relation}

Bala~\cite{Bala2017} proved in 2017 that $a(n+k)\equiv a(n)\pmod k$ when the e.g.f.\ is $F(x)\exp(xG(x))$ with $F,G\in\Z[[x]]$ and $G(0)=1$.  With $F(0)=1$, his class lies inside ours (Section~\ref{sec:a-k}).

The product congruence modulo $k$ was first proved in~\cite[Proposition~2.2]{KallatBala} (17 July 2026) for \OEIS{A028342}, by letting $\Z/k$ act on colored permutations.  The argument does not depend on the divisor structure and applies to arbitrary integer weights.  Bala~\cite{Bala2026} (31 July 2026) subsequently recorded the general-weight congruence, citing~\cite{KallatBala}, and gave a proof by induction on the recurrence.

The present paper proves the congruence in the universal polynomial form and with the modulus $\operatorname{lcm}(n,k)$ (Theorem~\ref{thm:universal}), by both the group-action argument and the recurrence.  What goes beyond~\cite{KallatBala} and~\cite{Bala2026} is the exact content $nk/\operatorname{rad}(\gcd(n,k))$ with its sharpness, the lift at every prime and its sharpness, the closed formulas, the Lagrange family, and the applications of Section~\ref{sec:sequences}.

Bellagh and Oulebsir~\cite{BellaghOulebsir2023} give a congruence from which the residue of $A_w(p^r)$ modulo $p^r$ follows for odd $p$ (Remark~\ref{rem:BO}).  Congruences for Bell numbers and polynomials~\cite{Carlitz1961,GertschRobert1996,Mihoubi2009,SunZagier2011,SunWuZhuang2013} concern one sequence at a time, whereas ours hold for all weights at once.  To our knowledge, the exact content formula, the lift at every $k$ and every prime together with its sharpness, the $2$-adic closed form, and the integrality of the logarithmic weights of the Lagrange family have not appeared before.  The provenance of the OEIS conjectures treated in this paper is discussed in Section~\ref{sec:provenance}.

\section{Weighted permutations, the cycle index and logarithmic coordinates}\label{sec:cycle}

In this section we recall the numbers $A_w(n)$ of the Introduction and prove the cycle-index formula, Proposition~\ref{prop:cycle-index}, on which every later section depends.  The \emph{universal} version $C_n$, in which the weights are unknowns, lets us prove a congruence for all weights at once.

Throughout, $w=(w_1,w_2,\ldots)$ is a sequence of integers, $A_w(x)=\exp\bigl(\sum_{m\ge1}w_mx^m/m\bigr)=\sum_{n\ge0}A_w(n)x^n/n!$, and $C_n(X_1,\ldots,X_n)$ is $A_w(n)$ with $w_j$ replaced by $X_j$ (these are the \emph{cycle index polynomials}, and $C_0=1$).  For a partition $\lambda=(1^{m_1}2^{m_2}\cdots)$ of $n$ (that is, $m_j$ cycles of length $j$) set
\[
 z_\lambda\coloneqq\prod_{j\ge1}j^{m_j}m_j!.
\]

\begin{proposition}[Cycle-index formula]\label{prop:cycle-index}
For every $n\ge0$,
\begin{equation}\label{eq:cycle-index}
 A_w(n)=\sum_{\lambda\vdash n}\frac{n!}{z_\lambda}\prod_jw_j^{m_j}.
\end{equation}
In particular, $A_w(n)\in\Z$, and $A_w(n)$ depends only on $w_1,\ldots,w_n$.
\end{proposition}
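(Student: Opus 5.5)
The plan is to group the $n!$ permutations of $\{1,\ldots,n\}$ by cycle type and count each class exactly. By definition $A_w(n)=\sum_{\sigma\in S_n}\prod_{c}w_{|c|}$, the product running over the cycles $c$ of $\sigma$. The weight of $\sigma$ depends only on its cycle type $\lambda=(1^{m_1}2^{m_2}\cdots)$, where it equals $\prod_jw_j^{m_j}$. Hence
\[
A_w(n)=\sum_{\lambda\vdash n}N_\lambda\prod_jw_j^{m_j},
\]
where $N_\lambda$ is the number of permutations of cycle type $\lambda$. It remains to show $N_\lambda=n!/z_\lambda$.

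For this count, I would write down the $n$ symbols in some order and cut the list into consecutive blocks: first $m_1$ blocks of length $1$, then $m_2$ blocks of length $2$, and so on. Reading each block as a cycle gives a surjection from the $n!$ words onto the permutations of type $\lambda$. Every fibre has exactly $z_\lambda$ elements. Each cycle of length $j$ can be written starting at any of its $j$ elements, which accounts for $\prod_j j^{m_j}$. The $m_j$ cycles of equal length $j$ can be listed in any order, which accounts for $\prod_j m_j!$. Equivalently, $N_\lambda=|S_n|/|Z(\sigma)|$ by orbit--stabilizer for the conjugation action, and the centralizer of $\sigma$ has order $z_\lambda$, since it is $\prod_j (\Z/j)\wr S_{m_j}$. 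This gives \eqref{eq:cycle-index}.

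The integrality claim follows at once, because $N_\lambda=n!/z_\lambda$ counts a set and is therefore a nonnegative integer, and the $w_j$ are integers. The dependence on $w_1,\ldots,w_n$ alone holds because a partition of $n$ has no part exceeding $n$.

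The paper also uses the exponential generating function identity, so I would verify it as a consistency check. Expanding
\[
\exp\Bigl(\sum_m w_mx^m/m\Bigr)=\prod_m\sum_{m_j\ge0}\frac{(w_jx^j/j)^{m_j}}{m_j!}
\]
and reading off the coefficient of $x^n$ gives $\sum_{\lambda\vdash n}\prod_j w_j^{m_j}/(j^{m_j}m_j!)$. Multiplying by $n!$ recovers \eqref{eq:cycle-index}.

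The only point needing care is that each fibre of the block-cutting map has size exactly $z_\lambda$. This means checking that the cyclic rotations within a block and the permutations of equal-length blocks together act freely and transitively on each fibre, which is routine.
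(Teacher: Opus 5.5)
Your proof is correct and uses the same two ingredients as the paper, the expansion $\exp\bigl(\sum_j w_jx^j/j\bigr)=\prod_j\sum_{m_j\ge0}(w_jx^j/j)^{m_j}/m_j!$ and the identification of $n!/z_\lambda$ with the number of permutations of cycle type $\lambda$, but in the opposite order. The paper takes the e.g.f.\ as the definition of $A_w(n)$ (as in Section~\ref{sec:cycle}), so its main step is your ``consistency check'' of extracting the coefficient of $x^n$, and it only quotes the class-size fact to get integrality and the link to the combinatorial definition; you instead start from the combinatorial definition and actually prove the class size, by the block-cutting count or equivalently the centralizer order $z_\lambda$.
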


\begin{proof}
Expand each exponential factor:
\[
 A_w(x)=\prod_{j\ge1}\exp\!\left(w_j\frac{x^j}{j}\right)
 =\prod_{j\ge1}\sum_{m_j\ge0}\frac{w_j^{m_j}x^{jm_j}}{j^{m_j}m_j!}.
\]
Extracting the coefficient of $x^n$ and multiplying by $n!$ gives \eqref{eq:cycle-index}.  The quotient $n!/z_\lambda$ is the number of permutations of cycle type $\lambda$, hence is an integer.  So \eqref{eq:cycle-index} says that $A_w(n)=\sum_{\sigma\in S_n}\prod_{c}w_{|c|}$, the product being over the cycles $c$ of $\sigma$, which links the e.g.f.\ with the combinatorial definition in the Introduction.
\end{proof}

\begin{example}[The first universal polynomials]\label{ex:first-C}
Applying the cycle-index formula for $n\le4$ gives
\begin{align*}
 C_1&=X_1, & C_2&=X_1^2+X_2,\\
 C_3&=X_1^3+3X_1X_2+2X_3, & C_4&=X_1^4+6X_1^2X_2+3X_2^2+8X_1X_3+6X_4.
\end{align*}
Here the coefficient of $X_1^2X_2$ in $C_4$ is $6$ because there are $\binom42=6$ permutations of $\{1,2,3,4\}$ with one $2$-cycle and two fixed points.  For involutions ($X_1=X_2=1$, $X_j=0$ for $j\ge3$) we get $C_4=1+6+3=10$, matching the table in the Introduction.
\end{example}

\subsection{Logarithmic coordinates}\label{sec:coordinates-intro}

The proofs rest on the more general fact that \emph{every} power series with constant term $1$, not only those coming from integer weights, can be written as $\exp\bigl(\sum u_mx^m/m\bigr)$ for a unique sequence of rational numbers $u_m$.  The series coming from integer weights are exactly those whose coordinates are integers.  This is the reason why integrality of the weights is the right hypothesis, and why the polynomials $C_n$ are \emph{universal}, because they compute the coefficients of every such series from its coordinates.

\begin{definition}[Logarithmic coordinates]\label{def:canonical-coordinates}
For a series $A(x)\in1+x\Q[[x]]$ put $u_m(A)\coloneqq m[x^m]\log A(x)$ for $m\ge1$, so that $A(x)=\exp\bigl(\sum_{m\ge1}u_m(A)x^m/m\bigr)$.
\end{definition}

\begin{proposition}[Classification and universal coefficients]\label{thm:canonical-classification}\label{thm:all-egf-coordinates}
The maps $u\mapsto\exp\bigl(\sum_{m\ge1}u_mx^m/m\bigr)$ and $A\mapsto(u_m(A))_{m\ge1}$ are inverse bijections between the sequences of rational numbers and $1+x\Q[[x]]$, so multiplication of series corresponds to addition of coordinates and the series with integer coordinates are exactly those coming from integer weights.  Moreover, if $A(x)=\sum_na_nx^n/n!$ and $u_m=u_m(A)$, then $a_n=C_n(u_1,\ldots,u_n)$ for every $n\ge0$.
\end{proposition}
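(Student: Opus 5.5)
We prove the three assertions of Proposition~\ref{thm:canonical-classification} in turn.

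\emph{Bijection.} We use the formal logarithm and exponential on the sets $x\Q[[x]]$ and $1+x\Q[[x]]$. Since $\Q$ has characteristic $0$, these are mutually inverse bijections. They turn addition into multiplication, because $\exp(S+T)=\exp(S)\exp(T)$ holds as an identity of formal power series.

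A sequence $u=(u_m)_{m\ge1}$ of rationals is the same data as the series $S_u=\sum_m u_mx^m/m\in x\Q[[x]]$. Conversely, $S\in x\Q[[x]]$ recovers $u_m=m[x^m]S$. So $u\mapsto S_u$ is a bijection, and it is additive.

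Composing the two bijections, $u\mapsto\exp(S_u)$ is a bijection onto $1+x\Q[[x]]$. Its inverse is $A\mapsto\bigl(m[x^m]\log A\bigr)_{m\ge1}=(u_m(A))_{m\ge1}$, which is exactly Definition~\ref{def:canonical-coordinates}. The same composition shows $u_m(AB)=u_m(A)+u_m(B)$.

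\emph{Universal coefficients.} We must show $a_n=C_n(u_1,\ldots,u_n)$. The expansion in the proof of Proposition~\ref{prop:cycle-index} never used integrality of the weights. Each step is an identity in $\Q[X_1,X_2,\ldots][[x]]$:
\[
 \exp\Bigl(\sum_{m\ge1}X_m\frac{x^m}{m}\Bigr)=\sum_{n\ge0}C_n(X_1,\ldots,X_n)\frac{x^n}{n!}.
\]
Only $X_1,\ldots,X_n$ enter the coefficient of $x^n$, so we may apply the ring homomorphism $X_m\mapsto u_m(A)$ coefficientwise. Exponentiation of series with zero constant term commutes with such substitutions, since each coefficient of $\exp$ is a finite polynomial expression. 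The left side therefore becomes $\exp(S_u)=A(x)$, and comparing coefficients of $x^n/n!$ gives $a_n=C_n(u_1,\ldots,u_n)$.

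\emph{Integer coordinates.} By definition, a series ``comes from integer weights'' when it is $A_w(x)$ for some integer sequence $w$. By the bijection, $A=A_w$ holds exactly when $u(A)=w$. Hence the series coming from integer weights are precisely those whose coordinates are all integers.

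The only point needing care is justifying that substitution commutes with $\exp$ in the universal-coefficient step. We handle it by truncating modulo $x^{n+1}$: there everything is a finite polynomial identity, and substitution is a ring homomorphism. No step is a genuine obstacle.
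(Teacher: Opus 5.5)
Your proof is correct and follows the same route as the paper. The paper's proof uses the formal $\log/\exp$ bijection between $x\Q[[x]]$ and $1+x\Q[[x]]$, together with the observation that the expansion in Proposition~\ref{prop:cycle-index} needs no integrality; you simply spell out the substitution homomorphism $X_m\mapsto u_m(A)$ and the reading of ``comes from integer weights'' as $u(A)=w$, which the paper leaves implicit.
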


\begin{proof}
The formal logarithm and exponential are inverse bijections between $1+x\Q[[x]]$ and $x\Q[[x]]$, and the logarithm turns products into sums.  The formula for $a_n$ is the expansion of Proposition~\ref{prop:cycle-index}, which needs no integrality.
\end{proof}

The polynomials $C_n$ thus encode every series with constant term $1$, and we next find the recurrence they satisfy.

\section{The falling-factorial recurrence}\label{sec:recurrence}

In this section we prove Proposition~\ref{prop:recurrence}, the recurrence from which Theorem~\ref{thm:universal} is deduced in the next section.  To prove a statement about all the numbers $A_w(n)$ at once, we need a way to build $A_w(n+1)$ out of the earlier values using only addition and multiplication.  Then a congruence can be proved by induction on $n$.  Such a recurrence comes from differentiating the generating function.\footnote{The recurrence is the differential equation $F'=F\cdot\sum_jX_jx^{j-1}$ for $F=\exp(\sum_jX_jx^j/j)$, written in coefficients.}  Informally, the idea is to look at the cycle that contains the last label, $n+1$.  Suppose this cycle has length $r+1$.  We choose and order the other $r$ labels on it (the labels that follow $n+1$ along the cycle) in $n(n-1)\cdots(n-r+1)$ ways, give the cycle the weight $w_{r+1}$, and then place any weighted permutation on the remaining $n-r$ labels.  Summing over $r$ gives the recurrence below.

Write $\fall{n}{r}=n(n-1)\cdots(n-r+1)$, with $\fall{n}{0}=1$, for the falling factorial.

\begin{proposition}[Fundamental recurrence]\label{prop:recurrence}
For every $n\ge0$, the universal polynomials satisfy
\begin{equation}\label{eq:recurrence}
 C_{n+1}=\sum_{r=0}^{n}\fall{n}{r}\,X_{r+1}C_{n-r}.
\end{equation}
Consequently, for every integral weight sequence $w$,
\[
 A_w(n+1)=\sum_{r=0}^{n}\fall{n}{r}\,w_{r+1}A_w(n-r).
\]
\end{proposition}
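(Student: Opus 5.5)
The plan is to prove \eqref{eq:recurrence} as an identity of formal power series, by differentiating the generating function, and then to specialize to integer weights.

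Put $F(x)\coloneqq\exp\bigl(\sum_{j\ge1}X_jx^j/j\bigr)=\sum_{n\ge0}C_nx^n/n!$ in $\Q[X_1,X_2,\ldots][[x]]$; this expansion holds by Proposition~\ref{prop:cycle-index}, read with indeterminate weights.  Formal differentiation gives $F'=F\cdot L$ with $L\coloneqq\sum_{j\ge1}X_jx^{j-1}$, because the derivative of $\sum_jX_jx^j/j$ is $L$.  The left side has coefficient $[x^n]F'=C_{n+1}/n!$.  On the right, the coefficient of $x^n$ in $F L$ is
\[
 \sum_{r=0}^{n}\frac{C_{n-r}}{(n-r)!}X_{r+1}.
\]
Equating these two coefficients and multiplying by $n!$ uses $n!/(n-r)!=\fall{n}{r}$, and this gives exactly \eqref{eq:recurrence}.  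The identity holds in $\Q[X_1,X_2,\ldots]$, and since every $C_m$ has integer coefficients it is an identity in $\Z[X_1,X_2,\ldots]$.

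The statement about $A_w$ then follows by substituting $X_j=w_j$, since $A_w(m)=C_m(w_1,\ldots,w_m)$ by definition.

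As a sanity check, there is also a bijective proof, given by the combinatorial description in the text.  One looks at the cycle containing $n+1$ and supposes it has length $r+1$.  The $r$ labels that follow $n+1$ on this cycle form an ordered choice from $\{1,\ldots,n\}$, which can be made in $\fall{n}{r}$ ways, and the cycle contributes the weight $X_{r+1}$.  The remaining $n-r$ points carry an arbitrary permutation, and these together contribute $C_{n-r}$.  Summing over $r$ gives \eqref{eq:recurrence}.

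No step is a real obstacle.  The only point needing care is the coefficient bookkeeping with the factorials, so I would present the generating-function argument as the proof.
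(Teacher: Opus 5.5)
Your proof is correct and is essentially the paper's own: differentiate $\exp\bigl(\sum_j X_jx^j/j\bigr)$, compare coefficients of $x^n$ in $F'=F\sum_j X_jx^{j-1}$, multiply by $n!$, and then substitute $X_j=w_j$. Your combinatorial check, which looks at the cycle containing $n+1$, matches the informal explanation the paper gives just before the proposition.
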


\begin{proof}
Differentiating
\[
 \exp\!\left(\sum_{j\ge1}X_j\frac{x^j}{j}\right)
\]
gives the identity
\[
 \frac{d}{dx}\exp\!\left(\sum_{j\ge1}X_j\frac{x^j}{j}\right)
 =\exp\!\left(\sum_{j\ge1}X_j\frac{x^j}{j}\right)
 \sum_{j\ge1}X_jx^{j-1}.
\]
Extracting the coefficient of $x^n$ and multiplying by $n!$ yields
\eqref{eq:recurrence}.  The formula for $A_w$ follows by setting $X_j=w_j$.
\end{proof}

\begin{example}
For involutions ($w_1=w_2=1$, all other $w_j=0$) only the terms $r=0$ and $r=1$ survive, and the recurrence reads $A(n+1)=A(n)+n\,A(n-1)$.  This gives $A(3)=A(2)+2A(1)=2+2=4$ and $A(4)=A(3)+3A(2)=4+6=10$, matching the table in the Introduction.
\end{example}

Everything about $A_w$ is encoded in this recurrence, and we next study it modulo $k$.

\section{Shift congruences}\label{sec:universal}

In this section we show how Theorem~\ref{thm:universal} follows from the recurrence \eqref{eq:recurrence} by induction on $n$.  Afterwards we draw out what it says about the sequence modulo $k$, namely that it is built from geometric progressions and is periodic whenever $A_w(k)\equiv1\pmod k$.

Roughly speaking, the proof compares the recurrence for $A(n+k+1)$ with the one for $A(n+1)$, which is the same sum except that the falling factorials $\fall{n+k}{r}$ replace $\fall{n}{r}$.  Modulo $k$ the two sets of coefficients agree for $r\le n$, and the extra terms with $r>n$ vanish.  Thus, modulo $k$, the recurrence for $A(n+k+1)$ is the recurrence for $A(n+1)$ with every earlier value shifted by $k$, and an induction closes the argument.

\begin{example}
Take involutions and $k=3$.  Then $A(3)=4\equiv1\pmod3$, and the theorem says $A(n+3)\equiv A(n)\pmod3$.  Indeed $A(3),A(4),A(5)=4,10,26\equiv1,1,2\equiv A(0),A(1),A(2)\pmod 3$.
\end{example}

The two facts used are in the next lemma: part~(i) says that the coefficients agree modulo $k$, and part~(ii) that the extra terms vanish.

\begin{lemma}\label{lem:falling}
For integers $n\ge0$, $k\ge1$, and $r\ge0$:
\begin{enumerate}[label=(\roman*)]
\item if $0\le r\le n$, then $\fall{n+k}{r}\equiv\fall{n}{r}\pmod{k}$,
\item if $n<r\le n+k$, then $k\mid\fall{n+k}{r}$.
\end{enumerate}
\end{lemma}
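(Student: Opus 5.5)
Both parts follow by looking at the falling factorial as a product of consecutive integers and reducing its factors modulo $k$; no earlier result of the paper is needed.

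For part~(i), write
\[
 \fall{n+k}{r}=\prod_{i=0}^{r-1}(n+k-i)
 \quad\text{and}\quad
 \fall{n}{r}=\prod_{i=0}^{r-1}(n-i).
\]
Each factor satisfies $n+k-i\equiv n-i\pmod k$. Congruences modulo $k$ are preserved under products, so the two products are congruent factor by factor, which gives $\fall{n+k}{r}\equiv\fall{n}{r}\pmod k$. This step does not use $r\le n$. That hypothesis only matters for the paper's application, since for $r\le n$ the product $\fall{n}{r}$ is the genuine coefficient in the recurrence for $C_{n+1}$.

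For part~(ii), let $n<r\le n+k$. The factors of $\fall{n+k}{r}$ are the integers $n+k,n+k-1,\ldots,n+k-r+1$. Since $r\le n+k$, the smallest factor satisfies $n+k-r+1\ge1$, so all factors are positive. Since $r\ge n+1$, the smallest factor also satisfies $n+k-r+1\le k$. Hence the range of factors contains the integer $k$ itself, because $n+k-r+1\le k\le n+k$. It follows that $k\mid\fall{n+k}{r}$.

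Equivalently, one can argue via (i): $\fall{n+k}{r}\equiv\fall{n}{r}\pmod k$, and $\fall{n}{r}=0$ for $r>n$ because it contains the factor $0$. This alternative is even shorter, and I would present it as the main argument, keeping the explicit factor $k$ as a sanity check.

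I expect no real obstacle. The only point needing care is the index bookkeeping in (ii): one must check that the factor equal to $k$, or equivalently the factor $0$ in $\fall{n}{r}$, really lies inside the product range. This holds exactly when $n<r$, and positivity of all factors uses $r\le n+k$.
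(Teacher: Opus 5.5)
Your proof is correct and follows the paper's argument: (i) is the factorwise (equivalently, integer-polynomial) congruence $n+k-i\equiv n-i\pmod k$, and (ii) locates the factor $k$, namely the factor at index $i=n$, inside the product, which is possible exactly because $r\ge n+1$. Your alternative for (ii), using (i) together with $\fall{n}{r}=0$ for $r>n$, is also valid. You are also right that (i) does not need $r\le n$, and in fact (ii) does not need $r\le n+k$ either.
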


\begin{proof}
Part (i) follows because $\fall{x}{r}$ is an integer polynomial in $x$.\footnote{If $x\equiv y\pmod k$ then $f(x)\equiv f(y)\pmod k$ for every $f\in\Z[x]$.}  For (ii), the product
\[
 \fall{n+k}{r}=(n+k)(n+k-1)\cdots(n+k-r+1)
\]
contains the factor $k$, namely the term obtained after subtracting $n$, because $r\ge n+1$.
\end{proof}

\begin{proof}[Proof of the modulus-$k$ assertion in Theorem~\ref{thm:universal}]
Work first in $R=\Z[X_1,X_2,\ldots]$ and write\linebreak[1] $C_m=C_m(X_1,\ldots,X_m)$.  Proposition \ref{prop:recurrence}, with $w_j=X_j$, is an identity in $R$.  Fix $k$ and argue by induction on $n$.  For $n=0$ the assertion is tautological because $C_0=1$.  Assume it holds for all indices up to $n$.  Applying the recurrence \eqref{eq:recurrence} with $n+k$ in place of $n$ gives
\begin{equation}\label{eq:shift-recurrence}
 C_{n+k+1}=\sum_{r=0}^{n+k}\fall{n+k}{r}X_{r+1}C_{n+k-r}.
\end{equation}
By Lemma \ref{lem:falling}(ii), the terms of \eqref{eq:shift-recurrence} with $r>n$ vanish in $R/kR$.  For $0\le r\le n$, Lemma \ref{lem:falling}(i) and the induction hypothesis give
\begin{equation}\label{eq:shift-terms}
 \fall{n+k}{r}C_{n+k-r}
 \equiv
 \fall{n}{r}C_{n-r}C_k\pmod{kR}.
\end{equation}
Thus, by \eqref{eq:shift-recurrence}, \eqref{eq:shift-terms} and then \eqref{eq:recurrence},
\begin{equation}\label{eq:shift-conclusion}
 C_{n+k+1}
 \equiv C_k\sum_{r=0}^{n}\fall{n}{r}X_{r+1}C_{n-r}
 =C_kC_{n+1}\pmod{kR},
\end{equation}
which completes the induction.  Substitution $X_j=w_j$ gives the asserted congruence for $A_w$.
\end{proof}

\begin{proof}[Completion of the symmetric strengthening in Theorem~\ref{thm:universal}]
The preceding argument proves divisibility by $k$.  Interchanging $n$ and $k$
proves divisibility by $n$, because $C_nC_k=C_kC_n$.  Coefficientwise
divisibility by both $n$ and $k$ is equivalent to divisibility by
$\operatorname{lcm}(n,k)$.  This completes the proof of Theorem~\ref{thm:universal}.
\end{proof}

\begin{remark}[A second proof, by a group action]\label{rem:group-action}
The proof above is an induction on the recurrence.  A second proof explains where the product $A_w(n)A_w(k)$ comes from, and it is the argument of~\cite[Proposition~2.2]{KallatBala}, stated here for arbitrary weights.

The key point is that a cycle running between the two parts cannot be fixed by a nontrivial rotation.  We first set up the notation.

\emph{Mixed and unmixed permutations.}  Split the $n+k$ points into $P=\{1,\ldots,n\}$ and a set $Q$ of $k$ further points.  Call a permutation of $P\cup Q$ \emph{unmixed} if every cycle lies inside $P$ or inside $Q$, and \emph{mixed} if some cycle meets both.  An unmixed permutation is a pair (a permutation of $P$, a permutation of $Q$), and its weight is the product of the two weights, so the unmixed permutations have total weight exactly $A_w(n)A_w(k)$.  It remains to show that the mixed permutations have total weight divisible by $k$.

\emph{A free action.}  Let the cyclic group $\Z/k$ act on the permutations of $P\cup Q$ by conjugation with powers of a $k$-cycle $\rho$ that rotates the points of $Q$ and fixes the points of $P$.  Conjugation does not change the cycle type, so it does not change the weight monomial.  We claim that no non-trivial rotation $g=\rho^j$ ($0<j<k$) fixes a mixed permutation $\sigma$.  A mixed cycle of $\sigma$ must at some step pass from a point $u\in P$ to a point $\sigma(u)\in Q$.  If $g\sigma g^{-1}=\sigma$, then $g(\sigma(u))=\sigma(g(u))=\sigma(u)$, because $g$ fixes $u$.  So $g$ fixes a point of $Q$, but a non-trivial power of a $k$-cycle has no fixed point.

\emph{Conclusion.}  Hence every orbit of mixed permutations has exactly $k$ elements, all with the same weight monomial, so the mixed permutations contribute $k$ times a polynomial with integer coefficients, and $C_{n+k}-C_nC_k\in k\,\Z[X_1,X_2,\ldots]$.  Exchanging $n$ and $k$ gives divisibility by $n$ as well.  The argument works for every $k$, composite or not.
\end{remark}

\begin{corollary}[Iterated shifts]\label{cor:iterated}
For $q,n\ge0$ and $k\ge1$,
\[
 A_w(n+qk)\equiv A_w(n)A_w(k)^q\pmod{k}.
\]
\end{corollary}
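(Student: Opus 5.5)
The plan is an induction on $q$, using only the modulus-$k$ case of Theorem~\ref{thm:universal}. It is cleanest to argue at the level of the universal polynomials and specialize at the end. So we will show that
\[
 C_{n+qk}\equiv C_nC_k^{\,q}\pmod{k\Z[X_1,X_2,\ldots]}
\]
for all $q,n\ge0$. Substituting $X_j=w_j$ then gives the stated congruence for $A_w$.

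For $q=0$ both sides equal $C_n$, so there is nothing to prove; the convention $C_k^{0}=1$ handles this, and so does $C_0=1$ when $n=0$. For the inductive step, assume the claim for $q$ and all $n\ge0$. Apply Theorem~\ref{thm:universal} with $n+qk$ in place of $n$; its final clause, the mod-$k$ statement for all $n\ge0$, is exactly what is needed. This gives
\[
 C_{n+(q+1)k}=C_{(n+qk)+k}\equiv C_{n+qk}\,C_k\pmod{kR}.
\]
By the induction hypothesis $C_{n+qk}\equiv C_nC_k^{\,q}\pmod{kR}$. Since $kR$ is an ideal of $R=\Z[X_1,X_2,\ldots]$, this congruence may be multiplied by $C_k$, which yields $C_{n+(q+1)k}\equiv C_nC_k^{\,q+1}\pmod{kR}$ and completes the induction.

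The only point needing a moment's care is the case $n=0$ in the use of the theorem, since Theorem~\ref{thm:universal} is stated with $n\ge1$ in its lcm form. The proof by recurrence given above, however, starts the induction at $n=0$ and proves the mod-$k$ congruence for all $n\ge0$; alternatively, for $n=0$ the congruence $C_k\equiv C_0C_k$ is trivial. There is no real obstacle beyond this bookkeeping. Finally, specializing $X_j\mapsto w_j$ preserves congruences modulo $k$, because it is a ring homomorphism $R\to\Z$ sending $kR$ into $k\Z$.
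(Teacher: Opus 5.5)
Your proof is correct and is essentially the paper's argument: the paper's proof simply says to iterate Theorem~\ref{thm:universal}, and your induction on $q$ (using the mod-$k$ clause, valid for all $n\ge0$) is that iteration written out. Working with the universal polynomials $C_n$ before specializing is a harmless extra layer, and it also yields the coefficientwise version.
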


\begin{proof}
Iterate Theorem \ref{thm:universal}.
\end{proof}

\begin{corollary}[Dynamics on residue classes]\label{cor:dynamics}
Fix $k\ge1$ and put $c\equiv A_w(k)\pmod{k}$.  For each residue $0\le r<k$ and every $q\ge0$,
\[
 A_w(r+qk)\equiv A_w(r)c^q\pmod{k}.
\]
Consequently:
\begin{enumerate}[label=(\roman*)]
\item if $c$ is a unit modulo $k$, then $A_w(n)$ is periodic modulo $k$, with period dividing $k\,\operatorname{ord}_k(c)$, where $\operatorname{ord}_k(c)$ is the least $h\ge1$ with $c^h\equiv1\pmod k$,
\item if $c$ is nilpotent modulo $k$ (some power of $c$ is $0$ modulo $k$), then every residue-class subsequence $A_w(r+qk)$ is eventually zero modulo $k$,
\item if $c\equiv1\pmod{k}$, then $k$ itself is a period modulo $k$,
\item if $c\equiv0\pmod{k}$, then $A_w(n)\equiv0\pmod{k}$ for every $n\ge k$.
\end{enumerate}
\end{corollary}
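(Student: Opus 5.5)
The main formula follows by induction on $q$ from Corollary~\ref{cor:iterated} with $n=r$. Indeed $A_w(r+qk)\equiv A_w(r)A_w(k)^q\equiv A_w(r)c^q\pmod k$, since replacing $A_w(k)$ by its residue $c$ does not change anything modulo $k$. The four consequences then come from looking at the powers $c^q$ in $\Z/k$, one residue class $r$ at a time.

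\textbf{Case (i).} Let $h=\operatorname{ord}_k(c)$, which exists because $c$ is a unit, so its powers cycle in the finite group $(\Z/k)^\times$. For $n=r+qk$ we have
\[
A_w(n+hk)=A_w(r+(q+h)k)\equiv A_w(r)c^{q+h}=A_w(r)c^q\equiv A_w(n)\pmod k .
\]
So $hk$ is a period. The least period divides every period, which gives the bound ``period dividing $k\operatorname{ord}_k(c)$''. I will read the claim in this sense: $hk$ is a period. Strictly speaking, the minimal period divides $hk$ only if the sequence is purely periodic. Here it is, because the relation holds for all $n\ge0$, and for a purely periodic sequence every period is a multiple of the minimal one.

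\textbf{Case (ii).} If $c^N\equiv0$, then $c^q\equiv0$ for all $q\ge N$. Hence $A_w(r+qk)\equiv0\pmod k$ for all $q\ge N$.

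\textbf{Cases (iii) and (iv).} These are the special cases $h=1$ and $N=1$ of (i) and (ii). For (iv), every $n\ge k$ can be written as $r+qk$ with $q\ge1$, so $A_w(n)\equiv A_w(r)\cdot0\equiv0\pmod k$.

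There is essentially no obstacle here. The only point needing care is stating the period claim correctly, as discussed under (i).
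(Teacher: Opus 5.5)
Your proposal is correct and follows the paper's proof essentially verbatim: the displayed formula is Corollary~\ref{cor:iterated} with $n=r$, so no separate induction is needed. Case (i) comes from shifting $q$ by $h=\operatorname{ord}_k(c)$, and (ii)--(iv) come from large powers of a nilpotent $c$ or from the specializations $c\equiv1$ and $c\equiv0$. Your extra remark, that the minimal period divides $hk$ because the sequence is purely periodic from $n=0$, correctly makes explicit a point the paper leaves implicit.
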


\begin{proof}
The displayed formula is Corollary \ref{cor:iterated}.  If $c$ is a unit, choose its multiplicative order $h$ modulo $k$.  Replacing $q$ by $q+h$ leaves every residue-class subsequence unchanged, so $kh$ is a period.  The other assertions follow by taking a sufficiently large power of a nilpotent $c$, or by specializing to $c=1$ and $c=0$.
\end{proof}

\begin{example}
For involutions, $A(3)=4\equiv1\pmod3$, so (iii) gives period $3$ modulo $3$, with the values $1,1,2,1,1,2,\ldots$.  Modulo $4$ we have $c=A(4)=10\equiv2$, and $2^2\equiv0\pmod4$, so $c$ is nilpotent and (ii) says that every residue-class subsequence is eventually $0$, and indeed the sequence is $1,1,2,0,2,2,0,0,0,\ldots$.
\end{example}

\begin{remark}[Fixed modulus]\label{thm:fixed-modulus-classification}
For any integer sequence $(a_n)$ with $a_0=1$ and fixed $k$, the congruence $a_{n+k}\equiv a_na_k\pmod k$ for all $n\ge0$ is equivalent to $a_{r+qk}\equiv a_ra_k^q\pmod k$ for all $0\le r<k$, $q\ge0$, that is, to each residue-class subsequence being a geometric progression with ratio $a_k$ modulo $k$.  (One direction is an induction on $q$, and the other follows by writing $n=r+qk$.)  Thus the locus of sequences satisfying the shift congruence modulo $k$ is determined by $a_0,\ldots,a_k$ alone, and the content of our theorems is that weighted-permutation sequences lie in it for every $k$.
\end{remark}

\begin{remark}[Substitutions]\label{rem:substitutions}
A coefficientwise congruence survives every substitution of the variables.  For instance, with an extra variable $z$ that counts cycles, $P_n(z)=C_n(zw_1,\ldots,zw_n)$ satisfies $P_{n+k}(z)\equiv P_n(z)P_k(z)\pmod k$ coefficientwise in $z$, which is how the shift congruence applies to triangles of numbers counted by the number of cycles.
\end{remark}

\subsection{The value at a prime, and when \texorpdfstring{$A_w(k)\equiv1$}{A(k)=1}}\label{sec:a-k}

Section~\ref{sec:odd} needs the value of $A_w(p)$ modulo a prime $p$.  For a prime $p$ put $\gamma_p\coloneqq w_1+(p-1)w_p$.

\begin{proposition}\label{prop:prime}
For every prime $p$,
\[
 A_w(p)\equiv w_1^p+(p-1)!w_p\equiv w_1-w_p\equiv\gamma_p\pmod p.
\]
\end{proposition}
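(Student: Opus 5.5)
The plan is to read the congruence directly off the cycle-index formula of Proposition~\ref{prop:cycle-index} with $n=p$, reducing each coefficient $p!/z_\lambda$ modulo $p$.

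\emph{Which coefficients survive.} For a partition $\lambda\vdash p$, the coefficient $p!/z_\lambda$ is the number of permutations of cycle type $\lambda$. Let $\Z/p$ act on $S_p$ by conjugation with a fixed $p$-cycle $\rho$; this preserves cycle type. A permutation is fixed exactly when it commutes with $\rho$, that is, when it lies in the centralizer $\langle\rho\rangle$. The fixed permutations are therefore the identity (type $1^p$) and the $p-1$ nontrivial powers of $\rho$ (type $p^1$). All other orbits have size $p$, so $p!/z_\lambda\equiv0\pmod p$ for every $\lambda$ other than $1^p$ and $p^1$.

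One can also check this without the group action. For $\lambda\neq1^p,p^1$, every part $j$ satisfies $1\le j<p$ and every multiplicity satisfies $m_j<p$. Hence $p\nmid z_\lambda$, while $p\mid p!$, which again gives $p\mid p!/z_\lambda$.

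\emph{The two surviving terms.} The type $1^p$ has coefficient $1$ and monomial $w_1^p$. The type $p^1$ has coefficient $p!/p=(p-1)!$ and monomial $w_p$. This gives the first congruence
\[
 A_w(p)\equiv w_1^p+(p-1)!\,w_p\pmod p.
\]

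\emph{The remaining congruences.} Fermat's little theorem gives $w_1^p\equiv w_1$, and Wilson's theorem gives $(p-1)!\equiv-1$. So the right-hand side is $w_1-w_p$, and since $-1\equiv p-1\pmod p$ this equals $w_1+(p-1)w_p=\gamma_p$.

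There is no real obstacle. The only point needing care is the case $p=2$, where Wilson's theorem still holds since $1!=1\equiv-1\pmod 2$, and Fermat's little theorem is trivial.

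The polynomial versions of the first steps also hold: $C_p\equiv X_1^p+(p-1)!X_p\pmod p$ coefficientwise. The Fermat step, however, applies only after substituting integers for the variables.
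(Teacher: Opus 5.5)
Your proof is correct and follows the paper's argument: apply the cycle-index formula at $n=p$, observe that only the types $(1^p)$ and $(p)$ have class sizes prime to $p$, and finish with Fermat and Wilson. Your two justifications of why the other class sizes are divisible by $p$ (the conjugation action of $\Z/p$, and the observation $p\nmid z_\lambda$) fill in a step that the paper states without detail.
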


\begin{proof}
In \eqref{eq:cycle-index}, every cycle type other than $(1^p)$ and $(p)$ has conjugacy-class size divisible by $p$.  The two exceptional contributions are $w_1^p$ and $(p-1)!w_p$.  Fermat's and Wilson's theorems give the second congruence, and $-w_p\equiv(p-1)w_p\pmod p$ the last.
\end{proof}

The shift congruence is simplest when $A_w(k)\equiv1\pmod k$.  The next lemma says when this happens for the large class of e.g.f.s of the form $F\exp(xG)$.

\begin{lemma}\label{lem:a-k-mod-k}
Let $\sum_{n\ge0}a(n)x^n/n!=F(x)\exp(xG(x))$ with $F,G\in\Z[[x]]$.  Then $a(k)\equiv F(0)G(0)^k\pmod k$ for every $k\ge1$.  In particular, if $F(0)=G(0)=1$ then $a(k)\equiv1\pmod k$.  This applies to every integer weight sequence with $w_1=1$ and $m\mid w_m$ for all $m$, because then $\sum_nA_w(n)x^n/n!=\exp(\sum_mu_mx^m)=\exp(xG(x))$ with $u_m=w_m/m$ and $G(x)=\sum_{m\ge0}u_{m+1}x^m$, $G(0)=u_1=w_1$.
\end{lemma}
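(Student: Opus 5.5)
The plan is to expand the exponential and extract the coefficient of $x^k$ directly; no induction or group action is needed.

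Write
\[
 F(x)\exp\bigl(xG(x)\bigr)=\sum_{j\ge0}\frac{x^jG(x)^j}{j!}F(x).
\]
Only the terms with $j\le k$ contribute to $[x^k]$, so
\[
 a(k)=k![x^k]F(x)\exp\bigl(xG(x)\bigr)=\sum_{j=0}^{k}\frac{k!}{j!}\,[x^{k-j}]\bigl(F(x)G(x)^j\bigr).
\]
Since $F,G\in\Z[[x]]$, each coefficient $[x^{k-j}](FG^j)$ is an integer.

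Next I use divisibility of the falling factorials. For $0\le j<k$, the prefactor $k!/j!=k(k-1)\cdots(j+1)=\fall{k}{k-j}$ is a nonempty product whose first factor is $k$, so it is divisible by $k$. This is the same observation as Lemma~\ref{lem:falling}(ii). Hence every term with $j<k$ vanishes modulo $k$. The remaining term $j=k$ is $[x^0](FG^k)=F(0)G(0)^k$, and this gives $a(k)\equiv F(0)G(0)^k\pmod k$. The case $F(0)=G(0)=1$ is immediate.

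For the application to weight sequences, I check that the generating function has the required form. If $m\mid w_m$ for all $m$, then $u_m=w_m/m\in\Z$ and
\[
 \sum_m w_m\frac{x^m}{m}=\sum_m u_mx^m=x\sum_{m\ge0}u_{m+1}x^m=xG(x),
\]
with $G\in\Z[[x]]$ and $G(0)=u_1=w_1=1$. Taking $F=1$ then gives $A_w(k)\equiv1\pmod k$.

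I do not expect a real obstacle. The only point needing care is that integrality of the coefficients of $FG^j$ is what allows the factor $k$ to be pulled out of each term with $j<k$.
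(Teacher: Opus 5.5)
Your proof is correct and follows essentially the same route as the paper: expand the exponential, observe that $k!/j!$ is divisible by $k$ for $j<k$ while the remaining coefficients are integers, and read off the surviving term $F(0)G(0)^k$. The paper does this in two stages. It first peels off the coefficients of $F$, then applies the same divisibility argument to $\eta_k=k![x^k]e^{xG}$. You merge both stages into the single expansion $a(k)=\sum_{j=0}^{k}\frac{k!}{j!}[x^{k-j}](FG^j)$, which is slightly more economical.
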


\begin{proof}
Write $a(k)=\sum_{j=0}^kf_j\,k!\,[x^{k-j}]e^{xG(x)}$, where $F=\sum f_jx^j$.  For $0\le i\le k$ we have $i!\,[x^i]e^{xG}=\sum_{l=0}^i\frac{i!}{l!}[x^{i-l}]G^l=:\eta_i\in\Z$, so $k!\,[x^{k-j}]e^{xG}=\frac{k!}{(k-j)!}\eta_{k-j}$.  For $j\ge1$ the factor $k!/(k-j)!=k(k-1)\cdots(k-j+1)$ is divisible by $k$.  For $j=0$ we get $f_0\eta_k$, and $\eta_k=\sum_{l=0}^k\frac{k!}{l!}[x^{k-l}]G^l\equiv G(0)^k\pmod k$, because $k!/l!$ is divisible by $k$ for $l<k$.
\end{proof}

\begin{remark}[Bala's class]
Let $F(0)=1$ and $G(0)=1$ in Lemma~\ref{lem:a-k-mod-k}.  Then every logarithmic weight is an integer, since $m[x^m]\log F=[x^m]\,xF'/F\in\Z$ ($F$ is invertible in $\Z[[x]]$) and the factor $\exp(xG)$ contributes $m[x^{m-1}]G\in\Z$.  So the sequences of Bala's 2017 theorem~\cite{Bala2017} are weighted-permutation sequences with $A_w(k)\equiv1\pmod k$, and Theorem~\ref{thm:universal} gives his congruence $a(n+k)\equiv a(n)\pmod k$ back.  Conversely, for general integer weights $A_w(k)\not\equiv1\pmod k$ (for involutions, $A_w(2)=2$), so the factor $A_w(k)$ cannot be dropped.  By Corollary~\ref{cor:dynamics}, $A_w(k)\equiv1\pmod k$ for every $k$ holds exactly when the sequence is purely periodic modulo every $k$ with period dividing $k$.  It would be interesting to describe all weight sequences with this property, and Bala~\cite{Bala2017} asks whether such sequences exist outside the form $F\exp(xG)$.
\end{remark}

\begin{corollary}\label{cor:n-n-1}
Let $w$ be an integral weight sequence with $w_1=1$ and $A_w(k)\equiv1\pmod k$ for all $k$ (by Lemma~\ref{lem:a-k-mod-k} this holds, for instance, when $m\mid w_m$ for all $m$, and more generally when $\sum A_w(n)x^n/n!=F(x)\exp(xG(x))$ with $F(0)=G(0)=1$).  Then $A_w(n)\equiv1\pmod{n(n-1)}$ for all $n\ge2$.  For example, for ``sets of lists''\footnote{The OEIS sequence \OEIS{A000262}.} ($w_m=m$) we have $13\equiv1\pmod 6$, $73\equiv1\pmod{12}$ and $501\equiv1\pmod{20}$.
\end{corollary}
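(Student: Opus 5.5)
The plan is to combine the shift congruence, applied once with $k=n-1$, with the hypothesis $A_w(k)\equiv1\pmod k$ used at $k=n$ and at $k=n-1$. Then we glue the two congruences using $\gcd(n,n-1)=1$.

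\emph{Modulo $n$.}  The hypothesis at $k=n$ gives $A_w(n)\equiv1\pmod n$ directly.

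\emph{Modulo $n-1$.}  Apply Theorem~\ref{thm:universal} to the pair $(1,n-1)$, that is, shift $1$ by $k=n-1$:
\[
 A_w(n)=A_w(1+(n-1))\equiv A_w(1)A_w(n-1)\pmod{n-1}.
\]
Here $A_w(1)=w_1=1$, and the hypothesis at $k=n-1$ gives $A_w(n-1)\equiv1\pmod{n-1}$. Hence $A_w(n)\equiv1\pmod{n-1}$. For $n=2$ the modulus $n-1=1$ makes this step trivial, so no separate case is needed.

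\emph{Gluing.}  Since $n$ and $n-1$ are coprime, the Chinese remainder theorem turns the two congruences into $A_w(n)\equiv1\pmod{n(n-1)}$.

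The parenthetical sufficient conditions in the statement come from Lemma~\ref{lem:a-k-mod-k}. When $m\mid w_m$ for all $m$ with $w_1=1$, or when the e.g.f.\ is $F\exp(xG)$ with $F(0)=G(0)=1$, that lemma gives $A_w(k)\equiv1\pmod k$. In the $F\exp(xG)$ case we also need $w_1=1$ and integrality of the weights. Integrality is the remark on Bala's class. For $w_1$, note $w_1=A_w(1)=F'(0)+G(0)$, so the requirement $w_1=1$ must be imposed separately; the corollary does impose it as a standing assumption.

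The examples with $w_m=m$ are numerical checks: $13-1=12$, $73-1=72$ and $501-1=500$ are divisible by $6$, $12$ and $20$ respectively. There is no real obstacle. The only point needing care is to remember that the factor $A_w(1)=w_1$ appears in the shift congruence, which is exactly why $w_1=1$ is assumed.
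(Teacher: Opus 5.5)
Your proposal is correct and follows the paper's proof: the shift congruence with the pair $(1,n-1)$ and $A_w(1)=w_1=1$ gives $A_w(n)\equiv1\pmod{n-1}$, the hypothesis gives $A_w(n)\equiv1\pmod n$, and coprimality of $n$ and $n-1$ combines the two. Your remark that $w_1=1$ must be assumed separately in the $F\exp(xG)$ case is also right, since there $w_1=F'(0)+G(0)$.
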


\begin{proof}
Theorem~\ref{thm:universal} with $n\to1$, $k\to n-1$ gives $A_w(n)\equiv A_w(1)A_w(n-1)=A_w(n-1)\equiv1\pmod{n-1}$, and $A_w(n)\equiv1\pmod n$ by hypothesis.  Since $\gcd(n,n-1)=1$, the claim follows.
\end{proof}

The corollary settles the observations of Bala recorded for \OEIS{A251568} and \OEIS{A251569}, namely that $a(n)-1$ is divisible by $n(n-1)$ for $n\ge2$ (their e.g.f.s are $\exp(xK^2)$ and $\exp(xG)$ with $K=1+xK^2$ and $G=1+xG^3$, so Lemma~\ref{lem:a-k-mod-k} applies with $F=1$).

\section{The best modulus for the shift congruence}\label{sec:content}

In this section we prove Theorem~\ref{thm:optimal-divisor}, which gives the exact divisor of the shift congruence, and it rests on Theorem~\ref{thm:universal}, on Lemma~\ref{lem:mixed-support-factorial} and on an orbit count.

Theorem~\ref{thm:universal} says that the \emph{defect}
\begin{equation}\label{eq:defect}
 D_{n,k}\coloneqq C_{n+k}-C_nC_k
\end{equation}
has every coefficient divisible by $\operatorname{lcm}(n,k)$.  The true divisibility can be larger, as two small cases show.
\begin{align*}
 C_4-C_2^2&=4X_1^2X_2+2X_2^2+8X_1X_3+6X_4,\\
 C_5-C_2C_3&=6X_1^3X_2+12X_1X_2^2+18X_1^2X_3+18X_2X_3+30X_1X_4+24X_5.
\end{align*}
In the first case ($n=k=2$) the largest integer dividing every coefficient is $2=\operatorname{lcm}(2,2)$, while in the second ($n=2$, $k=3$) it is $6=\operatorname{lcm}(2,3)=nk$.  For $n=k=4$ it is $8$, not $\operatorname{lcm}(4,4)=4$.  The reader may wish to keep the model case $n=k=2$ in mind throughout this section.

\begin{definition}[Content of the defect]\label{def:defect-content}
The \emph{content} of a polynomial with integer coefficients is the greatest common divisor of its coefficients (Section~\ref{sec:notation}).  For $n,k\ge1$ let $d_{n,k}$ be the content of $D_{n,k}$.  Thus $d_{n,k}$ is the largest positive integer $m$ with $C_{n+k}\equiv C_nC_k\pmod m$ coefficientwise.
\end{definition}

Since reducing coefficients modulo $m$ is a ring homomorphism, $m\mid d_{n,k}$ if and only if $c_{n+k}-c_nc_k\in mR$ for every commutative ring $R$ and every choice of weights $r_j\in R$ (use $R=\Z[X_1,X_2,\ldots]/m$ for the converse).  So $d_{n,k}$ is the best modulus for weights in an arbitrary commutative ring.  We prove
\begin{equation}\label{eq:content-formula}
 d_{n,k}=\frac{nk}{\operatorname{rad}(\gcd(n,k))}.
\end{equation}
In words, if $n$ and $k$ share no prime factor then $d_{n,k}=nk$, and for every prime $p$ dividing both, one factor of $p$ is lost compared with $nk$.

\begin{theorem}[Exact universal divisor]\label{thm:optimal-divisor}
For every $n,k\ge1$,
\[
 d_{n,k}=\frac{nk}{\operatorname{rad}(\gcd(n,k))}.
\]
Equivalently,
\[
 v_p(d_{n,k})=
 \begin{cases}
 v_p(n)+v_p(k)-1,&p\mid\gcd(n,k),\\
 \max\{v_p(n),v_p(k)\},&p\nmid\gcd(n,k).
 \end{cases}
\]
\end{theorem}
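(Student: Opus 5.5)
We prove the two inequalities one prime $p$ at a time. Write $a=v_p(n)$ and $b=v_p(k)$. When $p\nmid\gcd(n,k)$, Theorem~\ref{thm:universal} already gives $p^{\max(a,b)}\mid d_{n,k}$. So the lower bound only needs work when $a,b\ge1$, where we must show $p^{a+b-1}\mid d_{n,k}$.

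For the lower bound we use the orbit count announced in the overview. Let $G=S_P\times S_Q$, with $|P|=n$ and $|Q|=k$, act by conjugation on the mixed permutations of $P\cup Q$. This action preserves cycle type, hence the weight monomial. Consequently the coefficient of $X_\lambda$ in $D_{n,k}$ is a sum of orbit sizes $[G:Z_G(\sigma)]$ over mixed $\sigma$ of type $\lambda$. It suffices to show that for every mixed $\sigma$
\[
 v_p\bigl(n!\,k!\bigr)-v_p\bigl|Z_G(\sigma)\bigr|\ \ge\ a+b-1 .
\]
The centralizer $Z_G(\sigma)$ consists of elements of $Z_{S_{n+k}}(\sigma)$ that preserve $P$. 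It is generated by rotations inside cycles that are compatible with the $P/Q$ pattern on each cycle, together with permutations of isomorphic ``colored cycles''. Pure cycles (inside $P$ or inside $Q$) contribute at most as they would to $S_{|P_0|}\times S_{|Q_0|}$. A mixed cycle of length $\ell$ with $i$ points in $P$ and $j$ in $Q$ has a rotational stabilizer of order dividing $\gcd(i,j)$. Here $i,j\ge1$ and $i+j=\ell$, so the order is strictly smaller than both $i$ and $j$ unless the pattern is periodic.

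This is where Lemma~\ref{lem:mixed-support-factorial} enters. I expect it to say roughly that if $i\le n$ and $j\le k$ are the supports of the mixed part, then
\[
 v_p\!\left(\binom{n}{i}\binom{k}{j}\,\frac{i!\,j!}{|Z_{\rm mixed}|}\right)\ge a+b-1 .
\]
The proof reduces to $v_p(n!/(n-i)!)\ge a$-type bounds, namely $v_p(\fall{n}{i})\ge v_p(n)$ for $i\ge1$, applied to both blocks. One then checks that the centralizer of the mixed part loses at most one factor of $p$ jointly. The worst case is a single $p$-fold symmetric configuration, which costs exactly one $p$. The main obstacle is making this $p$-adic bookkeeping of the mixed centralizer rigorous. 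I would first try to bound $|Z_{\rm mixed}|$ by $\prod_c \gcd(i_c,j_c)\cdot\prod m!$ and then use Legendre's formula together with $v_p(\gcd(i,j))\le\min(v_p i,v_p j)$.

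For the upper bound it suffices, for each prime $p$ with $a,b\ge1$, to exhibit one coefficient of $D_{n,k}$ with $p$-adic valuation exactly $a+b-1$. When $p\nmid\gcd(n,k)$ the target is $\max(a,b)$, and we take the coefficient of $X_{n+k}$, which equals $(n+k-1)!$ since $C_nC_k$ has no $X_{n+k}$ term. Its valuation is not obviously right, so instead I would use a monomial such as $X_1^{n-1}X_1^{k-1}X_2$, meaning $X_1^{n+k-2}X_2$. Its coefficient in $D_{n,k}$ is
\[
 \binom{n+k}{2}-\binom n2-\binom k2=nk,
\]
so $v_p=a+b$, which is too big in the shared case but right when $p$ divides only one of $n,k$. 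In that case $a+b=\max(a,b)$, so this handles $p\nmid\gcd$. For $p$ dividing both, take the monomial $X_1^{n+k-2p}X_{2p}$, or more naturally $X_1^{n+k-2p}X_p^2$. A direct count of mixed permutations with two $p$-cycles and fixed points, namely the total count minus the unmixed count, gives an explicit binomial expression. We then evaluate its valuation by Kummer's theorem and aim to get exactly $a+b-1$. The case $n=k=2$, $p=2$, with coefficient $2$ of $X_2^2$, is the model. Choosing the right witnessing monomial and verifying its valuation is the second delicate step.
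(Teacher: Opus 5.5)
Your architecture matches the paper's: the lcm bound for primes not dividing $\gcd(n,k)$, the conjugation action of $H=S_P\times S_Q$ for the lower bound at shared primes, the coefficient $nk$ of $X_1^{n+k-2}X_2$ for the upper bound at unshared primes, and one explicit witness coefficient at shared primes. However, both steps you flag as delicate are left unproved, and in both cases the route you sketch does not work as stated.

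\emph{Lower bound.} Your reduction rests on three pieces: $v_p(\fall{n}{i})\ge a$, $v_p(\fall{k}{j})\ge b$, and the claim that the mixed centralizer costs at most one factor of $p$. This cannot work, because the $p$-part of the centralizer on the mixed cycles is unbounded. Take $n=k=4$, $P=\{1,2,3,4\}$, $Q=\{5,6,7,8\}$ and $\sigma=(1\,5)(2\,6)(3\,7)(4\,8)$. Then $Z_H(\sigma)$ is a diagonal copy of $S_4$, of $2$-adic valuation $3$. The orbit still has valuation $3=a+b-1$, but only because $v_2(\fall{4}{4})=3>a=2$. So the extra factors in the falling factorials must be used. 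Two ingredients are missing.
\begin{enumerate}
\item[(i)] The restriction of $Z_H(\sigma)$ to the mixed support $M$ acts faithfully on $M\cap P$ and also on $M\cap Q$. This holds because a permutation commuting with $\sigma$ is determined on a cycle by the image of one point, and every mixed cycle meets both parts. Hence its $p$-part is at most $\min\{v_p(i!),v_p(j!)\}$. Your wreath-product count $\prod_c r_c^{m_c}m_c!$ gives the same bound via $r^m m!\mid(rm)!$.
\item[(ii)] The arithmetic inequality of Lemma~\ref{lem:mixed-support-factorial}. It is proved from $v_p\binom{n}{i}\ge a-v_p(i)$, from $v_p(\fall{k}{j})\ge b+v_p((j-1)!)$, and from the implication $v_p(i!)\le v_p(j!)\Rightarrow v_p(i)\le v_p((j-1)!)+1$.
\end{enumerate}
The inequality you guessed is true. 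But ``the worst case costs exactly one $p$'' is a heuristic, not a proof of it.

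\emph{Upper bound at shared primes.} You never commit to a witness, and the one you call more natural fails for odd $p$. For $n=k=p=3$, the coefficient of $X_3^2$ in $C_6-C_3^2$ is $40-4=36$. Its $3$-adic valuation is $2$, which exceeds $a+b-1=1$. More generally, for $n=k=p$ odd this coefficient equals $(p-1)!\bigl(\prod_{j=1}^{p-1}(p+j)-(p-1)!\bigr)$, which is $\equiv0\pmod{p^2}$. Your model case $p=2$ hides this.

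The witness that works is $X_1^{n+k-2p}X_{2p}$, with coefficient $\frac{1}{2p}\sum_{i=1}^{2p-1}\binom{2p}{i}\fall{n}{i}\fall{k}{2p-i}$.
\begin{enumerate}
\item[(i)] For odd $p$, the central term $i=p$ has valuation exactly $a+b-1$. Indeed $v_p\binom{2p}{p}=0$, and among $n,\dots,n-p+1$ only $n$ is divisible by $p$ (likewise for $k$).
\item[(ii)] For odd $p$ and $i\ne p$, $v_p\binom{2p}{i}=1$, which puts every other term at valuation at least $a+b$.
\item[(iii)] For $p=2$, the central term again has valuation $a+b-1$, and the outer terms gain the factor $k-2$ or $n-2$.
\end{enumerate}
No Kummer argument is needed.
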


\subsection{Outline of the proof}

The proof of \eqref{eq:content-formula} has two parts.  To show that nothing larger works, we exhibit one explicit coefficient of $D_{n,k}$ whose divisibility by each prime is exactly what the formula predicts (Propositions~\ref{prop:content-bounds} and~\ref{prop:prime-witness}).  To show that the predicted number really divides everything, we write each coefficient as a count of permutations, let a product of two symmetric groups act on them by conjugation, and show that all orbits have size divisible by the predicted number (Lemma~\ref{lem:mixed-support-factorial} and the proof of the theorem).

\begin{proposition}[Unconditional content bounds]\label{prop:content-bounds}
For all $n,k\ge1$,
\[
 \operatorname{lcm}(n,k)\mid d_{n,k}\mid nk.
\]
In particular, if $(n,k)=1$, then $d_{n,k}=nk$.
\end{proposition}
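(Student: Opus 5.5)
The lower bound $\operatorname{lcm}(n,k)\mid d_{n,k}$ is exactly Theorem~\ref{thm:universal}. By Definition~\ref{def:defect-content}, $d_{n,k}$ is the largest $m$ with $D_{n,k}\equiv0\pmod m$ coefficientwise, and Theorem~\ref{thm:universal} says $D_{n,k}\in\operatorname{lcm}(n,k)\,\Z[X_1,X_2,\ldots]$. So nothing new is needed for this half.

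For the upper bound $d_{n,k}\mid nk$, I will exhibit one coefficient of $D_{n,k}$ equal to $nk$. Then the content, being a divisor of every coefficient, divides $nk$. The natural witness is the monomial $X_1^{n+k-2}X_2$, the cycle type with a single $2$-cycle and all other points fixed. I compute its coefficient in each term using the cycle-index formula, Proposition~\ref{prop:cycle-index}.

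\begin{itemize}
\item In $C_{n+k}$ the coefficient is the number of transpositions in $S_{n+k}$, namely $\binom{n+k}{2}$.
\item In $C_nC_k$ a monomial $X_1^{n+k-2}X_2$ arises only as (one transposition in $S_n$, identity in $S_k$) or (identity in $S_n$, one transposition in $S_k$). The identity contributes coefficient $1$ to $X_1^n$ and to $X_1^k$, so the coefficient is $\binom n2+\binom k2$.
\end{itemize}

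The coefficient of $X_1^{n+k-2}X_2$ in $D_{n,k}$ is therefore
\[
 \binom{n+k}{2}-\binom n2-\binom k2=nk .
\]
This identity counts the pairs $\{i,j\}$ with one point in each block. It also covers the degenerate cases $n=1$ or $k=1$, where $\binom12=0$; for instance $D_{1,1}=X_2$ has coefficient $1=nk$.

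Finally, if $\gcd(n,k)=1$ then $\operatorname{lcm}(n,k)=nk$, so the two bounds coincide and $d_{n,k}=nk$. There is no real obstacle here. The only care needed is to check that no other pair of cycle types in $S_n\times S_k$ produces the monomial $X_1^{n+k-2}X_2$, which is clear because a cycle type containing a $2$-cycle contributes a factor $X_2$ to the product.
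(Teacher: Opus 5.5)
Your proposal is correct and follows the paper's proof essentially verbatim. The lower bound comes from the symmetric strengthening in Theorem~\ref{thm:universal}, and the upper bound from the coefficient of $X_1^{n+k-2}X_2$, which is $\binom{n+k}{2}-\binom n2-\binom k2=nk$.
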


\begin{proof}
The left divisibility is the symmetric strengthening of
Theorem~\ref{thm:universal}.  For the right divisibility, inspect the
coefficient of $X_1^{n+k-2}X_2$.  In $C_{n+k}$ it is
$\binom{n+k}{2}$, whereas in $C_nC_k$ it is
$\binom n2+\binom k2$.  Their difference is $nk$.  Since the content divides
every coefficient, $d_{n,k}\mid nk$.  If $(n,k)=1$, then
$\operatorname{lcm}(n,k)=nk$, so both bounds coincide.
\end{proof}

\begin{remark}[Polynomials versus integer weights]\label{rem:poly-vs-integer}
Theorem~\ref{thm:optimal-divisor} is about the coefficients of a polynomial.  A polynomial can be divisible by $m$ at every integer point without being divisible coefficientwise ($X^2-X$), so for integer weights the exact modulus can be larger than the content.  This happens, but rarely, as for $(n,k)=(4,8)$, where the content is $16$ and yet $A_w(12)\equiv A_w(4)A_w(8)\pmod{32}$ for every integer weight sequence, and for $(8,8)$, where the content is $32$ but the congruence holds modulo $64$.\footnote{This was checked exactly by expanding in the basis of products of binomial coefficients, in which divisibility at every integer point is equivalent to divisibility of every coefficient.}  Among pairs with $n\le k$ and $n+k\le20$ the exceptions are exactly $(4,8)$, $(8,8)$, $(4,12)$, $(4,16)$ and $(8,12)$, so the modulus $nk/\operatorname{rad}(\gcd(n,k))$ is attained by integer weights for most pairs $(n,k)$.  It would be interesting to find the exact modulus for integer weights.
\end{remark}

To show that nothing larger works, we look at the single coefficient of $X_{2p}X_1^{N-2p}$, which counts how a $2p$-cycle can be cut between the two sets.

\begin{proposition}[Explicit prime-adic sharpness witness]\label{prop:prime-witness}
Let $p$ be a prime dividing both $n$ and $k$, and put $N=n+k$. For
$\lambda_p=(2p,1^{N-2p})$ one has
\[
 v_p\!\left([X_{2p}X_1^{N-2p}](C_N-C_nC_k)\right)
 =v_p(n)+v_p(k)-1.
\]
Thus no universal content can contain a higher power of a common prime $p$
than the one predicted by Theorem~\ref{thm:optimal-divisor}.
\end{proposition}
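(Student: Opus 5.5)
We would compute the coefficient in closed form and then evaluate its $p$-adic valuation term by term. Write $a=v_p(n)\ge1$ and $b=v_p(k)\ge1$.

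\emph{Step 1: a closed form for the coefficient.} By the cycle-index formula (Proposition~\ref{prop:cycle-index}), the coefficient of $X_{2p}X_1^{N-2p}$ in $C_N$ is the number of permutations of $N$ points consisting of one $2p$-cycle and fixed points. That number is $\binom{N}{2p}(2p-1)!$.

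In $C_nC_k$ the monomial $X_{2p}X_1^{N-2p}$ arises only when the $2p$-cycle lies entirely in one block. This gives the contribution $\bigl(\binom n{2p}+\binom k{2p}\bigr)(2p-1)!$, where a binomial coefficient is zero if its top entry is too small.

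Subtracting and applying Vandermonde's identity, the coefficient equals
\[
 (2p-1)!\sum_{j=1}^{2p-1}\binom nj\binom k{2p-j}.
\]
This counts the $2p$-cycles that meet both blocks. Note that $N\ge2p$, since $p\mid n$ and $p\mid k$ force $n,k\ge p$.

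\emph{Step 2: reduce to a valuation of the sum.} We have $v_p\bigl((2p-1)!\bigr)=1$, since $p$ is the only multiple of $p$ below $2p$; this holds for $p=2$ as well, because $3!=6$. So it suffices to show that the sum $S=\sum_{j=1}^{2p-1}\binom nj\binom k{2p-j}$ satisfies $v_p(S)=a+b-2$ exactly.

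\emph{Step 3: the valuations of the individual terms.} We would use the identity $\binom nj=\frac nj\binom{n-1}{j-1}$. When $p\nmid j$ it gives $v_p\binom nj\ge a$, and likewise $v_p\binom k{2p-j}\ge b$ when $p\nmid 2p-j$.

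For $1\le j\le 2p-1$, the only index with $p\mid j$ or $p\mid 2p-j$ is $j=p$. Hence every term with $j\ne p$ has valuation at least $a+b$.

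The term $j=p$ is $\binom np\binom kp$. By Kummer's theorem, adding $p$ and $n-p$ in base $p$ produces exactly $a-1$ carries: the digit $1$ at position $1$ propagates through the zero digits of $n$ at positions $1,\dots,a-1$ and stops at the first nonzero digit of $n$. So $v_p\binom np=a-1$. Alternatively, Lucas' theorem gives $\binom{pm}{p}\equiv m\pmod p$, and iterating this yields the same count. Likewise $v_p\binom kp=b-1$.

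Therefore $S$ has valuation exactly $a+b-2$: the term $j=p$ has valuation $a+b-2$, and all other terms have valuation at least $a+b$. Multiplying by $(2p-1)!$ gives $v_p=a+b-1$, as claimed.

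The main point requiring care is Step 3. The lower bounds for $j\neq p$ must hold uniformly, including for $j>p$ with $a=1$, where the exact valuation of $\binom nj$ is not controlled. The plan deliberately uses only the inequality $v_p\binom nj\ge a-v_p(j)$ there, and computes exactly only at $j=p$. Everything else is bookkeeping.
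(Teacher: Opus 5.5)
Your proof is correct and follows the paper's argument: you count the $2p$-cycles by how many of their points lie in each part, and you show that the central term $j=p$ is the unique term of minimal valuation, all others having valuation at least $a+b$ before the factor $(2p-1)!$. The differences are cosmetic: you factor out $(2p-1)!$ and bound the noncentral terms via $\binom nj=\frac nj\binom{n-1}{j-1}$, which handles $p=2$ uniformly, whereas the paper writes the terms as $\frac1{2p}\binom{2p}{i}(n)_i(k)_{2p-i}$ and treats $p=2$ separately; also, your Lucas aside is unnecessary, since the Kummer count (or simply the fact that $n$ is the only multiple of $p$ among $n,\dots,n-p+1$) already gives $v_p\binom np=a-1$.
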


\begin{proof}
Write $a=v_p(n)$ and $b=v_p(k)$. Directly selecting how many points of the
unique $2p$-cycle lie in the $n$-part gives
\begin{equation}\label{eq:witness-sum}
 D_p=[X_{2p}X_1^{N-2p}](C_N-C_nC_k)
 =\frac1{2p}\sum_{i=1}^{2p-1}\binom{2p}{i}(n)_i(k)_{2p-i}.
\end{equation}
We first treat the case that $p$ is odd. For $1\le i\le2p-1$,
$v_p\binom{2p}{i}=0$ when $i=p$ and equals $1$ otherwise. Every falling
factorial in \eqref{eq:witness-sum} contains its initial factor, so every noncentral term,
after division by $2p$, has valuation at least $a+b$. The central term has
valuation exactly
\[
 -1+v_p((n)_p)+v_p((k)_p)=a+b-1,
\]
because among $n,n-1,\ldots,n-p+1$ only $n$ is divisible by $p$, and
similarly for $k$. It is therefore the unique term of minimal valuation.

For $p=2$, the central term $i=2$ has valuation $a+b-1$. In the term $i=1$,
$(k)_3=k(k-1)(k-2)$ has valuation at least $b+1$, and the term $i=3$ is
symmetric, so both outer terms consequently have strictly larger valuation.
Hence $v_p(D_p)=a+b-1$ in all cases.
\end{proof}

The next inequality is the arithmetic heart of the lower bound.  Informally, it says that a binomial coefficient and a falling factorial, taken together, cannot have small $p$-adic valuation when $n$ and $k$ are both divisible by $p$.  It will be applied with $i$ and $j$ equal to the numbers of points of the two sets lying on cycles that mix them.  For instance, for $n=k=2$, $p=2$, $i=j=2$ the left side is $v_2(2!\,2!)-1=1=a+b-1$.  Its proof is elementary (Legendre's formula) and is given in Appendix~\ref{app:technical}.

\begin{lemma}[Mixed-support factorial inequality]\label{lem:mixed-support-factorial}
Let $p$ be prime, let $p^a\mid n$ and $p^b\mid k$, and let
$1\le i\le n$, $1\le j\le k$.  Then
\[
 v_p\!\left(\frac{n!k!}{(n-i)!(k-j)!}\right)
 -\min\{v_p(i!),v_p(j!)\}\ge a+b-1.
\]
\end{lemma}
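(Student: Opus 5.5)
The plan is to separate the falling factorials into a factorial times a binomial coefficient, bound the binomial coefficients by Kummer-type divisibility, and leave a purely one-variable inequality for $v_p(i!)$.

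\emph{Step 1: factor the falling factorials.} Since $n!/(n-i)!=i!\binom ni$ and $k!/(k-j)!=j!\binom kj$, the left side of Lemma~\ref{lem:mixed-support-factorial} equals
\[
 v_p(i!)+v_p(j!)-\min\{v_p(i!),v_p(j!)\}+v_p\tbinom ni+v_p\tbinom kj
 =\max\{v_p(i!),v_p(j!)\}+v_p\tbinom ni+v_p\tbinom kj .
\]
This removes the minimum and reduces everything to estimates on binomial coefficients and on single factorials.

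\emph{Step 2: bound the binomial coefficients.} For $1\le i\le n$ we have the integer identity $i\binom ni=n\binom{n-1}{i-1}$. Hence $v_p\binom ni\ge v_p(n)-v_p(i)\ge a-v_p(i)$, and likewise $v_p\binom kj\ge b-v_p(j)$. Writing $s=v_p(i)$ and $t=v_p(j)$, it therefore suffices to prove
\[
 \max\{v_p(i!),v_p(j!)\}\ge s+t-1 .
\]

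\emph{Step 3: the one-variable inequality.} By symmetry assume $s\ge t$, so it is enough to show $v_p(i!)\ge 2s-1$. Since $p^s\mid i$ we have $i\ge p^s$, and by Legendre's formula $v_p(i!)\ge v_p(p^s!)=1+p+\cdots+p^{s-1}$. It remains to check $1+p+\cdots+p^{s-1}\ge 2s-1$ for all $s\ge0$.
\begin{itemize}
\item For $s=0$ this reads $0\ge -1$.
\item For $s\ge1$ every term $p^r$ with $r\ge1$ is at least $2$, so the sum is at least $1+2(s-1)=2s-1$.
\end{itemize}
This holds uniformly in $p$, including $p=2$.

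The only step needing any care is Step~3. The point is that the loss of $v_p(i)+v_p(j)$ in Step~2 is paid for by the factorial, because a large $p$-power dividing $i$ forces $v_p(i!)$ to grow roughly like $p^{s-1}$ rather than like $s$. The bound becomes tight only in the case $s=t=1$, $p=2$, $i=j=2$, which matches the model example $n=k=2$ given before the lemma.
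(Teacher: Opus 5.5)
Your proof is correct. It rests on the same two facts as the paper's: the bound $v_p\binom ni\ge a-v_p(i)$ coming from $i\binom ni=n\binom{n-1}{i-1}$, and Legendre's estimate $v_p(i!)\ge v_p((p^c)!)=1+p+\cdots+p^{c-1}$ with $c=v_p(i)$.

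Where you differ is in how the symmetry is broken.

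\textbf{The paper's route.} It assumes $v_p(i!)\le v_p(j!)$ from the start and keeps $(k)_j$ whole, using $v_p((k)_j)\ge b+v_p((j-1)!)$. This leaves the mixed implication $v_p(i!)\le v_p(j!)\Rightarrow v_p(i)\le v_p((j-1)!)+1$. Proving it takes a contradiction argument that compares $j\le p(c-1)$ with $j\ge p^{v_p(j)}$. Since $v_p((j-1)!)=v_p(j!)-v_p(j)$, that implication is exactly your reduced inequality $\max\{v_p(i!),v_p(j!)\}\ge v_p(i)+v_p(j)-1$, in the case where the maximum is $v_p(j!)$.

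\textbf{Your route.} You first write the left side as the maximum plus two binomial valuations. This makes it visible that the reduced inequality is symmetric in $i$ and $j$. So you may order by $v_p(i)\ge v_p(j)$ rather than by the factorial valuations, and the one-variable bound $v_p(i!)\ge 2v_p(i)-1$ then finishes in a line. Your route is shorter and avoids the case analysis entirely; the paper's ordering buys nothing extra here.

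\textbf{A side remark to correct.} Your closing claim that the bound is tight only at $p=2$, $i=j=2$ is inaccurate, though it plays no role in the proof. Your Step~3 is already an equality at $s=1$ for every prime, and at $s=2$ for $p=2$. For instance:
\begin{itemize}
\item $n=k=i=j=p$ with $p$ odd gives left side $2-1=1=a+b-1$;
\item $n=k=i=j=4$ with $p=2$ gives left side $6-3=3=a+b-1$.
\end{itemize}
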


\begin{proof}[Proof of Theorem~\ref{thm:optimal-divisor}]
\emph{Idea.}  Each coefficient of the defect \eqref{eq:defect} counts permutations of $n+k$ points that mix a set $P$ of $n$ points with a set $Q$ of $k$ points.  The group $S_P\times S_Q$ acts on these permutations by conjugation, and the size of an orbit is the order of the group divided by the order of the centralizer of the permutation.  A permutation that mixes $P$ and $Q$ has a small centralizer, because an element of the centralizer is determined by what it does on a few points of the mixed cycles.  Lemma~\ref{lem:mixed-support-factorial} then shows that every orbit has size divisible by the required power of $p$.

The upper bound prime by prime is supplied by
Proposition~\ref{prop:prime-witness} for primes common to $n$ and $k$, and by
Proposition~\ref{prop:content-bounds} for primes dividing only one index.
It remains to prove the lower bound at a common prime $p$.  We turn to the details.

Fix a set $\Omega=P\sqcup Q$ with $|P|=n$, $|Q|=k$, and put
$H=S_P\times S_Q$.  For each cycle type $\lambda\vdash n+k$, the coefficient
$[X_\lambda](C_{n+k}-C_nC_k)$ is the number of permutations of type
$\lambda$ that do not preserve $P$ and $Q$ separately, because $[X_\lambda]C_{n+k}$ counts all permutations of $\Omega$ of type $\lambda$, while $[X_\lambda](C_nC_k)$ counts the pairs of a permutation of $P$ and a permutation of $Q$ whose cycle types combine to $\lambda$, that is, the permutations of type $\lambda$ that preserve both $P$ and $Q$.  The set of non-preserving permutations is stable
under conjugation by $H$.

For such a permutation $\sigma$, let $M$ be the union of its cycles meeting
both $P$ and $Q$, and put $i=|M\cap P|$, $j=|M\cap Q|$.  Then
$1\le i\le n$ and $1\le j\le k$.  An element of the centralizer $Z_H(\sigma)$ commutes with $\sigma$, so its action on a mixed cycle is determined by where it sends one point of that cycle.  Hence its restriction to the mixed cycles is determined by its action on $M\cap P$ (which has $i$ points), and also by its action on $M\cap Q$ (which has $j$ points), so the group of these restrictions has order dividing both $i!$ and $j!$, and its $p$-adic valuation is at most $\min\{v_p(i!),v_p(j!)\}$.  The remaining part of the centralizer acts on the complement of $M$, which has $n-i$ points in $P$ and $k-j$ points in $Q$, and contributes at most $(n-i)!(k-j)!$.  Hence
\begin{equation}\label{eq:centralizer-bound}
 v_p(|Z_H(\sigma)|)
 \le v_p((n-i)!)+v_p((k-j)!)
 +\min\{v_p(i!),v_p(j!)\}.
\end{equation}
Lemma~\ref{lem:mixed-support-factorial} applied to \eqref{eq:centralizer-bound} therefore gives
\begin{equation}\label{eq:orbit-bound}
 v_p(|H:Z_H(\sigma)|)\ge v_p(n)+v_p(k)-1.
\end{equation}
By \eqref{eq:orbit-bound}, every $H$-orbit in the coefficient count has this divisibility, so every
coefficient of the defect is divisible by the predicted common-prime power.
Together with the already sharp upper bounds, this proves
\eqref{eq:content-formula}.

\emph{A small case.}  For $n=k=2$ take $P=\{1,2\}$, $Q=\{3,4\}$ and $\sigma=(1\,3)(2\,4)$, so $M=\Omega$ and $i=j=2$.  Here $H=\{e,(12),(34),(12)(34)\}$ has order $4$, and only $e$ and $(12)(34)$ commute with $\sigma$, so the orbit of $\sigma$ has size $2=2^{1+1-1}$, as predicted.  This completes the proof of Theorem~\ref{thm:optimal-divisor}.
\end{proof}

\section{Lifting the shift congruence at every prime}\label{sec:lift}

In this section we prove Theorem~\ref{thm:strong-all-primes}, and it depends on the exact divisor of Theorem~\ref{thm:optimal-divisor} for pairs of blocks and on Lemma~\ref{lem:connected-divisible} for connected pieces.

The shift congruence says that $A(n+k)\equiv A(n)A(k)\pmod k$.  Taking $n=k$ and repeating, we get $A(pk)\equiv A(k)^p\pmod k$ for every $p$, so going from $k$ to $pk$ acts like raising to the $p$-th power, at least modulo $k$.

We now examine how far this persists when $p$ divides $k$.  For the involution numbers $1,1,2,4,10,26,76,232,764,2620,\ldots$ with $p=3$ and $k=3$ we have $A(9)=2620$ and $A(3)^3=4^3=64$, so
\[
 A(9)-A(3)^3=2556=3^2\cdot 284 .
\]
The shift congruence only promises divisibility by $k=3$, but the difference is divisible by $3^2$.  For $p=2$ and $k=2$ we get $A(4)-A(2)^2=10-4=6$, divisible by $2$.  The general rule is the following (Theorem~\ref{thm:intro-lift}, restated).

\begin{theorem}[Lifting at every prime]\label{thm:strong-all-primes}
Let $p$ be a prime, let $k\ge1$ be divisible by $p$, and put $s=v_p(k)$.  Then every coefficient of the polynomial $C_{pk}-C_k^{\,p}$ is divisible by
\[
 p^{2s}\quad\text{if }p\text{ is odd},\qquad p^{2s-1}\quad\text{if }p=2 .
\]
Consequently, for every integral weight sequence $w$,
\[
 A_w(pk)\equiv A_w(k)^p\pmod{p^{2s}}\ (p\text{ odd}),\qquad
 A_w(2k)\equiv A_w(k)^2\pmod{2^{2s-1}} .
\]
\end{theorem}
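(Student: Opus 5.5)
The plan is to cut a permutation of $pk$ points along its ``block graph'' and sort the resulting pieces by how many blocks they touch. Fix $\Omega=B_1\sqcup\cdots\sqcup B_p$ with $|B_t|=k$. For a permutation $\sigma$ of $\Omega$, join $B_t$ and $B_u$ whenever some cycle of $\sigma$ meets both. The connected components of this graph give a set partition $\pi$ of $\{1,\ldots,p\}$. For $S\subseteq\{1,\ldots,p\}$ with $|S|=m$, let $E_m$ be the weighted count of permutations of $\bigcup_{t\in S}B_t$ whose block graph is connected. Its weight monomials are those of the universal polynomials, and it depends only on $m$. Since weights are multiplicative over cycles, there is a coefficientwise identity
\[
 C_{pk}=\sum_{\pi}\prod_{S\in\pi}E_{|S|},\qquad E_1=C_k,\quad E_2=C_{2k}-C_k^2=D_{k,k}.
\]
Hence $C_{pk}-C_k^{\,p}$ is the sum over all $\pi$ other than the partition into singletons.

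Next I bound each surviving term.

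\emph{Terms with a part of size at least $3$.} I will prove (this should be Lemma~\ref{lem:connected-divisible}) that for $m\ge3$ every coefficient of $E_m$ is divisible by $p^{2s}$. The argument copies the orbit count in the proof of Theorem~\ref{thm:optimal-divisor}. Let $H=\prod_{t}S_{B_t}$ act by conjugation on the connected permutations of a fixed type. Let $M$ be the union of the mixed cycles, and put $i_t=|M\cap B_t|$. Connectedness of the block graph gives $i_t\ge1$ for every $t$. An element of the centralizer is determined on the mixed cycles by its action on $M\cap B_t$, for any single $t$. So, exactly as in \eqref{eq:centralizer-bound}, its $p$-part is at most $\sum_t v_p((k-i_t)!)+\min_t v_p(i_t!)$. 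The orbit valuation is therefore at least $\sum_t v_p(k!/(k-i_t)!)-\min_t v_p(i_t!)$. I would handle this by applying Lemma~\ref{lem:mixed-support-factorial} to two of the blocks, which gives $\ge 2s-1$. The remaining blocks each contribute $v_p(k!/(k-i_t)!)\ge v_p(k)=s\ge1$, since the falling factorial starts with the factor $k$. The total is at least $3s-1\ge 2s$. The one thing to check carefully is that the centralizer bound really uses only the minimum over the blocks and not a sum. It does, because fixing the image of one point on each mixed cycle determines the whole restriction.

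\emph{Terms whose parts have size at most $2$, with at least two pairs.} By Theorem~\ref{thm:optimal-divisor}, $E_2=D_{k,k}$ has content $k^2/p$, whose $p$-adic valuation is $2s-1$. A product containing two factors $E_2$ is therefore divisible by $p^{4s-2}$, and $4s-2\ge 2s$ because $s\ge1$.

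\emph{Terms with exactly one pair.} There are $\binom p2$ such partitions, and each contributes $E_2C_k^{\,p-2}$. Together they give $\binom p2 D_{k,k}C_k^{\,p-2}$. For odd $p$ the prime $p$ divides $\binom p2$, so this is divisible by $p\cdot p^{2s-1}=p^{2s}$. For $p=2$ the factor $\binom22=1$ gives only $2^{2s-1}$, which is exactly the announced exponent. When $p=2$ there are no terms with a part of size $3$ and no terms with two pairs, so the one-pair term is the whole difference.

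Summing the three cases proves the theorem, and substituting $X_j=w_j$ gives the congruences for $A_w$. I expect the main obstacle to be the connected-piece lemma for $m\ge3$. Two points need care there. The first is stating the exponential-formula decomposition precisely at the level of coefficients; it is cleanest to count permutations of a fixed cycle type directly instead of working with generating functions. The second is verifying the centralizer bound when different mixed cycles meet different subsets of the blocks. If the $\min$ bound turns out delicate, my fallback is a weaker estimate that suffices here: bound the centralizer's action on the mixed part by $i_1!$ alone, and then apply Lemma~\ref{lem:mixed-support-factorial} to blocks $1$ and $2$ with the third block supplying the extra factor $p^s$.
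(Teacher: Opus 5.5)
Your decomposition along the block graph, the bound on partitions with at least two pairs, and the one-pair term $\binom p2 D_{k,k}C_k^{\,p-2}$ match the paper's proof. (One slip: the content of $D_{k,k}$ is $k^2/\operatorname{rad}(k)$, not $k^2/p$. Only its $p$-adic valuation $2s-1$ is used, so this is harmless.)

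\textbf{The gap} is in the estimate for connected pieces on $m\ge3$ blocks. You claim that an element of $Z_H(\sigma)$ is determined on the mixed cycles by its action on $M\cap B_t$, for any single $t$. This holds for two blocks, where every mixed cycle meets both sets, but it is false for $m\ge3$. A mixed cycle can join two other blocks and avoid $B_t$, so knowing $h$ on $M\cap B_t$ only fixes $h$ on the mixed cycles through $B_t$.

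Concretely, take $p=3$, $k=6$, blocks $\{x_l\}$, $\{y_l\}$, $\{z_l\}$, and $\sigma=(x_1\,y_1)(y_2\,z_2)(y_3\,z_3)\cdots(y_6\,z_6)$. This $\sigma$ is connected with $(i_1,i_2,i_3)=(1,6,5)$. Your bound gives $v_3|Z_H(\sigma)|\le v_3(5!)+0=1$. In fact $Z_H(\sigma)\cong S_5\times S_5$ (permute $x_2,\ldots,x_6$, and permute the pairs $(y_l,z_l)$ simultaneously), which has $3$-adic valuation $2$. Your fallback fails on the same example, since $i_1!=1$ while the mixed part carries an $S_5$.

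A correct version of the claim needs a set $T$ of blocks meeting every mixed cycle. It gives $v_p|Z_H(\sigma)|\le\sum_t v_p((k-i_t)!)+\sum_{t\in T}v_p(i_t!)$, and this is too weak. For $p=k=3$ and $\sigma=(x_1\,y_1)(y_2\,z_2\,y_3)(x_2\,z_1\,x_3\,z_3)$, every admissible $T$ has at least two blocks. The resulting orbit bound is only $3-2=1<2s$, although the true orbit valuation is $3$.

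\textbf{The missing idea} is to let a smaller group act. The paper uses only the rotations $G=(\Z/k)^m$, one $k$-cycle per block (Lemma~\ref{lem:connected-divisible}). Suppose $\rho\in G$ commutes with a connected $g$, and $g$ sends a point of block $a$ into block $b\ne a$. Then $\rho_a=0$ forces $\rho_b=0$, and conversely, by applying the same argument to $g^{-1}$. By connectedness, $\rho\mapsto\rho_1$ is then injective on the stabilizer. So every $G$-orbit of connected permutations has size divisible by $k^{m-1}$, whose $p$-adic valuation $(m-1)s$ is at least $2s$ once $m\ge3$.

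Freeness of the rotations on each block is exactly what makes ``determined by one coordinate'' true; the full symmetric groups lack it. This also repairs your $H$-framework, since every $H$-orbit is a disjoint union of $G$-orbits. With this lemma in place, the rest of your argument goes through as written.
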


For involutions, the theorem gives $9\mid A(9)-A(3)^3$ and (with $p=2$, $k=4$) $2^3\mid A(8)-A(4)^2=664$, both of which are true.

\subsection{Idea of the proof}

Think of $pk$ points arranged in $p$ blocks of $k$ points each.  The number $A(k)^p$ counts permutations that keep every point inside its own block (this is the product $C_k^p$).  The number $A(pk)$ counts all permutations, so $A(pk)-A(k)^p$ counts the permutations that move at least one point from one block to another.

We split such a permutation into independent pieces.  Draw a graph whose vertices are the $p$ blocks, joining two blocks whenever the permutation sends some point of one block into the other.  The connected components of this graph cut the permutation into pieces that do not interact.  A piece that involves $j$ blocks and is connected is called a \emph{connected permutation of $j$ blocks}.  It turns out that:
\begin{itemize}
\item a connected piece involving $j\ge3$ blocks always contributes a multiple of $k^{2}$ (in fact $k^{j-1}$), because the rotations inside each block act on such pieces almost freely,
\item the only pieces that can contribute less are pairs of blocks, and these are measured exactly by the two-term defect $C_{2k}-C_k^2$ of Section~\ref{sec:content},
\item there are $\binom p2$ ways to choose which two blocks interact, and $\binom p2$ has one extra factor of $p$ when $p$ is odd.  That extra factor is the whole difference between odd primes and $2$.
\end{itemize}
The model case to consider is $k=p$, where $s=1$ and the theorem asserts divisibility of $C_{p^2}-C_p^{\,p}$ by $p^2$ for odd $p$ (as in the case $p=k=3$ above).  We now make this precise.

\subsection{Connected permutations of blocks}

For a finite set $S$ and a permutation $g$ of $S$, write
\[
 \operatorname{wt}(g)=\prod_{c\text{ a cycle of }g}X_{|c|}\in\Z[X_1,X_2,\ldots]
\]
for its weight monomial.  For example, if $g=(1\,2)(3)$ then $\operatorname{wt}(g)=X_2X_1$.
By Proposition~\ref{prop:cycle-index}, $C_n=\sum_{g\in\mathrm{Sym}(S)}\operatorname{wt}(g)$ for any set $S$ with $n$ elements.

Fix $k\ge1$ and $j\ge1$.  Let $S_{j,k}=\{1,\ldots,j\}\times\Z/k$, which we think of as $j$ blocks of $k$ points.  For a permutation $g$ of $S_{j,k}$, the \emph{block graph} of $g$ has the vertices $1,\ldots,j$, and it joins $a\ne b$ if $g$ sends some point $(a,x)$ to a point $(b,y)$.  We call $g$ \emph{connected} if its block graph is connected, and we set
\[
 \mathrm{Conn}_j(k)=\sum_{g\text{ connected}}\operatorname{wt}(g)\in\Z[X_1,X_2,\ldots].
\]
For $j=1$ every permutation of the single block is connected, so $\mathrm{Conn}_1(k)=C_k$.

\begin{example}
Let $p=3$ and $k=2$, with three blocks $\{1,2\}$, $\{3,4\}$, $\{5,6\}$.  The permutation $g=(1\,3)(2\,4)$ fixes $5$ and $6$ and swaps the first two blocks, so its block graph has a single edge, between blocks $1$ and $2$.  It is therefore a connected permutation of those two blocks, followed by a (trivial) connected permutation of the third block, and $\operatorname{wt}(g)=X_2^2X_1^2$.  A permutation such as $(1\,3\,5)(2\,4\,6)$ moves points from every block to another, so its block graph is connected on all three blocks.
\end{example}

\begin{lemma}[Cutting a permutation into connected pieces]\label{lem:connected-decomposition}
For all $m,k\ge1$,
\begin{equation}\label{eq:connected-decomposition}
 C_{mk}=\sum_{\pi}\ \prod_{B\in\pi}\mathrm{Conn}_{|B|}(k),
\end{equation}
where $\pi$ runs over all set partitions of the $m$ blocks $\{1,\ldots,m\}$.  In particular
\begin{equation}\label{eq:conn-two}
 C_{2k}=C_k^2+\mathrm{Conn}_2(k),\qquad\text{so}\qquad \mathrm{Conn}_2(k)=C_{2k}-C_k^2 .
\end{equation}
\end{lemma}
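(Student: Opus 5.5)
I will prove \eqref{eq:connected-decomposition} by a bijection at the level of weighted permutations, using $C_{mk}=\sum_{g\in\mathrm{Sym}(S_{m,k})}\operatorname{wt}(g)$ from Proposition~\ref{prop:cycle-index}.

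\emph{Step 1: a permutation determines a set partition.}  For each permutation $g$ of $S_{m,k}$, let $\pi(g)$ be the set partition of $\{1,\ldots,m\}$ into the connected components of its block graph.  For each component $B$, let $S_B=B\times\Z/k$.  I claim that $g$ maps $S_B$ into itself.  Indeed, if $(a,x)\in S_B$ and $g(a,x)=(b,y)$ with $b\ne a$, then $a$ and $b$ are adjacent in the block graph, so $b\in B$.  Since $S_B$ is finite and $g$ is injective, $g$ restricts to a permutation $g_B$ of $S_B$.  Every cycle of $g$ therefore lies inside a single $S_B$, so $\operatorname{wt}(g)=\prod_{B}\operatorname{wt}(g_B)$.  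Moreover, the block graph of $g_B$ on the vertex set $B$ is the induced subgraph of the block graph of $g$, which is connected.  Hence $g_B$ is a connected permutation of $|B|$ blocks.

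\emph{Step 2: the inverse map.}  Conversely, fix a set partition $\pi$ and, for each $B\in\pi$, a connected permutation $g_B$ of $S_B$.  Their union $g$ is a permutation of $S_{m,k}$.  Its block graph is the disjoint union of the block graphs of the $g_B$, because no edges join different parts.  So the components of the block graph of $g$ are exactly the parts of $\pi$, and $\pi(g)=\pi$.

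The two maps are mutually inverse.  Summing $\operatorname{wt}(g)$ over all $g$, grouped by $\pi(g)$, gives $\sum_\pi\prod_{B\in\pi}(\text{sum of weights of connected permutations of }S_B)$.

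\emph{Step 3: relabeling.}  It remains to check that the weight sum over connected permutations of $S_B$ equals $\mathrm{Conn}_{|B|}(k)$.  Relabel $B$ order-preservingly as $\{1,\ldots,|B|\}$.  This is a bijection $S_B\to S_{|B|,k}$ that preserves blocks, so conjugating by it preserves both connectivity and cycle type.  This gives \eqref{eq:connected-decomposition}.

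For \eqref{eq:conn-two}, take $m=2$.  There are two set partitions, $\{\{1\},\{2\}\}$, which contributes $\mathrm{Conn}_1(k)^2=C_k^2$, and $\{\{1,2\}\}$, which contributes $\mathrm{Conn}_2(k)$.

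There is no real obstacle.  The only point needing care is the invariance in Step 1, namely that a component's point set is $g$-stable.  This uses that edges are defined through images under $g$ alone, and finiteness then upgrades $g(S_B)\subseteq S_B$ to $g(S_B)=S_B$.
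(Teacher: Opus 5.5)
Your proof is correct and follows the same route as the paper: you group permutations by the connected components of their block graph, restrict $g$ to each invariant union $B\times\Z/k$ to get a connected piece, reassemble pieces conversely, and conclude using multiplicativity of $\operatorname{wt}$ and its invariance under relabeling blocks. You spell out two points the paper leaves implicit, namely that each such union is $g$-stable and that the induced block graph on a component is connected, but the argument is the paper's.
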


\begin{proof}
Let $g$ be any permutation of the $m$ blocks of size $k$.  The connected components of its block graph form a set partition $\pi$ of $\{1,\ldots,m\}$.  Since $g$ never sends a point outside the union of the blocks in a component, $g$ restricts to a permutation of each of these unions, and the restriction is connected.  Conversely, choosing a connected permutation of each union produces a permutation whose block graph has exactly the components $\pi$.  Finally, $\operatorname{wt}$ is multiplicative over pieces, and $\operatorname{wt}$ of a connected permutation of $|B|$ blocks does not depend on how the blocks are named.  Summing over $g$ gives the formula.
\end{proof}

\begin{lemma}[Connected pieces are highly divisible]\label{lem:connected-divisible}
Every coefficient of $\mathrm{Conn}_j(k)$ is divisible by $k^{j-1}$.
\end{lemma}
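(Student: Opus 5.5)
We let the group $G=(\Z/k)^j$ act on the connected permutations of $S_{j,k}$ by conjugation, with the $a$-th factor rotating block $a$ by $x\mapsto x+1$ and fixing the other blocks.  Conjugation preserves the cycle type and hence the weight monomial.  It also preserves the block graph, because rotations act inside blocks, so connectedness is preserved as well.  The coefficient of each monomial in $\mathrm{Conn}_j(k)$ is therefore a sum of orbit sizes.  It suffices to show that every stabilizer has order dividing $k$, so that every orbit has size divisible by $k^{j}/k=k^{j-1}$.

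The key step is to identify the stabilizer.  Suppose $h=(t_1,\ldots,t_j)\in G$ commutes with a connected $g$, and let $a,b$ be adjacent in the block graph, so that $g(a,x)=(b,y)$ for some $x,y$.  Then
\[
 g\,h(a,x)=h\,g(a,x),\qquad\text{that is,}\qquad g(a,x+t_a)=(b,y+t_b).
\]
We want to conclude $t_a=t_b$, but this does not follow at once.  The argument of Remark~\ref{rem:group-action} used a rotation that fixes the source point.

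To handle this, we work with the diagonal subgroup $\Delta=\{(t,\ldots,t)\}\cong\Z/k$ and the quotient $G/\Delta$.  We reduce to the following claim: if $h\in\mathrm{Stab}(g)$ and $t_a=0$ for some $a$, then $h=e$.  Given the claim, $\mathrm{Stab}(g)$ meets the kernel of the projection $h\mapsto t_a$ trivially.  So $\mathrm{Stab}(g)$ embeds into $\Z/k$ and its order divides $k$.

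To prove the claim, we propagate along the edges of the block graph.  Suppose $t_a=0$, so $h$ fixes every point of block $a$ pointwise, and let $g(a,x)=(b,y)$.  Then $h(b,y)=h\,g(a,x)=g\,h(a,x)=g(a,x)=(b,y)$, so $h$ fixes the point $(b,y)$.  A rotation of block $b$ with a fixed point is trivial, so $t_b=0$.  If instead the edge runs the other way, $g(b,y)=(a,x)$, we use $g^{-1}$, which also commutes with $h$, and get the same conclusion.  Since the block graph is connected, induction along a spanning tree gives $t_c=0$ for every block $c$, so $h=e$.

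Hence every $G$-orbit on connected permutations has size $k^j/|\mathrm{Stab}|$, which is divisible by $k^{j-1}$.  Each coefficient of $\mathrm{Conn}_j(k)$ is a sum of such orbit sizes, which proves the lemma.  The main point to get right is the observation above: the stabilizer need not be trivial, because a diagonal rotation can commute with $g$, but it always injects into a single factor $\Z/k$.  This is exactly why the exponent is $j-1$ and not $j$.
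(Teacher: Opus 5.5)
Your proof is correct and matches the paper's argument: the same $(\Z/k)^j$ rotation action by conjugation, the same propagation of $t_a=0$ along each edge of the block graph (using $g^{-1}$ for edges in the other direction), and the same conclusion that the stabilizer injects into one factor $\Z/k$. The mention of the diagonal subgroup $\Delta$ and $G/\Delta$ is never used and can be dropped; the stabilizer need not lie in $\Delta$ anyway, since for $g(1,x)=(2,-x)$, $g(2,y)=(1,-y)$ it is the anti-diagonal.
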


\begin{proof}
Let $G=(\Z/k)^j$ act on $S_{j,k}$ by rotating each block, so that $\rho=(\rho_1,\ldots,\rho_j)$ sends $(a,x)$ to $(a,x+\rho_a)$.  Conjugation by $\rho$ preserves the block graph and the cycle lengths, so $G$ acts on the set of connected permutations and $\operatorname{wt}$ is constant on each orbit.  It therefore suffices to show that every orbit has size divisible by $k^{j-1}$.

Let $g$ be connected and let $\rho$ commute with $g$ (that is, $\rho$ lies in the stabilizer of $g$).  Suppose $g(a,x)=(b,y)$ with $b\ne a$.  Then $\rho\,g(a,x)=g\,\rho(a,x)$ reads $(b,y+\rho_b)=g(a,x+\rho_a)$.  If $\rho_a=0$, the right side is $g(a,x)=(b,y)$, so $\rho_b=0$ as well.  The same argument applied to $g^{-1}$ (which also commutes with $\rho$) shows that $\rho_b=0$ forces $\rho_a=0$.  Thus along every edge of the block graph, one coordinate vanishes if and only if the other does, and since the graph is connected, if one coordinate of $\rho$ vanishes then all do.  Hence the map $\rho\mapsto \rho_1$ from the stabilizer to $\Z/k$ is injective, and the stabilizer has order dividing $k$.  By orbit--stabilizer, the orbit of $g$ has size $k^j/|\mathrm{Stab}|$, which is a multiple of $k^{j-1}$.
\end{proof}

\begin{remark}
For $j=2$ the lemma gives only divisibility of $\mathrm{Conn}_2(k)$ by $k$.  The proof of the theorem uses the sharper divisibility by $k^2/\operatorname{rad}(k)$ from Theorem~\ref{thm:optimal-divisor}, and this is the only place where the exact content of Section~\ref{sec:content} is used.
\end{remark}

\subsection{Proof of the theorem}

\begin{proof}[Proof of Theorem~\ref{thm:strong-all-primes}]
By \eqref{eq:connected-decomposition} with $m=p$, and since the partition into singletons contributes $C_k^p$ by $\mathrm{Conn}_1(k)=C_k$,
\begin{equation}\label{eq:lift-sum}
 C_{pk}-C_k^{\,p}=\sum_{\pi\ne\{\{1\},\ldots,\{p\}\}}\ \prod_{B\in\pi}\mathrm{Conn}_{|B|}(k),
\end{equation}
the sum being over the set partitions $\pi$ of $\{1,\ldots,p\}$ other than the partition into singletons.  Write $\delta=\mathrm{Conn}_2(k)=C_{2k}-C_k^2$, as in \eqref{eq:conn-two}.  By Theorem~\ref{thm:optimal-divisor} with $n=k$, every coefficient of $\delta$ is divisible by $k^2/\operatorname{rad}(k)$, which has $p$-adic valuation $2s-1$.  We sort the partitions $\pi$ in \eqref{eq:lift-sum} into two kinds, and dispose of the easy kind first.

\emph{Some block of $\pi$ has size $j\ge3$.}  By Lemma~\ref{lem:connected-divisible}, the factor $\mathrm{Conn}_j(k)$ is divisible by $k^{j-1}$, hence by $k^2$, which has $p$-adic valuation $2s$.  The other factors are integer polynomials, so this term is divisible by $p^{2s}$.

\emph{Every block of $\pi$ has size $1$ or $2$, with $t\ge1$ blocks of size $2$.}  There are $N_t=\frac{p!}{2^t\,t!\,(p-2t)!}$ such partitions, and each contributes $\delta^t\,C_k^{\,p-2t}$.  If $t\ge2$, then $\delta^t$ is divisible by $p^{2(2s-1)}=p^{4s-2}$, and $4s-2\ge2s$ because $s\ge1$, so these terms are negligible.  If $t=1$, the total is
\begin{equation}\label{eq:lift-main-term}
 \binom p2\,\delta\,C_k^{\,p-2}.
\end{equation}
Here $\binom p2=\tfrac{p(p-1)}2$.  For odd $p$ this has $p$-adic valuation exactly $1$, so \eqref{eq:lift-main-term} is divisible by $p^{1+(2s-1)}=p^{2s}$.  For $p=2$ we have $\binom 22=1$, and \eqref{eq:lift-main-term} is divisible by $2^{2s-1}$.

Every term is therefore divisible by the stated power of $p$.  The statement about $A_w$ follows by evaluating the polynomials at $X_j=w_j$.  This completes the proof of Theorem~\ref{thm:strong-all-primes}.
\end{proof}

\begin{remark}[Sharpness of the exponent]\label{rem:lift-sharp}
The exponent is sharp as a statement about the polynomials.  For $p=2$ and every $s$ this is Theorem~\ref{thm:optimal-divisor}, because $C_{2k}-C_k^2$ is the defect with $n=k$, whose content is exactly $k^2/\operatorname{rad}(k)=2^{2s-1}$.  (For integer weights the true modulus can be larger, as for $k=8$, where $A_w(16)\equiv A_w(8)^2$ holds modulo $2^6$ for every integer weight sequence although the content is $2^5$, as explained in Remark~\ref{rem:poly-vs-integer}.)  For odd $p$ and every $k$ with $p\mid k$ an explicit coefficient has valuation exactly $2s$ (Proposition~\ref{prop:lift-sharp-odd} in Appendix~\ref{app:technical}).  For example, for $p=k=3$ the coefficient of $X_3X_6$ in $C_9-C_3^3$ is $9!/(3\cdot6)=20160=3^2\cdot2240$, of valuation $2=2s$.  We also checked the witness by exact computation for $p=3,5,7$ and every multiple $k\le60$ of $p$.\footnote{The program is \texttt{verify\_lift\_sharpness.py}.}
\end{remark}

The theorem lets us compute $A_w(p^r)$ modulo $p^r$ in closed form (for involutions, $A(3^r)\equiv1\pmod{3^r}$ for every $r$), which is the subject of the next section.

\section{Closed formulas for \texorpdfstring{$A_w(p^r)$}{A(p**r)} modulo \texorpdfstring{$p^r$}{p**r}}\label{sec:odd}

In this section we deduce the closed formulas of Theorem~\ref{thm:intro-closed} from Theorem~\ref{thm:strong-all-primes} by induction on $r$.  By the Chinese remainder theorem, the residue of a number modulo $n$ is determined by its residues modulo the prime powers dividing $n$.  So the most basic question about the sequence $A_w(n)\bmod n$ is what happens at $n=p^r$.  The lifting theorem of the previous section answers it by a simple induction, in which knowing the residue at $p^r$ and taking a $p$-th power gives the residue at $p^{r+1}$, one power of $p$ at a time.

\begin{lemma}\label{lem:pth-power}
Let $p$ be a prime and $r\ge1$.  If $x\equiv y\pmod{p^r}$, then $x^p\equiv y^p\pmod{p^{r+1}}$.
\end{lemma}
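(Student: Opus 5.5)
Write $y=x+tp^r$ with $t\in\Z$; the claim then follows from the binomial theorem applied to $(x+tp^r)^p$. Expanding,
\[
 y^p-x^p=\sum_{i=1}^{p}\binom pi x^{p-i}t^ip^{ri},
\]
and the plan is to check that every term on the right is divisible by $p^{r+1}$.

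For $1\le i\le p-1$ the binomial coefficient $\binom pi$ is divisible by $p$, because $p$ divides the numerator $p!$ but not $i!(p-i)!$. Such a term therefore has $p$-adic valuation at least $1+ri\ge r+1$.

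The only term not covered by this is $i=p$, namely $t^pp^{rp}$. Its valuation is at least $rp\ge 2r\ge r+1$, using $p\ge2$ and $r\ge1$.

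Summing the terms gives $p^{r+1}\mid y^p-x^p$. No step is difficult. The only point needing care is the last term, which has no factor of $p$ from a binomial coefficient, and the inequality $rp\ge r+1$ disposes of it even for $p=2$. The same computation works verbatim in $\Z_{(p)}$ or coefficientwise in $\Z[X_1,X_2,\ldots]$, which is how it is applied in the induction on $r$.
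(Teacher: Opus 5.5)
Your proof is correct and is the paper's argument: write one number as the other plus $p^rt$, expand $(\,\cdot\,)^p$ by the binomial theorem, and observe that every term beyond the leading power is divisible by $p^{r+1}$. You are more explicit than the paper about the final term $t^pp^{rp}$, which the paper leaves implicit.
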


\begin{proof}
Write $x=y+p^rt$.  Then $x^p=y^p+p\,y^{p-1}p^rt+\binom p2y^{p-2}p^{2r}t^2+\cdots$, and every term after $y^p$ is divisible by $p^{r+1}$.
\end{proof}

\subsection{Odd primes}

For odd $p$ the lift gains a full extra factor of $p$ at every step, and the induction runs cleanly.

\begin{corollary}[Residue at odd prime powers]\label{cor:odd-prime-power}
Let $p$ be an odd prime, $w$ an integral weight sequence and $r\ge1$.  Then
\[
 A_w(p^r)\equiv\bigl(w_1+(p-1)w_p\bigr)^{p^{r-1}}\pmod{p^r}.
\]
\end{corollary}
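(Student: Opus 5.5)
We argue by induction on $r$, using Proposition~\ref{prop:prime} for the base case and Theorem~\ref{thm:strong-all-primes} together with Lemma~\ref{lem:pth-power} for the inductive step.  Put $\gamma_p=w_1+(p-1)w_p$.  For $r=1$ the claim reads $A_w(p)\equiv\gamma_p\pmod p$, which is exactly Proposition~\ref{prop:prime}.

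For the step, assume $A_w(p^r)\equiv\gamma_p^{p^{r-1}}\pmod{p^r}$ for some $r\ge1$.  Apply Theorem~\ref{thm:strong-all-primes} with $k=p^r$, so that $s=r$ and $p$ is odd.  This gives
\[
 A_w(p^{r+1})\equiv A_w(p^r)^p\pmod{p^{2r}}.
\]
Since $2r\ge r+1$ for $r\ge1$, the congruence also holds modulo $p^{r+1}$.  Next apply Lemma~\ref{lem:pth-power} to the induction hypothesis, with $x=A_w(p^r)$ and $y=\gamma_p^{p^{r-1}}$.  It yields
\[
 A_w(p^r)^p\equiv\gamma_p^{p^r}\pmod{p^{r+1}}.
\]
Chaining the two congruences gives $A_w(p^{r+1})\equiv\gamma_p^{p^r}\pmod{p^{r+1}}$, which closes the induction.

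There is no real obstacle, because the lift already gains more than the single extra power of $p$ that each step needs.  The only point to check is the exponent comparison $2s\ge r+1$ with $s=r$, which holds for all $r\ge1$.  This is also where odd $p$ is used: for $p=2$ the lift gives only $2^{2r-1}$, which is enough when $r\ge2$ but not at $r=1$.  That shortfall explains why the $2$-adic case needs separate treatment.
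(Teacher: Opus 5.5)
Your proposal is correct and is essentially the paper's own proof. Both argue by induction on $r$, take the base case from Proposition~\ref{prop:prime}, and for the step combine Theorem~\ref{thm:strong-all-primes} with $k=p^r$ (using $2r\ge r+1$) with Lemma~\ref{lem:pth-power} applied to the induction hypothesis.
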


\begin{proof}
Put $\gamma\coloneqq w_1+(p-1)w_p$.  We argue by induction on $r$.

\emph{The case $r=1$.}  By Proposition~\ref{prop:prime}, $A_w(p)\equiv w_1-w_p\pmod p$, and $w_1-w_p\equiv w_1+(p-1)w_p=\gamma\pmod p$.\footnote{Only the residue of $\gamma$ modulo $p$ matters for the base, and we write it as $w_1+(p-1)w_p$ so that the formula uses an integer expression.}

\emph{From $r$ to $r+1$.}  Assume
\begin{equation}\label{eq:odd-hypothesis}
 A_w(p^r)\equiv\gamma^{p^{r-1}}\pmod{p^r}.
\end{equation}
By Lemma~\ref{lem:pth-power} applied to \eqref{eq:odd-hypothesis},
\begin{equation}\label{eq:odd-power}
 A_w(p^r)^p\equiv\gamma^{p^r}\pmod{p^{r+1}}.
\end{equation}
Theorem~\ref{thm:strong-all-primes} with $k=p^r$ gives
\begin{equation}\label{eq:odd-lift}
 A_w(p^{r+1})\equiv A_w(p^r)^p\pmod{p^{2r}},
\end{equation}
and $2r\ge r+1$.  Combining \eqref{eq:odd-lift} and \eqref{eq:odd-power}, $A_w(p^{r+1})\equiv\gamma^{p^r}\pmod{p^{r+1}}$.  This completes the proof of the corollary.
\end{proof}

\begin{example}
For involutions ($w_1=w_2=1$, the others $0$) and $p=3$ we get $\gamma=w_1+2w_3=1$, so $A(3^r)\equiv1\pmod{3^r}$ for all $r$.  This matches $A(3)=4$ and $A(9)=2620=3^2\cdot291+1$.
\end{example}

\subsection{The prime \texorpdfstring{$2$}{2}}

At $p=2$ the lift is weaker by one power of $2$ (Theorem~\ref{thm:strong-all-primes} only gives $2^{2s-1}$), and this has a visible effect, because the formula $\gamma^{2^{r-1}}$ with $\gamma=w_1+w_2$ is correct at $r=1$ and at every $r\ge4$, but it is wrong in general at $r=2$ and $r=3$ (it holds only for special weights).  So the first levels must be computed by hand, and the weight $w_4$ enters through $\beta=w_2+w_4$.  The precise statement is as follows.

\begin{theorem}[Explicit power-of-two residue]\label{thm:power-of-two}
Let $w$ be an integral weight sequence and put $\alpha=w_1+w_2$ and $\beta=w_2+w_4$.  Then
\begin{align*}
 A_w(2)&\equiv\alpha &&\pmod 2,\\
 A_w(4)&\equiv\alpha^2+2\beta &&\pmod 4,\\
 A_w(8)&\equiv\alpha^4+4\alpha^2\beta+4\beta^2 &&\pmod 8,
\end{align*}
and for every $r\ge4$
\[
 A_w(2^r)\equiv(w_1+w_2)^{2^{r-1}}\pmod{2^r}.
\]
Consequently $A_w(2^r)\bmod 2^r$ is determined by $w_1$ and $w_2$ for $r=1$ and for all $r\ge4$, and by $w_1,w_2,w_4$ for $r=2,3$.  The bare power $(w_1+w_2)^{2^{r-1}}$ fails at $r=2$ and $r=3$.
\end{theorem}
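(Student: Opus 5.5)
The plan is to get the levels $r=1,2,3$ by hand or by one application of Theorem~\ref{thm:strong-all-primes}, and then run the same induction as in Corollary~\ref{cor:odd-prime-power} starting from $r=4$.

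\emph{Base levels.} The level $r=1$ is Proposition~\ref{prop:prime} with $p=2$, since $w_1-w_2\equiv w_1+w_2\pmod 2$.

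For $r=2$ I would reduce the explicit polynomial of Example~\ref{ex:first-C} modulo $4$:
\[
C_4\equiv X_1^4+2X_1^2X_2+3X_2^2+2X_4\pmod 4.
\]
I then compare it at integer points with $\alpha^2+2\beta=w_1^2+2w_1w_2+w_2^2+2w_2+2w_4$, using three integer-point facts:
\begin{itemize}
\item $x^4\equiv x^2\pmod 4$, since odd squares are $1$ and even squares are $0$ modulo $4$;
\item $2x^2\equiv 2x\pmod 4$;
\item $3w_2^2\equiv w_2^2+2w_2\pmod 4$, by the same fact.
\end{itemize}
This is only a congruence at integer weights, not coefficientwise, which is consistent with the statement being about integral $w$. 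I expect this to be the main (if small) obstacle: one must remember that the polynomial congruence fails here, so the argument has to use these integer-point identities rather than the coefficients.

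The level $r=3$ needs no further computation. Theorem~\ref{thm:strong-all-primes} with $p=2$ and $k=4$ (so $s=2$) gives $A_w(8)\equiv A_w(4)^2\pmod{2^3}$. Lemma~\ref{lem:pth-power} upgrades the $r=2$ congruence to $A_w(4)^2\equiv(\alpha^2+2\beta)^2\pmod 8$. Expanding, $(\alpha^2+2\beta)^2=\alpha^4+4\alpha^2\beta+4\beta^2$, which is exactly the stated formula.

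\emph{Level $16$.} Theorem~\ref{thm:strong-all-primes} with $k=8$ ($s=3$) gives $A_w(16)\equiv A_w(8)^2\pmod{2^5}$, hence modulo $16$. By Lemma~\ref{lem:pth-power} this is $\equiv(\alpha^4+4\beta(\alpha^2+\beta))^2\pmod{16}$, and expanding gives
\[
\alpha^8+8\alpha^4\beta(\alpha^2+\beta)+16(\cdots).
\]
The middle term vanishes modulo $16$, by cases on the parity of $\alpha$:
\begin{itemize}
\item if $\alpha$ is even, then $8\alpha^4\equiv0\pmod{16}$;
\item if $\alpha$ is odd, then $\beta(\alpha^2+\beta)\equiv\beta(1+\beta)\equiv0\pmod 2$.
\end{itemize}
Hence $A_w(16)\equiv\alpha^8\pmod{16}$. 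This is where the $w_4$-dependence disappears.

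\emph{Induction for $r\ge4$.} Assume $A_w(2^r)\equiv\alpha^{2^{r-1}}\pmod{2^r}$. Lemma~\ref{lem:pth-power} gives $A_w(2^r)^2\equiv\alpha^{2^r}\pmod{2^{r+1}}$. Theorem~\ref{thm:strong-all-primes} with $k=2^r$ gives $A_w(2^{r+1})\equiv A_w(2^r)^2\pmod{2^{2r-1}}$, and $2r-1\ge r+1$ for $r\ge2$. Combining the two closes the induction.

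\emph{Consequences.} The determination statements are read off directly from the formulas. For the failure of the bare power at $r=2,3$, I would cite the involutions, which have $\alpha=2$ and $\beta=1$:
\begin{itemize}
\item at $r=2$: $A(4)=10\equiv2\pmod 4$, while $\alpha^2\equiv0\pmod 4$;
\item at $r=3$: $A(8)=764\equiv4\pmod 8$, matching $\alpha^4+4\alpha^2\beta+4\beta^2=36\equiv4\pmod 8$, while $\alpha^4=16\equiv0\pmod 8$.
\end{itemize}
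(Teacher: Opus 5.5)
Your proposal is correct and follows the paper's proof step by step: the base levels by direct computation, then the lift with $k=4$ and $k=8$ combined with Lemma~\ref{lem:pth-power}, the same parity case split at level $16$ (your $\alpha^2+\beta$ and the paper's $\alpha^2+3\beta$ agree after multiplying by $8$ modulo $16$), and then the same induction. The remaining differences are cosmetic and both versions are valid: you reduce $C_4$ modulo $4$ and apply integer-point identities rather than writing out the difference \eqref{eq:level-two}, and you use involutions ($A(4)=10$, $A(8)=764$) rather than $w_4=1$ as the witness that the bare power fails.
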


\begin{example}
For involutions $\alpha=w_1+w_2=2$ and $\beta=w_2+w_4=1$.  The theorem says $A(4)\equiv\alpha^2+2\beta=6\equiv2\pmod4$, and indeed $A(4)=10$.  The bare power $\alpha^2=4\equiv0\pmod4$ would be wrong, which is why the weight $w_4$ (through $\beta$) appears at this level, since the lift loses one power of $2$ here and the first cycle length beyond $1$ and $2$ that can contribute is $4$.
\end{example}

\begin{proof}
A routine calculation handles levels $2$ to $4$, and the idea is the same induction as before, with the first steps done by hand.  The reader may wish to read the odd-prime case, Corollary~\ref{cor:odd-prime-power}, first.

\emph{Levels $r=1,2$.}  By the cycle-index formula, $A_w(2)=w_1^2+w_2$ and $A_w(4)=w_1^4+6w_1^2w_2+3w_2^2+8w_1w_3+6w_4$.  For $r=1$, $A_w(2)-(w_1+w_2)=w_1^2-w_1$ is even.  For $r=2$,
\begin{equation}\label{eq:level-two}
 A_w(4)-\bigl(\alpha^2+2\beta\bigr)=w_1^2(w_1^2-1)+2w_2\,(3w_1^2-w_1-1+w_2)+8w_1w_3+4w_4 .
\end{equation}
In \eqref{eq:level-two}, $w_1^2(w_1^2-1)\equiv0\pmod4$, since $4\mid w_1^2$ for even $w_1$ and $8\mid w_1^2-1$ for odd $w_1$.  Also $3w_1^2-w_1-1+w_2\equiv w_2-1\pmod2$, so $2w_2(\cdots)\equiv2w_2(w_2-1)\equiv0\pmod4$.

\emph{Level $r=3$.}  Theorem \ref{thm:strong-all-primes} (with $p=2$ and $k=4$) gives $A_w(8)\equiv A_w(4)^2\pmod8$.  By Lemma~\ref{lem:pth-power} with $p=2$, $x\equiv y\pmod{2^r}$ implies $x^2\equiv y^2\pmod{2^{r+1}}$.  Since $A_w(4)\equiv\alpha^2+2\beta\pmod4$, we get $A_w(4)^2\equiv(\alpha^2+2\beta)^2=\alpha^4+4\alpha^2\beta+4\beta^2\pmod8$.

\emph{Level $r=4$.}  Theorem \ref{thm:strong-all-primes} (with $p=2$ and $k=8$) gives $A_w(16)\equiv A_w(8)^2\pmod{32}$, and Lemma~\ref{lem:pth-power} with the level-$3$ congruence gives $A_w(8)^2\equiv(\alpha^4+4\alpha^2\beta+4\beta^2)^2\pmod{16}$.  Expanding,
\begin{equation}\label{eq:level-four}
 (\alpha^4+4\alpha^2\beta+4\beta^2)^2\equiv\alpha^8+8\alpha^4\beta(\alpha^2+3\beta)\pmod{16},
\end{equation}
and $\alpha^4\beta(\alpha^2+3\beta)$ is even, which is clear if $\alpha$ is even, while if $\alpha$ is odd it is $\equiv\beta(1+\beta)\pmod2$.  Hence, by \eqref{eq:level-four}, $A_w(16)\equiv\alpha^8\pmod{16}$.

The bare power fails at $r=2,3$, since with $w_4=1$ and all other weights $0$ one has $A_w(4)=6\equiv2\pmod4$ and $A_w(8)=1260\equiv4\pmod8$, while $\alpha=0$.

\emph{From $r$ to $r+1$, for $r\ge4$.}  Let $r\ge4$ and assume $A_w(2^r)\equiv\alpha^{2^{r-1}}\pmod{2^r}$.  Theorem \ref{thm:strong-all-primes} (with $p=2$ and $k=2^r$) gives
\begin{equation}\label{eq:two-lift}
 A_w(2^{r+1})\equiv A_w(2^r)^2\pmod{2^{2r-1}},\qquad 2r-1\ge r+1.
\end{equation}
If $x\equiv y\pmod{2^r}$, then $x^2-y^2=(x-y)(x+y)$ with $x+y=(x-y)+2y$ even, so $x^2\equiv y^2\pmod{2^{r+1}}$.  Hence, by \eqref{eq:two-lift},
\[
 A_w(2^{r+1})\equiv A_w(2^r)^2\equiv\bigl(\alpha^{2^{r-1}}\bigr)^2=\alpha^{2^r}\pmod{2^{r+1}}.
\]
This completes the proof of Theorem~\ref{thm:power-of-two}.
\end{proof}

\begin{corollary}[Powers of two are $0$ or $1$]\label{cor:power-of-two-01}
For every integral weight sequence $w$ and every $r\ge4$,
\[
 A_w(2^r)\equiv\begin{cases}1\pmod{2^r}&\text{if }w_1+w_2\text{ is odd},\\[2pt]0\pmod{2^r}&\text{if }w_1+w_2\text{ is even.}\end{cases}
\]
\end{corollary}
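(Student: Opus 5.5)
The plan is to read this off Theorem~\ref{thm:power-of-two} directly, by splitting on the parity of $\alpha=w_1+w_2$. For every $r\ge4$ that theorem gives $A_w(2^r)\equiv\alpha^{2^{r-1}}\pmod{2^r}$. So it is enough to compute $\alpha^{2^{r-1}}$ modulo $2^r$ in the two parity cases.

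\emph{Even case.} If $\alpha$ is even, then $2^{2^{r-1}}$ divides $\alpha^{2^{r-1}}$. Since $2^{r-1}\ge r$ for every $r\ge1$ (in particular for $r\ge4$), this power is $\equiv0\pmod{2^r}$, which gives the second line of the statement.

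\emph{Odd case.} If $\alpha$ is odd, the claim reduces to the classical fact that $\alpha^{2^{r-1}}\equiv1\pmod{2^r}$, and in fact modulo $2^{r+1}$. I would prove this by induction using Lemma~\ref{lem:pth-power} with $p=2$, in the form of the factorization used in the proof of Theorem~\ref{thm:power-of-two}:
\begin{itemize}
\item The base is $\alpha^2\equiv1\pmod 8$ for odd $\alpha$, since $\alpha^2-1=(\alpha-1)(\alpha+1)$ is a product of two consecutive even numbers.
\item For the step, $x\equiv1\pmod{2^m}$ implies $x^2\equiv1\pmod{2^{m+1}}$.
\end{itemize}
This yields $\alpha^{2^{j}}\equiv1\pmod{2^{j+2}}$ for all $j\ge1$. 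Taking $j=r-1$ gives $\alpha^{2^{r-1}}\equiv1\pmod{2^{r+1}}$, hence modulo $2^r$, which is the first line of the statement.

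There is no real obstacle here, since all the depth sits in Theorem~\ref{thm:power-of-two}. The only point to watch is the restriction $r\ge4$. For $r=2,3$ the theorem's formula involves $\beta=w_2+w_4$, and the weight $w_4=1$ example in its proof shows that residues other than $0$ and $1$ do occur there. So the corollary must not be claimed for those levels.
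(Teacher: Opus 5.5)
Your proof is correct and follows the paper's argument. You reduce to $\alpha^{2^{r-1}}$ via Theorem~\ref{thm:power-of-two}, use $2^{r-1}\ge r$ in the even case, and iterate the squaring step $x\equiv1\pmod{2^m}\Rightarrow x^2\equiv1\pmod{2^{m+1}}$ in the odd case. The only difference is that you start the induction from $\alpha^2\equiv1\pmod 8$ rather than from $\alpha\equiv1\pmod 2$, which gives the stronger congruence modulo $2^{r+1}$, more than the statement needs.
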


\begin{proof}
By Theorem \ref{thm:power-of-two}, $A_w(2^r)\equiv g^{2^{r-1}}\pmod{2^r}$ with $g=w_1+w_2$.  If $g$ is even, then $2^{2^{r-1}}\mid g^{2^{r-1}}$ and $2^{r-1}\ge r$.  If $g$ is odd, then $g\equiv1\pmod2$, and $x\equiv1\pmod{2^j}$ implies $x^2\equiv1\pmod{2^{j+1}}$ by Lemma~\ref{lem:pth-power}, so $g^{2^{j-1}}\equiv1\pmod{2^j}$ for all $j\ge1$.
\end{proof}

\subsection{Eventual divisibility}

The closed formulas also determine the primes $p$ for which the sequence is eventually divisible by every power of $p$.  The answer depends only on the base $\gamma_p=w_1+(p-1)w_p$ that appears in the residue formula.

\begin{corollary}[Eventual divisibility]\label{cor:eventual-divisibility}
Let $p$ be a prime, $e\ge1$, and $\gamma_p=w_1+(p-1)w_p$.  Then $A_w(n)\equiv0\pmod{p^e}$ for all sufficiently large $n$ if and only if $p\mid\gamma_p$.  In that case, $A_w(n)\equiv0\pmod{p^r}$ for every $n\ge p^r$, whenever $r\ge e$ (and $r\ge4$ if $p=2$).
\end{corollary}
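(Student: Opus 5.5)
We prove the two directions separately. Throughout we use the closed formulas of Section~\ref{sec:odd} and the dynamics on residue classes of Corollary~\ref{cor:dynamics}. The main point is to show that $A_w(p^r)\equiv0\pmod{p^r}$ once $p\mid\gamma_p$, and then let the shift congruence spread this zero to all larger $n$.

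\emph{Sufficiency.} Assume $p\mid\gamma_p$ and let $r\ge e$, with $r\ge4$ if $p=2$.

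For odd $p$, Corollary~\ref{cor:odd-prime-power} gives $A_w(p^r)\equiv\gamma_p^{\,p^{r-1}}\pmod{p^r}$. The right side is divisible by $p^{p^{r-1}}$, and $p^{r-1}\ge r$ for every $p\ge2$ and $r\ge1$. Hence $A_w(p^r)\equiv0\pmod{p^r}$.

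For $p=2$ we have $\gamma_2=w_1+w_2$, and Corollary~\ref{cor:power-of-two-01} (valid for $r\ge4$) directly gives $A_w(2^r)\equiv0\pmod{2^r}$ when $w_1+w_2$ is even.

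In both cases Corollary~\ref{cor:dynamics}(iv), applied with $k=p^r$ and $c\equiv0$, gives $A_w(n)\equiv0\pmod{p^r}$ for all $n\ge p^r$. This is the second sentence of the statement. Since $p^e\mid p^r$, it also gives $p^e\mid A_w(n)$ for all $n\ge p^r$, which is the ``if'' direction.

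\emph{Necessity.} Assume $p\nmid\gamma_p$. By Proposition~\ref{prop:prime}, $A_w(p)\equiv\gamma_p\pmod p$, so $A_w(p)$ is a unit modulo $p$. Corollary~\ref{cor:iterated} with $n=0$ and $k=p$, together with $A_w(0)=1$, gives
\[
 A_w(qp)\equiv A_w(p)^q\equiv\gamma_p^{\,q}\not\equiv0\pmod p
\]
for every $q\ge0$. So infinitely many terms $A_w(qp)$ are not divisible by $p$, and hence not by $p^e$. Therefore $A_w(n)$ is not eventually divisible by $p^e$.

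No step is hard. The only care needed is at $p=2$, where the threshold $r\ge4$ is required because the formula fails at $r=2,3$ (Theorem~\ref{thm:power-of-two}). The bound $p^{r-1}\ge r$ is elementary.
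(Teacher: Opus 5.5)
Your proof is correct and follows the paper's argument. For sufficiency, the closed formulas give $A_w(p^r)\equiv0\pmod{p^r}$ and the shift congruence with $k=p^r$ propagates this zero to every $n\ge p^r$; for necessity, the unit multiplier $A_w(p)\equiv\gamma_p\pmod p$ rules out eventual divisibility. The only cosmetic difference is in the necessity direction: you show directly that $A_w(qp)\equiv\gamma_p^{\,q}\not\equiv0\pmod p$, whereas the paper runs the same shift congruence backwards to reach the contradiction $A_w(0)\equiv0\pmod p$.
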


\begin{proof}
Suppose $p\mid\gamma_p$ and choose $r\ge e$, with $r\ge4$ if $p=2$.  By Corollary~\ref{cor:odd-prime-power} (for odd $p$) or Theorem~\ref{thm:power-of-two} (for $p=2$), $A_w(p^r)\equiv\gamma_p^{\,p^{r-1}}\pmod{p^r}$.  Since $p\mid\gamma_p$ and $p^{r-1}\ge r$, the right side is divisible by $p^{p^{r-1}}$, hence by $p^r$, so $A_w(p^r)\equiv0\pmod{p^r}$.  The shift congruence with $k=p^r$ gives $A_w(n+p^r)\equiv A_w(n)A_w(p^r)\equiv0\pmod{p^r}$ for every $n\ge0$, so $A_w(m)\equiv0\pmod{p^r}$ for all $m\ge p^r$.

Conversely, suppose $p\nmid\gamma_p$ and that $A_w(n)\equiv0\pmod p$ for all $n\ge n_0$.  By Proposition~\ref{prop:prime}, $A_w(p)\equiv w_1-w_p\equiv\gamma_p\pmod p$ is a unit modulo $p$, and the shift congruence modulo $p$ gives $A_w(n)\equiv A_w(p)^{-1}A_w(n+p)\pmod p$.  Applying this repeatedly we find $A_w(n)\equiv0\pmod p$ for every $n\ge0$, including $n=0$, which contradicts $A_w(0)=1$.
\end{proof}

\begin{example}
For involutions and $p=2$ we have $\gamma_2=w_1+w_2=2$, which is even, so every power of $2$ divides $A(n)$ for all large enough $n$, and indeed $A(16)=46206736$ is divisible by $16$, and $A(n)\equiv0\pmod{2^r}$ for all $n\ge2^r$ once $r\ge4$.  For $p=3$ we have $\gamma_3=1$, which is not divisible by $3$, so the sequence is never eventually divisible by $3$, and indeed it is periodic modulo $3$ with the values $1,1,2$.
\end{example}

\begin{remark}[Comparison with the literature]\label{rem:BO}
Combined with Corollary~\ref{cor:odd-prime-power}, the above shows that $A_w(p^r)\equiv\bigl(w_1+(p-1)w_p\bigr)^{p^{r-1}}\pmod{p^r}$ for all primes $p$ and all $r\ge1$ except $(p,r)=(2,2),(2,3)$, and the argument needs only Theorem~\ref{thm:strong-all-primes} and the explicit values of $A_w(2)$ and $A_w(4)$.  Bellagh and Oulebsir~\cite{BellaghOulebsir2023} (whose ``cycle indicator'' is our cycle index polynomial) proved $A_w(\rho+qp)\equiv(w_1^p-w_p)^qA_w(\rho)$ modulo $q^*p$ in $\Z_p$, where $q^*=q$ for odd $q$ and $q^*=q/2$ for even $q$.  For odd $p$ the factor $2$ is a unit, so this modulus has the same $p$-adic valuation as $qp$, namely $1+v_p(q)$.  Taking $\rho=0$ and $q=p^{r-1}$ and lifting the base from $w_1^p-w_p$ to $w_1+(p-1)w_p$ by a Frobenius argument also gives Corollary~\ref{cor:odd-prime-power} for odd $p$.  For $p=2$ and $q=2^{r-1}$ the modulus $q^*p=2^{r-1}$ is one power of $2$ weaker than the $2^r$ we prove.  The proof above is self-contained and covers all primes.
\end{remark}

\section{A family of sequences from Lagrange inversion}\label{sec:lagrange}

In this section we show, using Theorem~\ref{thm:lagrange}, that the theorems above also apply to a family that does not look like weighted permutations at first sight, but is one, namely the numbers $n![x^n]\,e^{xG(x)}F(x)^n$ obtained from powers of a series.  This settles a conjecture of Bala.  The applications of Section~\ref{sec:sequences} use Sections~\ref{sec:universal} and~\ref{sec:odd}, and Bala's observations for \OEIS{A251568} and \OEIS{A251569} follow from Corollary~\ref{cor:n-n-1}.

A natural way to produce integer sequences is to take the coefficients of the $n$-th power of a series.  For example, $n!\,[x^n]\,e^x(1-x)^{-n}$ is the OEIS sequence \OEIS{A278070}, and Bala observed from computer data that it satisfies $a(n+k)\equiv a(n)\pmod k$.  He conjectured that the same holds for the whole family below, for all integer series $F$ and $G$ with $F(0)=G(0)=1$.  The family is not in general of the form $F(x)\exp(xG(x))$ treated in~\cite{Bala2017}, because the exponent $n$ sits on $F$.  Our theorem shows that these sequences are, after all, weighted-permutation counts with integer weights, so all the congruences above apply to them.

For integer power series $F,G\in\Z[[x]]$ put
\[
 b(n)=n!\,[x^n]\,e^{xG(x)}F(x)^n .
\]
By Lagrange inversion there is a unique series $x(y)\in\Z[[y]]$ with $x(y)=yF(x(y))$, and it has integer coefficients because $F$ does.

Roughly speaking, Lagrange inversion turns the generating function $\sum_nb(n)y^n/n!$ into an explicit expression in the series $x(y)$.  Its logarithm is then explicit, and the weights are read off from the logarithmic derivative, which involves only derivatives, products and inverses of integer series with constant term $1$, and therefore has integer coefficients.

\begin{theorem}[Lagrange-type sequences]\label{thm:lagrange}
Put $\theta(y)=y\,F'(x(y))\in y\Z[[y]]$ (the derivative of $F$ along $x(y)$, times $y$) and
\[
 \sum_{m\ge1}w_my^m=y\frac{d}{dy}\Bigl(x(y)G(x(y))\Bigr)+\frac{y\,\theta'(y)}{1-\theta(y)} .
\]
Then every $w_m$ is an integer and $\sum_{n\ge0}b(n)\,y^n/n!=\exp\bigl(\sum_{m\ge1}w_my^m/m\bigr)$.  Thus $b(n)=A_w(n)$ for an integral weight sequence $w$, and every result above applies to $b$.  Moreover $b(k)\equiv(F(0)G(0))^k\pmod k$ for all $k\ge1$.  In particular, if $F(0)=G(0)=1$, then
\[
 b(n+k)\equiv b(n)\pmod k\qquad\text{for all }n\ge0,\ k\ge1 .
\]
\end{theorem}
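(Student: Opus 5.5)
The plan is to compute the generating function $\sum_n b(n)y^n/n!$ in closed form by the Lagrange--Bürmann formula, read off its logarithm, and then apply Proposition~\ref{thm:canonical-classification} and Lemma~\ref{lem:a-k-mod-k}. Since $b(n)/n!=[x^n]\,e^{xG(x)}F(x)^n$, the series $\sum_n b(n)y^n/n!$ is the \emph{ordinary} generating function $\sum_n y^n[x^n]H(x)F(x)^n$ with $H(x)=e^{xG(x)}$.

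\emph{Step 1: the closed form.} The form of Lagrange inversion I will use is
\[
 \sum_{n\ge0}y^n[x^n]H(x)F(x)^n=\frac{H(x(y))}{1-yF'(x(y))},
\]
where $x(y)=yF(x(y))$. Applied here it gives
\[
 \sum_n b(n)\frac{y^n}{n!}=\frac{\exp\bigl(x(y)G(x(y))\bigr)}{1-\theta(y)}.
\]
I would check this identity as a formal statement over $\Z[[y]]$. One route is to prove it first for $F(0)\ne0$ by a residue/change-of-variables computation, and then extend it to arbitrary $F$ by noting that both sides are polynomial identities in the coefficients of $F$ and $G$.

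The integrality facts needed are elementary. The equation $x=yF(x)$ is solved coefficient by coefficient, so $x(y)\in y\Z[[y]]$. It follows that $\theta(y)=yF'(x(y))\in y\Z[[y]]$, and hence $1-\theta$ is invertible in $\Z[[y]]$.

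\emph{Step 2: the weights.} Taking logarithms gives
\[
 \log\sum_n b(n)\frac{y^n}{n!}=x(y)G(x(y))-\log(1-\theta(y)).
\]
By Definition~\ref{def:canonical-coordinates}, $u_m=m[y^m]\log(\cdot)=[y^m]\,y\frac{d}{dy}\log(\cdot)$. Applying $y\,d/dy$ term by term yields exactly the displayed series $y\frac{d}{dy}(xG(x))+y\theta'/(1-\theta)$. This series has integer coefficients, because it is built from integer series by products, derivatives and the inverse of $1-\theta$.

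Proposition~\ref{thm:canonical-classification} then gives $b(n)=C_n(w_1,\ldots,w_n)=A_w(n)$ with integer weights. Consequently Theorem~\ref{thm:universal} and all later results apply to $b$.

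\emph{Step 3: the value $b(k)\bmod k$.} Using $x(y)=yF(x(y))$, rewrite
\[
 x(y)G(x(y))=y\,\tilde G(y),\qquad \tilde G(y)=F(x(y))\,G(x(y))\in\Z[[y]],
\]
so that $\tilde G(0)=F(0)G(0)$. Also set $\tilde F(y)=1/(1-\theta(y))\in\Z[[y]]$, which has $\tilde F(0)=1$. The generating function is then $\tilde F(y)\exp(y\tilde G(y))$.

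Lemma~\ref{lem:a-k-mod-k} therefore gives $b(k)\equiv\tilde F(0)\tilde G(0)^k=(F(0)G(0))^k\pmod k$. When $F(0)=G(0)=1$ this says $b(k)\equiv1\pmod k$. The shift congruence $b(n+k)\equiv b(n)b(k)\pmod k$ from Theorem~\ref{thm:universal} then yields $b(n+k)\equiv b(n)\pmod k$.

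\emph{Main obstacle.} The only real obstacle is justifying the Lagrange--Bürmann identity of Step~1 in full generality, in particular when $F(0)$ is $0$ or not a unit. The rest is bookkeeping. If the general case causes trouble, I would first try the following: prove the identity over $\Q[[y]]$ with $F$ replaced by $F+t$ for a formal parameter $t$ (so the constant term is a unit in $\Q((t))$), and then set $t=0$, since both sides are polynomial in the coefficients of $F$ at each order in $y$.
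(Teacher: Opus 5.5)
Your proof is correct. Steps~1 and~2 are the paper's argument: Gessel's form of Lagrange inversion with $H=e^{xG}$, then $y\,d/dy$ applied to $x(y)G(x(y))-\log(1-\theta(y))$, together with the integrality of the result. The paper simply cites the inversion formula and handles $F(0)=0$ separately via $x(y)=0$. Your generic-coefficient argument is a fine substitute. One small slip: with $H=e^{xG}$ the identity lives in $\Q[[y]]$, not $\Z[[y]]$. This is harmless, since the identity is linear in $H$.

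The genuine difference is Step~3.

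\emph{The paper's route.} It proves $b(k)\equiv(F(0)G(0))^k\pmod k$ directly from the definition. It writes $b(k)=\sum_{i=0}^k\frac{k!}{i!}\eta_i[x^{k-i}]F^k$ and discards the terms with $i<k$. This never uses $x(y)$, so the congruence does not depend on the Lagrange identity. The cost is that it repeats the computation in the proof of Lemma~\ref{lem:a-k-mod-k}.

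\emph{Your route.} You use $x(y)=yF(x(y))$ to rewrite the closed form as $\tilde F(y)\exp(y\tilde G(y))$, where $\tilde F=1/(1-\theta)$ and $\tilde G=F(x(y))G(x(y))$. Both lie in $\Z[[y]]$, with $\tilde F(0)=1$ and $\tilde G(0)=F(0)G(0)$. You then quote Lemma~\ref{lem:a-k-mod-k}.

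This is shorter and more structural.
\begin{itemize}
\item It shows that, at the level of exponential generating functions, the Lagrange family sits inside the $F\exp(xG)$ family with constant term $1$ on the first factor.
\item It therefore reconciles the theorem's $(F(0)G(0))^k$ with the lemma's $F(0)G(0)^k$. The remark after the theorem presents these as belonging to different families.
\item It gives integrality of the weights directly from $\tilde F(0)=1$, as in the remark on Bala's class.
\item When $F(0)=G(0)=1$, the sequence $b$ already lies in Bala's 2017 class, since $\tilde F,\tilde G$ are integral with $\tilde G(0)=1$. So his 2017 theorem alone gives $b(n+k)\equiv b(n)\pmod k$.
\end{itemize}
The last point refines the paper's comment that the family is not of the form $F(x)\exp(xG(x))$. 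That comment is true of the defining formula, but not of its generating function. The price of your route is that the congruence now depends on the Lagrange identity of Step~1, whereas the paper's does not.
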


\begin{example}
Take $F=1/(1-x)$ and $G=0$.  Then $b(n)=n!\binom{2n-1}{n}$ for $n\ge1$, giving $1,\ 1,\ 6,\ 60,\ 840,\ 15120,\ \ldots$, and the theorem says that these are the numbers $A_w(n)$ for the weights $w=(1,\ 5,\ 22,\ 93,\ 386,\ 1586,\ \ldots)$.  Indeed $x(y)=y+y^2+2y^3+5y^4+\cdots$ and $\theta(y)=y+2y^2+5y^3+\cdots$, so $y\theta'(y)/(1-\theta(y))=y+5y^2+\cdots$, which gives $w_1=1$ and $w_2=5$.  Here $F(0)G(0)=0$, and indeed $b(k)\equiv0\pmod k$ for every $k$.
\end{example}

\begin{proof}
The second form of Lagrange inversion (Gessel~\cite[Theorem~2.1.1]{Gessel2016}) states that for $H\in\Q[[x]]$,
\[
 \sum_{n\ge0}y^n[x^n]\,H(x)F(x)^n=\frac{H(x(y))}{1-yF'(x(y))}.
\]
(If $F(0)=0$ then $x(y)=0$ and both sides equal $H(0)/(1-yF'(0))$, so there is nothing to prove.)  Take $H=e^{xG(x)}$.  Since $b(n)y^n/n!=y^n[x^n]HF^n$, the left side is $\Psi(y):=\sum_nb(n)y^n/n!$, and
\[
 \log \Psi(y)=x(y)G(x(y))-\log\bigl(1-\theta(y)\bigr).
\]
Applying $y\,d/dy$ gives $y\,(\log \Psi)'=\sum_mw_my^m$ with the $w_m$ as stated.  The first term is the image of an integral series under composition with $x(y)\in y\Z[[y]]$ followed by $y\,d/dy$, hence integral.  In the second term, $\theta\in y\Z[[y]]$ and $1-\theta$ has constant term $1$, so $1/(1-\theta)$ and $y\,\theta'/(1-\theta)$ are integral.  Hence $w_m\in\Z$, and $\Psi=\exp(\sum_mw_my^m/m)$ because $y(\log \Psi)'=\sum_mw_my^m$ and $\Psi(0)=1$.

For the congruence, $b(k)=\sum_{i=0}^k\frac{k!}{i!}\,\eta_i\,[x^{k-i}]F^k$ with $\eta_i=i![x^i]e^{xG}\in\Z$.  For $i<k$ the factor $k!/i!$ is divisible by $k$.  The term $i=k$ is $\eta_kF(0)^k$, and $\eta_k=\sum_{j=0}^k\frac{k!}{j!}[x^{k-j}]G^j\equiv G(0)^k\pmod k$ since $k\mid k!/j!$ for $j<k$.  Hence $b(k)\equiv(F(0)G(0))^k\pmod k$.  Theorem \ref{thm:universal} gives $b(n+k)\equiv b(n)b(k)\pmod k$, and $b(k)\equiv1\pmod k$ when $F(0)=G(0)=1$.  This completes the proof of Theorem~\ref{thm:lagrange}.
\end{proof}

\begin{remark}
For $F(0)=G(0)=1$ this is the conjecture that Bala records, for general $F,G$, in the comments to \OEIS{A278070}, where $F=(1-x)^{-1}$ and $G=1$, while Theorem \ref{thm:lagrange} covers all $F,G$ at once.\footnote{The entry also records a proof of the special case $F=(1-x)^{-1}$, $G=1$, found, according to the entry, by an automated theorem-proving system.}  The identity for $w_m$ and the congruence were also checked numerically for random integral $F,G$.\footnote{This includes the cases $F(0)\ne1$ and $F(0)=0$.}  Note that the congruence $b(k)\equiv(F(0)G(0))^k$ here differs from Lemma~\ref{lem:a-k-mod-k}, which concerns the different family $F(x)\exp(xG(x))$ and has $F(0)G(0)^k$.  The statement is formalized in Lean (Section~\ref{sec:verification}).
\end{remark}

\section{Applications to OEIS sequences and conjectures}\label{sec:sequences}

In this section we apply Theorem~\ref{thm:universal} and Corollary~\ref{cor:odd-prime-power} to OEIS sequences.  The theory explains congruences that have been noticed in the OEIS, such as comments of the form ``$a(n+k)-a(n)$ is divisible by $k$'' or ``$a(n)$ is divisible by $p$ from some point on''.  We first tabulate ten well-known sequences, then prove four conjectures recorded in the OEIS for which we found no proof in the entries (\OEIS{A053871}, \OEIS{A214724}, \OEIS{A000459}, \OEIS{A227937}), and finally list other statements settled earlier in this paper.  The four proofs need different ingredients.  Richman's needs the closed formulas of Section~\ref{sec:odd}, Hanna's needs only the value $A_w(p)\equiv w_1-w_p$ and the shift congruence modulo $p$, van Hoeij's needs an identification of the e.g.f., after which it lies in Bala's 2017 class, and Bala's conjecture on \OEIS{A227937} uses the shift congruence with weights that are not integers but are integral at every prime $p\ge5$.

A sequence is covered by our results for every modulus exactly when its logarithmic weights $w_m=m[x^m]\log A(x)$ are all integers, and if they are rational, the results apply to the moduli coprime to the denominators (the remark after Theorem~\ref{thm:universal}).  Checking this from finitely many terms is only evidence, since a proof needs the structure of the sequence, as in Theorem~\ref{thm:lagrange}.  The theorems of this paper apply to 101 sequences from the OEIS, and the explicit congruences that follow for each of them are collected in a supplementary catalogue, provided as an ancillary file with this paper.  A screen of the OEIS for sequences with $a(0)=1$ and at least $11$ terms consistent with integral logarithmic weights finds about $4{,}000$ candidates beyond these, and the theorems apply to each of them whose logarithmic weights are integral for all $n$.

\subsection{Ten well-known sequences}\label{sec:showcase}

To apply the results to a sequence we only need its weights $w_m$.  With $\gamma_p=w_1+(p-1)w_p$, Proposition~\ref{prop:prime} says $A_w(p)\equiv\gamma_p\pmod p$, so the shift congruence reads $A_w(n+p)\equiv\gamma_p\,A_w(n)\pmod p$ for all $n\ge0$ (Corollary~\ref{cor:iterated} with $k=p$, $q=1$).  Corollary~\ref{cor:odd-prime-power} says $A_w(p^r)\equiv\gamma_p^{\,p^{r-1}}\pmod{p^r}$.  The right side is usually one of three simple numbers.

\begin{lemma}\label{lem:residue-types}
Let $p$ be an odd prime, $\gamma\in\Z$ and $r\ge1$.  If $\gamma\equiv0$, $1$ or $-1\pmod p$, then $\gamma^{p^{r-1}}\equiv0$, $1$ or $-1\pmod{p^r}$ respectively.
\end{lemma}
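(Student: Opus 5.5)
The plan is to handle the three residues separately, using only Lemma~\ref{lem:pth-power} and an induction on $r$, exactly as in the proof of Corollary~\ref{cor:power-of-two-01}.

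\emph{The case $\gamma\equiv0\pmod p$.}  Here no induction is needed.  Write $\gamma=p\gamma'$.  Then $\gamma^{p^{r-1}}$ is divisible by $p^{p^{r-1}}$.  It suffices to check $p^{r-1}\ge r$ for $r\ge1$, which holds since $p\ge3$: equality at $r=1$, and $p^{r-1}$ at least doubles while $r$ increases by one.

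\emph{The case $\gamma\equiv\pm1\pmod p$.}  Put $\varepsilon=\pm1$ with $\gamma\equiv\varepsilon\pmod p$.  I will prove by induction on $r\ge1$ that $\gamma^{p^{r-1}}\equiv\varepsilon\pmod{p^r}$.
\begin{itemize}
\item For $r=1$ this is the hypothesis.
\item For the step, assume $\gamma^{p^{r-1}}\equiv\varepsilon\pmod{p^r}$.  Lemma~\ref{lem:pth-power} with $x=\gamma^{p^{r-1}}$ and $y=\varepsilon$ gives $\gamma^{p^r}\equiv\varepsilon^p\pmod{p^{r+1}}$.
\item Since $p$ is odd, $\varepsilon^p=\varepsilon$, which closes the induction.
\end{itemize}

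The only point needing care is the oddness of $p$.  It is used in $(-1)^p=-1$ and in the inequality $p^{r-1}\ge r$; the latter also holds for $p=2$, but the statement restricts to odd $p$ anyway.  I expect no real obstacle beyond these two points.
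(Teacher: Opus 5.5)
Your proof is correct and follows the paper's argument. The case $p\mid\gamma$ uses $p^{p^{r-1}}\mid\gamma^{p^{r-1}}$ with $p^{r-1}\ge r$, and the case $\gamma\equiv\pm1$ repeats Lemma~\ref{lem:pth-power}; the only cosmetic difference is that you reduce $\varepsilon^p=\varepsilon$ at each inductive step, whereas the paper iterates the lemma to reach $\varepsilon^{p^{r-1}}$ and simplifies once at the end.
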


\begin{proof}
If $p\mid\gamma$ then $p^{p^{r-1}}\mid\gamma^{p^{r-1}}$, and $p^{r-1}\ge2^{r-1}\ge r$.  If $\gamma\equiv\varepsilon\pmod p$ with $\varepsilon=\pm1$, apply Lemma~\ref{lem:pth-power} $r-1$ times to get $\gamma^{p^{r-1}}\equiv\varepsilon^{p^{r-1}}\pmod{p^r}$.  Since $p$ is odd, $\varepsilon^{p^{r-1}}=\varepsilon$.
\end{proof}

We read the table as follows.  For $p=2$ we use Corollary~\ref{cor:power-of-two-01}, which says that for $r\ge4$ the residue $A_w(2^r)\bmod2^r$ is $1$ if $w_1+w_2$ is odd and $0$ if it is even.  Here $\sigma(m)$ is the sum and $d(m)$ the number of divisors of $m$.  The columns $a(n{+}3)$ and $a(n{+}5)$ give the multiplier $\gamma_p\bmod p$ in $a(n+p)\equiv\gamma_pa(n)\pmod p$, and the value $0$ means that the sequence is eventually divisible by the relevant prime powers (Corollary~\ref{cor:eventual-divisibility}).  Take derangements, where $\gamma_p=0+(p-1)\cdot1\equiv-1\pmod p$, so $D(n+3)\equiv2D(n)\pmod3$ and $D(n+5)\equiv4D(n)\pmod5$.  For instance $D(5)=44\equiv4=4D(0)$ and $D(7)=1854\equiv4=4D(2)\pmod5$.  Lemma~\ref{lem:residue-types} gives $D(p^r)\equiv-1\pmod{p^r}$, as in $D(9)=133496=9\cdot14833-1$.  At the prime $2$, $w_1+w_2=1$ is odd, so $D(2^r)\equiv1\pmod{2^r}$ for $r\ge4$.  For composite moduli the shift theorem still gives $a(n+k)\equiv a(n)a(k)\pmod k$, and for instance $D(6)=265\equiv1\pmod 6$, so derangements repeat with period $6$ modulo $6$.  For the odd-cycle numbers, $A_w(n)$ is divisible by $3^r$ and by $5^r$ for every $n\ge3^r$, respectively $n\ge5^r$.

\begin{center}
\small
\renewcommand{\arraystretch}{1.2}
\setlength{\tabcolsep}{2.5pt}
\newcolumntype{R}[1]{>{\raggedright\arraybackslash}p{#1}}
\begin{tabular}{R{0.21\textwidth}R{0.25\textwidth}R{0.09\textwidth}R{0.09\textwidth}R{0.17\textwidth}R{0.10\textwidth}}
\toprule
Sequence & Weights $w_m$ & $a(n{+}3)\equiv{?}\cdot a(n)$ $(\mathrm{mod}\ 3)$ & $a(n{+}5)\equiv{?}\cdot a(n)$ $(\mathrm{mod}\ 5)$ & $a(p^r)\bmod p^r$, $p$ odd & $a(2^r)\bmod 2^r$, $r\ge4$\\
\midrule
\OEIS{A000085} involutions & $w_1=w_2=1$, others $0$ & $1$ & $1$ & $1$ & $0$\\
\OEIS{A000166} derangements & $w_1=0$, $w_m=1$ ($m\ge2$) & $2$ & $4$ & $-1$ & $1$\\
\OEIS{A000246} odd cycle lengths & $w_m=1$ ($m$ odd), $0$ ($m$ even) & $0$ & $0$ & $0$ & $1$\\
\OEIS{A057693} cycle lengths $\le3$ & $w_m=1$ ($m\le3$), else $0$ & $0$ & $1$ & $0$ ($p=3$), $1$ ($p\ge5$) & $0$\\
\OEIS{A000090} $e^{-x^3/3}/(1-x)$ & $w_3=0$, other $w_m=1$ & $1$ & $0$ & $1$ ($p=3$), $0$ ($p\ge5$) & $0$\\
\OEIS{A000262}$^\dagger$ sets of lists & $w_m=m$ & $1$ & $1$ & $1$ & $1$\\
\OEIS{A082579}$^\dagger$ $e^{x/(1-x)^2}$ & $w_m=m^2$ & $1$ & $1$ & $1$ & $1$\\
\OEIS{A052143}$^\dagger$ $e^{x}/\sqrt{1-4x}$ & $w_1=3$, $w_m=2^{2m-1}$ ($m\ge2$) & $1$ & $1$ & $1$ & $1$\\
\OEIS{A294361}$^\dagger$ $e^{\sum\sigma(m)x^m}$ & $w_m=m\,\sigma(m)$ & $1$ & $1$ & $1$ & $1$\\
\OEIS{A028342} $\prod_i(1-x^i)^{-1/i}$ & $w_m=d(m)$ & $2$ & $4$ & $-1$ & $1$\\
\bottomrule
\end{tabular}
\end{center}
\noindent$^\dagger$ These four sequences have e.g.f.\ $F(x)\exp(xG(x))$ with $F(0)=G(0)=1$, so they lie in Bala's 2017 class, so for them the entries $a(p^r)\equiv1$ and $a(2^r)\equiv1$ follow at once from Lemma~\ref{lem:a-k-mod-k}, and the closed formulas are not needed.

Every entry was rechecked with exact integer arithmetic, and for each sequence we recomputed $a(0),\dots,a(700)$ from the weights, compared the first 18 to 28 terms with the OEIS data, and tested the shift congruence modulo $\operatorname{lcm}(n,k)$ for $n+k\le300$, the two mod-$p$ columns, and the prime-power residues for every prime power $p^r\le700$ (excluding $2^2$ and $2^3$), with no exceptions.

\subsection{Conjectures from the OEIS}\label{sec:settled}

Many conjectures in the OEIS have the shape ``$a(n+k)\equiv\pm a(n)\pmod k$'' or ``$p\mid a(n)$ for all large $n$''.  By the shift theorem such a statement follows once we know the single number $a(k)\bmod k$, and for $k=p^r$ that number is computed by the closed formula of Section~\ref{sec:odd}.  So a proof has three steps, which are to find the weights and check that they are integers, to compute $\gamma_p\bmod p$, and to put the primes together by the Chinese remainder theorem and Corollary~\ref{cor:iterated}.

\subsubsection*{\OEIS{A053871} (Richman)}

The sequence $1,0,2,8,60,544,\dots$ satisfies $a(n)=2(n-1)\bigl(a(n-1)+a(n-2)\bigr)$ with $a(0)=1$, $a(1)=0$, and it counts the ways to re-pair $2n$ people so that nobody gets their old partner.  Harry Richman conjectured (29 August 2023) that
\[
 m\equiv n\pmod q,\ q\text{ odd}\quad\Longrightarrow\quad(-1)^ma(m)\equiv(-1)^na(n)\pmod q .
\]
For example, with $q=3$ we have $a(0)=1$, $-a(3)=-8\equiv1$, $a(6)=6040\equiv1\pmod3$.  The sign is the whole point, because unlike the sequences above this one is not periodic modulo $q$ but anti-periodic, $a(n+q)\equiv-a(n)$.  Informally, this is exactly the statement $a(q)\equiv-1\pmod q$ for odd $q$ (together with the shift congruence), and $a(p^r)\bmod p^r$ is what Corollary~\ref{cor:odd-prime-power} computes.  Thus Richman's is the one conjecture of the three that needs the closed formulas.

\begin{lemma}\label{lem:A053871-egf}
The e.g.f.\ of \OEIS{A053871} is
\[
 e^{-x}(1-2x)^{-1/2}=\exp\Bigl(\sum_{m\ge1}w_m\frac{x^m}{m}\Bigr),\qquad w_1=0,\ \ w_m=2^{m-1}\ (m\ge2).
\]
In particular the weights are integers.  (The e.g.f.\ itself is recorded in the entry, but we include the derivation from the recurrence.)
\end{lemma}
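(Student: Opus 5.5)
The plan is to convert the recurrence into a differential equation for the e.g.f.\ and then read off the logarithmic weights. Put $E(x)=\sum_{n\ge0}a(n)x^n/n!$. Write the recurrence as $a(n+1)=2n\,a(n)+2n\,a(n-1)$ for $n\ge1$; it also holds at $n=0$, since $a(1)=0$ and both right-hand terms vanish there. Multiply by $x^n/n!$ and sum over $n\ge0$. The left side gives $E'(x)$. The term $2n\,a(n)x^n/n!$ sums to $2xE'(x)$, and $2n\,a(n-1)x^n/n!=2x\,a(n-1)x^{n-1}/(n-1)!$ sums to $2xE(x)$. Hence
\[
 (1-2x)E'(x)=2xE(x),\qquad E(0)=1 .
\]

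Next I solve this first-order equation formally in $1+x\Q[[x]]$. Since $E$ is invertible there, it is equivalent to
\[
 \frac{E'}{E}=\frac{2x}{1-2x}=-1+\frac{1}{1-2x}.
\]
The series $\log\bigl(e^{-x}(1-2x)^{-1/2}\bigr)=-x-\tfrac12\log(1-2x)$ has the same derivative and the same constant term $0$ as $\log E$. Since a formal power series is determined by its derivative and constant term, the e.g.f.\ is $e^{-x}(1-2x)^{-1/2}$.

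To get the weights, apply Definition~\ref{def:canonical-coordinates} via $x\,E'/E=\sum_m u_m x^m$. We have
\[
 x\Bigl(-1+\sum_{j\ge0}2^jx^j\Bigr)=-x+\sum_{m\ge1}2^{m-1}x^m .
\]
This gives $u_1=-1+1=0$ and $u_m=2^{m-1}$ for $m\ge2$, all integers. By Proposition~\ref{thm:canonical-classification}, $a(n)=A_w(n)$ for this integral weight sequence.

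There is no real obstacle in this lemma. The only point needing care is checking that the recurrence, extended to $n=0$, is consistent with the initial values, so that no boundary correction term appears in the differential equation. I would also confirm against the data: for $m=2$, $C_2(0,2)=2=a(2)$, and $C_3(0,2,4)=3\cdot0\cdot2+2\cdot4=8=a(3)$.
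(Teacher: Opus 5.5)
Your proof is correct and follows essentially the paper's route: both rest on the differential equation $(1-2x)E'=2xE$ and read the integer weights off $\log E=-x-\tfrac12\log(1-2x)$. The only difference is direction. The paper starts from the closed form and checks that its coefficients satisfy the recurrence with the right initial values, while you derive the equation from the recurrence and integrate it, which is equally valid.
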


\begin{proof}
Let $E(x)=e^{-x}(1-2x)^{-1/2}$.  Then $E'/E=-1+(1-2x)^{-1}=2x/(1-2x)$, that is, $(1-2x)E'=2xE$.  Write $E=\sum e_nx^n/n!$.  Comparing coefficients of $x^n/n!$ gives $e_{n+1}-2ne_n=2ne_{n-1}$, with $e_0=1$ and $e_1=E'(0)=0$.  This is the recurrence defining $a(n)$, so $e_n=a(n)$.  Finally $\log E=-x+\tfrac12\sum_{m\ge1}(2x)^m/m$, so $w_1=-1+1=0$ and $w_m=2^m/2=2^{m-1}$ for $m\ge2$.
\end{proof}

Note that $E=F\exp(xG)$ with $G=-1$ and $F=(1-2x)^{-1/2}$, which has non-integral coefficients ($\tfrac32$ at $x^2$), so neither Bala's theorem (which needs $G(0)=1$) nor Lemma~\ref{lem:a-k-mod-k} applies.  It is the integrality of the \emph{weights} that does.

\begin{theorem}[Richman's conjecture]\label{thm:richman}
For every odd $q\ge1$ we have $a(q)\equiv-1\pmod q$, and $a(n+q)\equiv-a(n)\pmod q$ for all $n\ge0$.  Consequently, if $m\equiv n\pmod q$ with $q$ odd, then $(-1)^ma(m)\equiv(-1)^na(n)\pmod q$.
\end{theorem}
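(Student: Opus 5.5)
The plan is to reduce everything to the single residue $a(q)\bmod q$ and then invoke the shift congruence.  By Lemma~\ref{lem:A053871-egf} the sequence is $a(n)=A_w(n)$ for the integral weights $w_1=0$, $w_m=2^{m-1}$ ($m\ge2$).  So Theorem~\ref{thm:universal} gives $a(n+q)\equiv a(n)a(q)\pmod q$ for all $n\ge0$.  Once $a(q)\equiv-1\pmod q$ is known, the second assertion follows at once.  Iterating it (Corollary~\ref{cor:iterated}) gives $a(n+tq)\equiv(-1)^ta(n)$.  Since $q$ is odd, $(-1)^{n+tq}=(-1)^n(-1)^t$, so $(-1)^ma(m)\equiv(-1)^na(n)$ whenever $m=n+tq$ with $t\ge0$.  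The case $m<n$ follows by symmetry.

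The main step is $a(q)\equiv-1\pmod q$ for odd $q$, which I will prove prime power by prime power and then assemble by the Chinese remainder theorem.

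\emph{Prime powers.}  Fix an odd prime $p$.  Then $\gamma_p=w_1+(p-1)w_p=(p-1)2^{p-1}$.  By Fermat, $2^{p-1}\equiv1\pmod p$, so $\gamma_p\equiv-1\pmod p$.  Corollary~\ref{cor:odd-prime-power} then gives $a(p^r)\equiv\gamma_p^{p^{r-1}}\pmod{p^r}$, and Lemma~\ref{lem:residue-types} upgrades this to $a(p^r)\equiv-1\pmod{p^r}$.

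\emph{Assembling.}  Write $q=p^r q'$ with $p\nmid q'$.  Iterating the shift congruence modulo $p^r$ with step $k=p^r$ gives
\[
 a(q)=a(q'p^r)\equiv a(p^r)^{q'}\equiv(-1)^{q'}=-1\pmod{p^r},
\]
since $q'$ is odd.  Doing this for every prime power exactly dividing $q$ and combining by the Chinese remainder theorem yields $a(q)\equiv-1\pmod q$.  The case $q=1$ is trivial.

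I expect no real obstacle beyond these steps.  The one point needing care is that the closed formula is applied only at prime powers, with composite $q$ handled through the shift congruence.  A direct computation of $a(q)\bmod q$ for composite $q$ is not available.
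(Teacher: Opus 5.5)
Your proposal is correct and follows the paper's proof essentially step for step. You obtain $\gamma_p\equiv-1\pmod p$ from Fermat, then $a(p^r)\equiv-1\pmod{p^r}$ from Corollary~\ref{cor:odd-prime-power} and Lemma~\ref{lem:residue-types}, then handle composite $q$ by Corollary~\ref{cor:iterated} with $n=0$, $k=p^r$ and an odd number of shifts together with the Chinese remainder theorem, and finally use the shift theorem and a parity count for the sign statement; the only difference is that you state the consequences of $a(q)\equiv-1$ before proving it.
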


\begin{proof}
Let $p$ be an odd prime.  Then $\gamma_p=w_1+(p-1)w_p=(p-1)2^{p-1}\equiv-1\pmod p$ by Fermat's theorem.  By Corollary~\ref{cor:odd-prime-power} and Lemma~\ref{lem:residue-types}, $a(p^r)\equiv-1\pmod{p^r}$ for all $r\ge1$.

Now let $q>1$ be odd and write $q=p^rb$ with $p$ prime, $p\nmid b$, so that $b$ is odd.  Corollary~\ref{cor:iterated} (with $n=0$, $k=p^r$, and $b$ in the role of the number of shifts) gives
$a(q)\equiv a(0)\,a(p^r)^b\equiv(-1)^b=-1\pmod{p^r}$.  This holds for every prime power exactly dividing $q$, so $a(q)\equiv-1\pmod q$ by the Chinese remainder theorem (for $q=1$ there is nothing to prove).  The shift theorem with $k=q$ gives $a(n+q)\equiv a(n)a(q)\equiv-a(n)\pmod q$, and by induction $a(n+jq)\equiv(-1)^ja(n)\pmod q$ for $j\ge0$.

Finally let $m\equiv n\pmod q$, say $m=n+jq$ with $j\ge0$ (the case $m<n$ is symmetric).  Since $q$ is odd, $(-1)^m=(-1)^{n+j}$, hence $(-1)^ma(m)\equiv(-1)^{n+j}(-1)^ja(n)=(-1)^na(n)\pmod q$.
\end{proof}

\subsubsection*{\OEIS{A214724} (Hanna)}

The sequence is defined by the e.g.f.\ 
\[
 \exp\Bigl(\sum_{n\ge0}\frac{x^{n^2+1}}{n^2+1}\Bigr)
\]
and begins $1,1,2,4,10,50,220,1240,\dots$.  The entry, authored by Paul D.~Hanna (2012), states the conjecture that $p\mid a(n)$ for all $n\ge p$ whenever $p$ is a prime of the form $m^2+1$ (the primes $2,5,17,37,101,\dots$ of \OEIS{A002496}).  Here the weights are as simple as possible, namely $w_j=1$ if $j=n^2+1$ for some $n\ge0$, and $w_j=0$ otherwise.  For such a prime $w_1=w_p=1$ and $A_w(p)\equiv w_1-w_p=0\pmod p$, and then the shift congruence propagates the zero.  This is a three-line corollary of the product congruence (Theorem~\ref{thm:universal} with $k=p$) and Proposition~\ref{prop:prime}, and the prime-power residues of Section~\ref{sec:odd} are not needed.

\begin{theorem}[Hanna's conjecture]\label{thm:hanna}
Let $a=A_w$ with $w_j=1$ for $j\in\{n^2+1:n\ge0\}$ and $w_j=0$ otherwise.  If $p=m^2+1$ is prime, then $p\mid a(n)$ for every $n\ge p$.  Conversely, if the prime $p$ is not of this form, then $a(n+p)\equiv a(n)\pmod p$ for all $n$, so $a$ is purely periodic modulo $p$ and never eventually divisible by $p$.
\end{theorem}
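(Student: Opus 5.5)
The plan is to compute the multiplier $\gamma_p$ and then use the shift congruence modulo $p$. By Proposition~\ref{prop:prime}, $a(p)\equiv w_1-w_p\pmod p$. Since $1=0^2+1$, we always have $w_1=1$, so everything turns on the value of $w_p\in\{0,1\}$.

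\emph{First case: $p=m^2+1$ is prime.} Then $w_p=1$, so $a(p)\equiv 0\pmod p$. This also covers $p=2=1^2+1$, since Proposition~\ref{prop:prime} holds for every prime. Theorem~\ref{thm:universal} with $k=p$ then gives
\[
 a(n+p)\equiv a(n)\,a(p)\equiv 0\pmod p
\]
for every $n\ge0$. Writing any $N\ge p$ as $N=n+p$ with $n\ge0$ shows $p\mid a(N)$.

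\emph{Second case: $p$ is not of the form $m^2+1$.} Then $w_p=0$, so $a(p)\equiv w_1=1\pmod p$. The shift congruence gives $a(n+p)\equiv a(n)\pmod p$, that is, pure periodicity with period $p$ (Corollary~\ref{cor:dynamics}(iii)). Because $a(0)=1$, every term $a(jp)$ is congruent to $1$ modulo $p$. These terms are nonzero modulo $p$ for arbitrarily large indices, so the sequence is never eventually divisible by $p$.

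There is no real obstacle here. The only points that need care are the following.
\begin{itemize}
\item Check that $w_1=1$ (from $n=0$).
\item Check that $p=2$ falls under the first case.
\item Note that the weights are integers, so Theorem~\ref{thm:universal} applies directly.
\end{itemize}
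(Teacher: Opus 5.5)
Your proposal is correct and follows essentially the same route as the paper's proof. You use Proposition~\ref{prop:prime} to get $a(p)\equiv w_1-w_p$, note that $w_1=1$ always, split on whether $w_p$ is $1$ or $0$, and then propagate $a(p)\equiv 0$ or $a(p)\equiv 1$ with the shift congruence at $k=p$, with the same care about $p=2$ and $a(0)=1$.
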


\begin{proof}
Since $p$ is prime, $m\ge1$ (the case $m=0$ would give $p=1$), so $p\ne1$ is an index with $w_p=1$, and also $w_1=1$ because $1=0^2+1$.  Proposition~\ref{prop:prime} gives $a(p)\equiv w_1-w_p=0\pmod p$ (this holds also for $p=2$, where $a(2)=2$).  The shift congruence with $k=p$ gives $a(n+p)\equiv a(n)a(p)\equiv0\pmod p$ for all $n\ge0$, which is the claim.  Conversely, if $p$ is not of the form $m^2+1$ then $p\ne2$ and $w_p=0$, so $a(p)\equiv w_1=1\pmod p$ and $a(n+p)\equiv a(n)\pmod p$.  Since $a(0)=1$, the residue $1$ occurs infinitely often.
\end{proof}

\subsubsection*{\OEIS{A000459} (van Hoeij)}

The sequence \OEIS{A000459} counts the multiset permutations of $1,1,2,2,\dots,n,n$ with no fixed points, and it begins $1,0,1,10,297,13756,\dots$.  Mark van Hoeij conjectured (15 November 2023) that $a(n+k)-a(n)$ is divisible by $k$ for all $n\ge0$ and $k\ge1$.  What was missing was a reason why a sequence of this kind should have integer logarithmic weights.  The key point is to write the e.g.f.\ explicitly as $e^x$ times an integer series, which exhibits the weights and shows that the sequence lies in the class of Lemma~\ref{lem:a-k-mod-k}.

We use the integral formula in the entry (recorded there by Peter Bala in 2014),
\[
 a(n)=\int_0^\infty e^{-t}\Bigl(1-2t+\frac{t^2}2\Bigr)^n\,dt ,
\]
which we also checked against the recurrence recorded in the entry for all $n\le120$.  Write $(2m-1)!!=1\cdot3\cdots(2m-1)$, with $(-1)!!=1$.

\begin{lemma}\label{lem:A000459-egf}
$\displaystyle\sum_{n\ge0}a(n)\frac{x^n}{n!}=e^xF(x)$ with
\[
 F(x)=\sum_{m\ge0}(2m-1)!!\,\frac{x^m}{(1+2x)^{2m+1}}
 =\frac1{1+2x}\,\Phi\!\Bigl(\frac{x}{(1+2x)^2}\Bigr),\qquad \Phi(y)=\sum_{m\ge0}(2m-1)!!\,y^m .
\]
Moreover $F\in1+x\Z[[x]]$, and indeed $F=1-x+x^2+x^3+11x^4+103x^5+\cdots$.
\end{lemma}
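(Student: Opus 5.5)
The plan is to compute the e.g.f.\ straight from the integral formula by summing under the integral sign, and then to read off integrality from the closed form.

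First, for formal $x$,
\[
 \sum_{n\ge0}a(n)\frac{x^n}{n!}=\int_0^\infty e^{-t}\exp\Bigl(x\bigl(1-2t+\tfrac{t^2}{2}\bigr)\Bigr)\,dt
 =e^x\int_0^\infty e^{-t(1+2x)}e^{xt^2/2}\,dt .
\]
This is understood coefficientwise in $x$. Each coefficient of $x^n$ is a finite combination of moments $\int_0^\infty e^{-t}t^j\,dt=j!$, so the interchange is legitimate as an identity of formal power series. Next, expand $e^{xt^2/2}=\sum_m x^mt^{2m}/(2^mm!)$ and use $\int_0^\infty e^{-ct}t^{2m}\,dt=(2m)!/c^{2m+1}$ with $c=1+2x$, again formally: expand $e^{-2xt}$ and resum, which is the binomial series for $(1+2x)^{-2m-1}$. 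Since $(2m)!/(2^mm!)=(2m-1)!!$, this gives
\[
 F(x)=\sum_{m\ge0}(2m-1)!!\,\frac{x^m}{(1+2x)^{2m+1}}=\frac1{1+2x}\,\Phi\Bigl(\frac{x}{(1+2x)^2}\Bigr).
\]
The composition is well defined because $x/(1+2x)^2\in x\Z[[x]]$, so only finitely many $m$ contribute to each coefficient.

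Integrality is now immediate. The factors $(1+2x)^{-1}$ and $(1+2x)^{-2m-1}$ lie in $1+x\Z[[x]]$, $(2m-1)!!\in\Z$, and the $m=0$ term is $1/(1+2x)=1-2x+\cdots$. Hence $F\in\Z[[x]]$ with $F(0)=1$. The first few coefficients follow by expanding the terms $m\le5$: for example, the coefficient of $x$ is $-2+1=-1$, and the coefficient of $x^2$ is $4-4+3=3$.

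A check against the data gives $e^xF$ with $a(2)/2=1/2+(-1)+3=5/2$, which is not $a(2)=1$ divided by $2!$. So I expect the only real obstacle to be this bookkeeping, together with the normalization of the integrand. If the stated form of the integral (with $1-2t+t^2/2$) does not reproduce the data, I would first recheck the e.g.f.\ against the recorded recurrence for small $n$ and adjust the substitution. Possibly the correct prefactor is $e^{-x}$, or the argument of $\Phi$ changes accordingly. The structural conclusion, that $F$ is $1/(1+2x)$ times an integer series $\Phi$ evaluated at an element of $x\Z[[x]]$, is unaffected by such a correction.
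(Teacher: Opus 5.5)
Your derivation is the paper's own argument, and it is correct up to the closed form and the integrality claim. The paper also reads the integral as the linear functional $t^j\mapsto j!$ applied to each coefficient in $x$. It factors $e^{xP}=e^x\,e^{-2xt}\,e^{xt^2/2}$ and evaluates $\sum_j\frac{(-2x)^j}{j!}(2m+j)!=(2m)!/(1+2x)^{2m+1}$ by the binomial series. It then reads off $F\in1+x\Z[[x]]$ from $x^m(1+2x)^{-(2m+1)}\in x^m\Z[[x]]$, exactly as you do.

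\textbf{The error is in your numerical check.} The coefficient of $x^2$ in $F$ is not $3$. The $m=1$ term is $x(1+2x)^{-3}=x(1-6x+24x^2-\cdots)$, so it contributes $-6$ at $x^2$, not $-4$. Hence $[x^2]F=4-6+3=1$, as the statement says. Then $[x^2]\,e^xF=\tfrac12-1+1=\tfrac12$, so $a(2)=1$, which agrees with the data. Likewise $[x^3]F=-8+24-30+15=1$ gives $[x^3]\,e^xF=\tfrac16-\tfrac12+1+1=\tfrac{10}{6}$, so $a(3)=10$.

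\textbf{The proposed corrections are wrong.} There is no discrepancy, so the changes you contemplate are unnecessary. They would also break the identity: with the prefactor $e^{-x}$ you would get $a(1)=-1-1=-2$ instead of $0$.

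\textbf{What the proposal fails to establish.} As written, your proposal contradicts the explicit expansion $F=1-x+x^2+x^3+11x^4+103x^5+\cdots$, which is part of the statement. It also leaves the identity itself in doubt. You need to redo the bookkeeping rather than adjust the formula; it confirms the stated values:
\[
[x^4]F=16-80+180-210+105=11,\qquad [x^5]F=-32+240-840+1680-1890+945=103.
\]
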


\begin{proof}
Let $L$ be the linear map on polynomials in $t$ with $L(t^j)=j!$, so that $\int_0^\infty e^{-t}g(t)\,dt=L(g)$.  Put $P=1-2t+t^2/2$.  Then $\sum_na(n)x^n/n!=L(e^{xP})$, where $L$ acts on each coefficient of the power series in $x$ (the coefficient of $x^N$ is a polynomial in $t$).  Write $e^{xP}=e^x\,e^{-2xt}\,e^{xt^2/2}$ and expand the last factor, $e^{xt^2/2}=\sum_m\frac{x^m}{2^mm!}t^{2m}$.  This gives
\[
 L(e^{xP})=e^x\sum_{m\ge0}\frac{x^m}{2^mm!}\,L\bigl(t^{2m}e^{-2xt}\bigr),\qquad
 L\bigl(t^{2m}e^{-2xt}\bigr)=\sum_{j\ge0}\frac{(-2x)^j}{j!}(2m+j)!=\frac{(2m)!}{(1+2x)^{2m+1}},
\]
by the binomial series.  Since $(2m)!/(2^mm!)=(2m-1)!!$, we get the displayed formula.  Each term $x^m(1+2x)^{-(2m+1)}$ lies in $x^m\Z[[x]]$, so $F\in\Z[[x]]$, and only $m=0$ contributes to the constant term, which is $1$.
\end{proof}

\begin{theorem}[van Hoeij's conjecture]\label{thm:vanhoeij}
For all $n\ge0$ and $k\ge1$, $a(n+k)\equiv a(n)\pmod k$, and $a(k)\equiv1\pmod k$.
\end{theorem}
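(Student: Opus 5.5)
The plan is to place the sequence in Bala's class and then invoke the earlier results. By Lemma~\ref{lem:A000459-egf}, the e.g.f.\ is $e^xF(x)$ with $F\in1+x\Z[[x]]$. This is exactly the shape $F(x)\exp(xG(x))$ of Lemma~\ref{lem:a-k-mod-k} with $G=1$, so $F(0)=G(0)=1$.

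The first step is to apply Lemma~\ref{lem:a-k-mod-k} directly. It gives $a(k)\equiv F(0)G(0)^k=1\pmod k$ for every $k\ge1$, which is the second assertion of the theorem.

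The second step is to check that the logarithmic weights are integers, so that $a=A_w$ for an integer weight sequence $w$. By the remark on Bala's class, $m[x^m]\log F=[x^m]\,xF'/F$, and this lies in $\Z$ because $F$ is invertible in $\Z[[x]]$. The factor $e^x$ only adds $1$ to $w_1$. Hence $w_1=1+[x]F=0$ and every $w_m\in\Z$. Proposition~\ref{thm:canonical-classification} then identifies $a(n)=C_n(w_1,\dots,w_n)$.

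The third step is to combine the two. Theorem~\ref{thm:universal} with modulus $k$ gives $a(n+k)\equiv a(n)a(k)\equiv a(n)\pmod k$ for all $n\ge0$. Equivalently, one can cite Bala's 2017 theorem, since the sequence lies in his class.

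The only nontrivial ingredient is the integrality $F\in\Z[[x]]$, and Lemma~\ref{lem:A000459-egf} already supplies it through the identity $L(t^{2m}e^{-2xt})=(2m)!/(1+2x)^{2m+1}$. So I expect no real obstacle beyond that lemma, except to confirm that the e.g.f.\ derived from the integral formula matches the combinatorial definition in the entry. That match was already checked against the recurrence.
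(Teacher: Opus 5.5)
Your proposal is correct and follows the paper's proof step for step. Lemma~\ref{lem:A000459-egf} puts the e.g.f.\ in the form $F(x)e^{x}$ with $F\in 1+x\Z[[x]]$, Lemma~\ref{lem:a-k-mod-k} gives $a(k)\equiv1\pmod k$, invertibility of $F$ in $\Z[[x]]$ makes the weights integral, and Theorem~\ref{thm:universal} finishes (your side check $w_1=1+[x]F=0$, matching $a(1)=0$, is also right).
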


\begin{proof}
By Lemma~\ref{lem:A000459-egf}, $\sum a(n)x^n/n!=F(x)\exp(xG(x))$ with $G=1$ and $F\in1+x\Z[[x]]$.  So $F(0)=G(0)=1$, and Lemma~\ref{lem:a-k-mod-k} gives $a(k)\equiv1\pmod k$.  The logarithmic weights are integers, because $w_m=[m=1]+m[x^m]\log F=[m=1]+[x^m]\,xF'/F$ and $xF'/F\in\Z[[x]]$, as $F\in1+x\Z[[x]]$ is invertible in $\Z[[x]]$.  So Theorem~\ref{thm:universal} gives $a(n+k)\equiv a(n)a(k)\equiv a(n)\pmod k$.
\end{proof}

The same computation works for the whole family $a(n)=\int_0^\infty e^{-t}(1+\alpha t+\beta t^2/2)^n\,dt$, and it gives $e^x(1-\alpha x)^{-1}\Phi\bigl(\beta x/(1-\alpha x)^2\bigr)$, which is integral for integers $\alpha,\beta$, so the conclusion holds for the whole family.

For \OEIS{A000459}, once the e.g.f.\ has the form $F(x)\exp(xG(x))$ with $F,G\in\Z[[x]]$ and $G(0)=1$, Bala's 2017 theorem~\cite{Bala2017} also gives van Hoeij's congruence.  The new input here is the identification of the e.g.f.\ in Lemma~\ref{lem:A000459-egf}, and the proof above then goes through Lemma~\ref{lem:a-k-mod-k} and Theorem~\ref{thm:universal}.

\begin{remark}[Computational evidence]
For Richman's conjecture, the congruence holds for all odd $q\le299$ and all $m,n\le300$ with $m\equiv n\pmod q$ (about $1.8\cdot10^5$ pairs), and $a(q)\equiv-1\pmod q$ for all odd $q\le699$.  For Hanna's conjecture, among the primes $p\le300$ the statement ``$p\mid a(n)$ for all $p\le n\le300$'' holds for exactly the seven primes $2,5,17,37,101,197,257$, the primes of the form $m^2+1$ in that range.  For van Hoeij's conjecture, the congruence and $a(k)\equiv1\pmod k$ hold for $n+k\le400$, the weights are integers up to $m=300$, the series $F$ computed from the recurrence agrees with the formula of Lemma~\ref{lem:A000459-egf} to degree $300$, and three pairs $(\alpha,\beta)$ of the family above were checked to degree $40$.
\end{remark}

\subsubsection*{\OEIS{A227937} (Bala)}

The sequence $1,0,1,1,3,10,25,105,\dots$ counts partitions of $n$ labeled elements into blocks of size $2$ or $3$, and its e.g.f.\ is $\exp(x^2/2+x^3/6)$.  The entry, read on 10 October 2026, notes that the sequence is periodic modulo $2$ and modulo $3$, and records a conjecture of Peter Bala (21 November 2025) that $n$ divides $a(n+k)$ for $n$ coprime to both $2$ and $3$ and all $k\ge0$.  For example, $a(5)=10$, $a(6)=25$ and $a(7)=105$ are all multiples of $5$, and $a(7)=105=7\cdot15$.

The weights are $w_2=1$, $w_3=\tfrac12$ and $w_m=0$ otherwise, because $\exp(x^2/2+x^3/6)=\exp\bigl(\sum_mw_mx^m/m\bigr)$ and logarithmic coordinates are unique.  They are not integers, but their only denominator is $2$, so $w_m\in\Z_{(p)}$ for every prime $p\ge3$.

\begin{theorem}[Bala's conjecture on \OEIS{A227937}]\label{thm:bala-A227937}
Let $a(n)=n![x^n]\exp(x^2/2+x^3/6)$.  If $p\ge5$ is prime and $r\ge1$, then $p^r\mid a(N)$ for every $N\ge p^r$.  Consequently $n\mid a(n+k)$ for all $n$ coprime to $6$ and all $k\ge0$.
\end{theorem}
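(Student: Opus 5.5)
The plan is to work $p$-locally, using the remark after Theorem~\ref{thm:universal}: every result whose proof is a coefficientwise congruence in $\Z[X_1,X_2,\ldots]$ modulo a power of $p$ stays valid for weights in $\Z_{(p)}$.

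Fix a prime $p\ge5$. The logarithmic weights are $w_2=1$, $w_3=\tfrac12$, and $w_m=0$ otherwise, so they all lie in $\Z_{(p)}$. The numbers $a(n)=C_n(w_1,\ldots,w_n)$ are integers, since they count set partitions. Evaluate the coefficientwise congruences of Theorem~\ref{thm:universal} and Theorem~\ref{thm:strong-all-primes} at these weights. This gives
\[
a(n+k)\equiv a(n)a(k)\pmod{k\Z_{(p)}}\qquad\text{and}\qquad a(pk)\equiv a(k)^p\pmod{p^{2s}\Z_{(p)}}.
\]
For integers, membership in $p^e\Z_{(p)}$ is the same as divisibility by $p^e$.

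Next I redo the induction of Corollary~\ref{cor:odd-prime-power} for these weights. The base case uses Proposition~\ref{prop:prime}. Its proof works over $\Z_{(p)}$: the class sizes other than those of $(1^p)$ and $(p)$ are divisible by $p$, and Wilson gives $(p-1)!\equiv-1$. Fermat's $w_1^p\equiv w_1$ holds in $\Z_{(p)}/p\cong\Z/p$. Since $p\ge5$ and $w_1=w_p=0$, this gives $a(p)\equiv w_1-w_p=0\pmod p$, so $\gamma_p\equiv0$. The inductive step is unchanged: Lemma~\ref{lem:pth-power} holds in $\Z_{(p)}$, and the lift gives modulus $p^{2r}\supseteq p^{r+1}$. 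Hence $a(p^r)\equiv0\pmod{p^r}$ for all $r\ge1$. (Alternatively, we only need $a(p^r)\equiv0$, so we can induct directly: if $p^r\mid a(p^r)$ then $p^{rp}\mid a(p^r)^p$, and the lift gives $a(p^{r+1})\equiv a(p^r)^p\pmod{p^{2r}}$, so $p^{r+1}\mid a(p^{r+1})$.)

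Then the shift congruence with $k=p^r$ gives $a(n+p^r)\equiv a(n)a(p^r)\equiv0\pmod{p^r}$ for all $n\ge0$, that is, $p^r\mid a(N)$ for all $N\ge p^r$. For the consequence, let $n$ be coprime to $6$ and $k\ge0$. For each prime power $p^r\parallel n$ we have $p\ge5$ and $n+k\ge n\ge p^r$, so $p^r\mid a(n+k)$. By the Chinese remainder theorem $n\mid a(n+k)$, and $n=1$ is trivial.

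The only real point to check is that the lift theorem and Proposition~\ref{prop:prime} really transfer to weights in $\Z_{(p)}$. The lift is a coefficientwise statement about integer polynomials modulo $p^{2s}$, so substitution into $\Z_{(p)}$ preserves it. The obstacle is that the denominator $2$ of $w_3$ must be invertible, which excludes only $p=2$, and the restriction $p\ge5$ (rather than $p\ge3$) is needed so that $w_p=0$.
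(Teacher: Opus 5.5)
Your proof is correct, but it gets the key divisibility $p^r\mid a(p^r)$ by a different route from the paper.

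\textbf{The paper's route.} It uses a direct count. Writing $a(M)=\sum_{2i+3j=M}\frac{1}{2^i6^j}\cdot\frac{M!}{(i+j)!}\binom{i+j}{i}$ with $M=p^r$, the number of blocks satisfies $i+j\le M/2<M$. So $M$ divides $(i+j+1)\cdots M$, while $2^i6^j$ is a unit at $p$. After that, only the shift congruence over $\Z_{(p)}$ is used, exactly as in your final step.

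\textbf{Your route.} You note that $\gamma_p=w_1+(p-1)w_p=0$ for $p\ge5$. You then run the Frobenius induction through Proposition~\ref{prop:prime} and the lift of Theorem~\ref{thm:strong-all-primes}. In effect you apply Corollary~\ref{cor:eventual-divisibility} to weights in $\Z_{(p)}$. This transfer is legitimate because the lift is a coefficientwise congruence in $\Z[X_1,X_2,\ldots]$.

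\textbf{Comparison.} Your argument is more conceptual. It presents the conjecture as an instance of the general criterion $p\mid\gamma_p$, and it shows at once why $p=3$ is excluded: $\gamma_3=2w_3=1$, matching $a(3)=1$. The cost is that it depends on the lifting theorem, and through it on the exact content formula (Theorem~\ref{thm:optimal-divisor}). The paper's count is elementary and needs only the modulus-$k$ shift congruence. The same count also gives $M\mid a(M)$ verbatim for every $M$ coprime to $6$, not only for prime powers.
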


\begin{proof}
Fix a prime $p\ge5$ and $r\ge1$, and put $M=p^r$.  First we show $M\mid a(M)$.  Choosing $i$ blocks of size $2$ and $j$ blocks of size $3$ with $2i+3j=M$ gives
\[
 a(M)=\sum_{2i+3j=M}\frac{M!}{2^i\,6^j\,i!\,j!}
 =\sum_{2i+3j=M}\frac{1}{2^i6^j}\cdot\frac{M!}{(i+j)!}\binom{i+j}{i}.
\]
Since $2i+3j=M\ge1$ we have $i+j\le M/2<M$, so $M!/(i+j)!=(i+j+1)\cdots M$ is an integer divisible by $M$.  The denominators $2^i6^j$ are units in $\Z_{(p)}$, so every term lies in $M\Z_{(p)}$, hence $a(M)\in M\Z_{(p)}$, and as $a(M)$ is an integer, $M\mid a(M)$.

Now let $k\ge1$.  The weights lie in $\Z_{(p)}$, so Theorem~\ref{thm:universal}, applied in $\Z_{(p)}$ as in the remark following it, gives $a(M+k)\equiv a(M)a(k)\pmod{\operatorname{lcm}(M,k)\Z_{(p)}}$.  The modulus is divisible by $M$, and $a(M)\equiv0\pmod M$, so $M\mid a(M+k)$ in $\Z_{(p)}$, hence in $\Z$.  Together with $M\mid a(M)$ this proves the first claim.

For the second claim let $n$ be coprime to $6$ and $k\ge0$.  Every prime power $p^r$ exactly dividing $n$ has $p\ge5$ and $p^r\le n\le n+k$, so $p^r\mid a(n+k)$.  These prime powers are coprime, so $n\mid a(n+k)$.
\end{proof}

The primes $2$ and $3$ behave differently, since the weight $w_3=\tfrac12$ is not integral at $2$, and the entry notes that the sequence is periodic modulo $2$ and modulo $3$.

\begin{remark}[Computational evidence for \OEIS{A227937}]
We recomputed $a(0),\dots,a(1600)$ from the sum over $(i,j)$ above, from the e.g.f.\ weights and from the recurrence $a(n)=(n-1)a(n-2)+\tfrac12(n-1)(n-2)a(n-3)$ of the entry.  All three agree, and the first $656$ terms agree with the b-file of the entry.  The conjecture holds for all $n\le1200$ coprime to $6$ and all $n+k\le1600$.
\end{remark}

\subsection{Other statements settled earlier in this paper}

\begin{itemize}
\item \OEIS{A251568} and \OEIS{A251569} satisfy that $a(n)-1$ is divisible by $n(n-1)$ for $n\ge2$ (Corollary~\ref{cor:n-n-1}).
\item \OEIS{A278070}, and Bala's conjecture for the whole family $n![x^n]\,e^{xG(x)}F(x)^n$ (Theorem~\ref{thm:lagrange}).
\item Eventual divisibility of the cycle-length-bounded numbers \OEIS{A057693} (cycle lengths at most $3$), \OEIS{A070945} (at most $4$) and \OEIS{A070946} (at most $5$), by Corollary~\ref{cor:eventual-divisibility}, where $w_m=1$ for $m\le r$ and $0$ otherwise, so $\gamma_p=p$ for every prime $p\le r$ and $\gamma_p=1$ for $p>r$.  Hence every number $2^i3^j$ (for $r=3,4$) or $2^i3^j5^k$ (for $r=5$) divides $a(n)$ for all large $n$, and no other prime divides all large terms.  For $r=3$, $A_w(n)\equiv0\pmod{3^e}$ for all $n\ge3^e$ and $A_w(n)\equiv0\pmod{2^e}$ for all $n\ge2^e$ once $e\ge4$ (checked up to $n=700$).
\end{itemize}
\begin{remark}
Exact valuations, for instance the formulas for $v_3(a(n))$ conjectured for \OEIS{A057693} and \OEIS{A070945}, need more than the universal congruences.  Several sequences carry conjectures such as $p^{\lfloor n/p\rfloor}\mid A_w(n)$.  Some can be proved by a separate $p$-adic integrality argument when the generating function has a closed product form, or by a direct analysis of a recurrence, but we do not know a general theory of exact valuations.
\end{remark}

\subsection{Provenance of the conjectures}\label{sec:provenance}

The general account of earlier work is in Section~\ref{sec:relation}, and here are the details for this section.  The four conjectures above stand in their entries as conjectures by Richman (2023), Hanna (2012), van Hoeij (2023) and Bala (2025), and the entry texts we read\footnote{On 10 October 2026~\cite{OEIS}.} contain no proof of any of them.  Bala's 2017 theorem~\cite{Bala2017} makes van Hoeij's conjecture a case of a known result once the e.g.f.\ is identified, Hanna's conjecture is a three-line consequence of the product congruence and the value $A_w(p)\equiv w_1-w_p$, and only Richman's needs the closed formulas of Section~\ref{sec:odd}, whose odd-prime case could also be derived from~\cite{BellaghOulebsir2023} (our proof is independent and covers $p=2$).  Bala's OEIS comments of 30 July 2026 on \OEIS{A057693}, \OEIS{A070945} and \OEIS{A070946} state the product congruence together with the eventual divisibility claims discussed above (stated for \OEIS{A070946}, conjectural for \OEIS{A070945}, conditional on a $3$-adic conjecture for \OEIS{A057693}), and the note~\cite{Bala2026} is discussed in Section~\ref{sec:relation}.  Bala's conjecture on \OEIS{A227937} is stated in the entry as a conjecture (21 November 2025) and follows from the shift congruence with weights in $\Z_{(p)}$ for $p\ge5$ and one divisibility of $A_w(p^r)$.  The identification of the e.g.f.\ of \OEIS{A000459} (Lemma~\ref{lem:A000459-egf}) is new as far as we know.  Each of the four thus reduces to a short computation once the right general statement is available.

A catalogue of 101 OEIS sequences, with weights, prime-power residues and a per-sequence provenance audit, is provided as an ancillary file with this paper, and every statement in it is obtained by applying the theorems proved here.  The stronger valuation conjectures attached to \OEIS{A193437}, \OEIS{A193438}, \OEIS{A244432} and \OEIS{A378495} need further tools.

\section{Computer verification and formalization}\label{sec:verification}

\paragraph{Data and code availability.}
Every theorem is proved in the text, and computer calculations serve as independent checks of the proofs.  The supplementary exact-integer programs (in the package directory \texttt{scripts/}) reproduce the finite coefficient checks, including the checks of Section~\ref{sec:sequences} (\texttt{verify\_oeis\_section.py}), of the content formula for $n,k\le20$ (\texttt{verify\_optimal\_divisor.py}), of the lift witness (\texttt{verify\_lift\_sharpness.py}) and of the first four levels of Theorem~\ref{thm:power-of-two}.

\paragraph{Lean.}
Several of the results have been formalized in Lean~4/Mathlib, with no \texttt{sorry} and using only Lean's standard axioms (\texttt{propext}, \texttt{Classical.choice}, \texttt{Quot.sound}).  The formalization is in two public repositories, namely \url{https://github.com/ahaankallat/congruence-phenomena-lean} (this paper, which contains the shift congruence, the content formula and the power-of-two residue) and \url{https://github.com/ahaankallat/congruence-phenomena-higher-order-lean} (a second repository, which formalizes a classification of higher-order defects together with the lifting theorem and the Lagrange-type theorem).  The formalized results are the shift congruence (Theorem~\ref{thm:universal}), the canonical-coordinate expansion of Section~\ref{sec:cycle}, the content formula (Theorem~\ref{thm:optimal-divisor}), the explicit power-of-two residue (Theorem~\ref{thm:power-of-two} and Corollary~\ref{cor:power-of-two-01}), the lifting theorem (Theorem~\ref{thm:strong-all-primes}) and the Lagrange-type theorem (Theorem~\ref{thm:lagrange}) with Lemma~\ref{lem:a-k-mod-k}, and the last two are in the second repository.  The Lean proof of the lift uses a different route from Section~\ref{sec:lift}, namely a route through connected cumulants.  The Lean statement of the Lagrange theorem asserts that $n![x^n]\exp(xG)F^n$ equals $A_w(n)$ for some integer weight sequence $w$ and gives the congruences, and Corollary~\ref{cor:n-n-1} is formalized under the hypothesis $k\mid A_w(k)-1$ for all $k$ (which is checked by hand when $m\mid w_m$).  The proofs in Section~\ref{sec:sequences} are short deductions from the theorems above, and its generating-function identities are checked by exact computation.

\appendix
\section{Two technical proofs}\label{app:technical}

\subsection*{Proof of Lemma~\ref{lem:mixed-support-factorial}}

\begin{proof}
Assume by symmetry that $v_p(i!)\le v_p(j!)$.  The left side is
\[
 v_p\binom ni+v_p((k)_j).
\]
The identity $\binom ni=(n/i)\binom{n-1}{i-1}$ gives
$v_p\binom ni\ge a-v_p(i)$, while
$(k)_j=k(k-1)_{j-1}$ and the integrality of
$\binom{k-1}{j-1}$ give
\[
 v_p((k)_j)\ge b+v_p((j-1)!).
\]
It remains to note that
\begin{equation}\label{eq:factorial-order-sublemma}
 v_p(i!)\le v_p(j!)\quad\Longrightarrow\quad
 v_p(i)\le v_p((j-1)!)+1.
\end{equation}
Indeed, put $c=v_p(i)$.  The assertion is immediate for $c\le1$.
If $c\ge2$ and $v_p((j-1)!)\le c-2$, then
$i\ge p^c$ implies
\[
 v_p(i!)\ge v_p((p^c)!)=1+p+\cdots+p^{c-1}.
\]
On the other hand $v_p(j!)=v_p((j-1)!)+v_p(j)$, so the assumed ordering
forces
\[
 v_p(j)\ge 1+p+\cdots+p^{c-1}-c+2.
\]
But $v_p((j-1)!)\le c-2$ implies
$\lfloor(j-1)/p\rfloor\le c-2$, hence $j\le p(c-1)$, contradicting
\[
 j\ge p^{\,1+p+\cdots+p^{c-1}-c+2}>p(c-1).
\]
Thus \eqref{eq:factorial-order-sublemma} holds, and the two preceding lower
bounds sum to at least $a+b-1$.
\end{proof}

\subsection*{Sharpness of the lift for odd primes}

\begin{proposition}\label{prop:lift-sharp-odd}
Let $p$ be an odd prime and $k=pq$ with $s=v_p(k)$.  Then the coefficient of $X_p^{\,k-2}X_{2p}$ in $C_{pk}-C_k^{\,p}$ has $p$-adic valuation exactly $2s$.  Hence the exponent in Theorem~\ref{thm:strong-all-primes} cannot be improved for odd $p$.
\end{proposition}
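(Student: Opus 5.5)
The plan is to prove the proposition by isolating the unique contribution of minimal $p$-adic valuation inside the decomposition \eqref{eq:lift-sum}, in the same way as in the proof of Proposition~\ref{prop:prime-witness}. Write $\mu=X_p^{\,k-2}X_{2p}$; this monomial has total degree $p(k-2)+2p=pk$, as it should. By Lemma~\ref{lem:connected-decomposition} with $m=p$, the coefficient $[\mu](C_{pk}-C_k^{\,p})$ is the sum, over the non-singleton set partitions $\pi$ of the $p$ blocks, of $[\mu]\prod_{B\in\pi}\mathrm{Conn}_{|B|}(k)$. Combinatorially, it counts the permutations of the $pk$ points of type $(p^{k-2},2p)$ that do not preserve every block. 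The cycle type is very rigid: there is exactly one $2p$-cycle, and all other cycles have length $p$. I would sort these permutations by which cycles cross block boundaries.

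\emph{The main term.} I would first isolate the permutations in which the $2p$-cycle meets exactly two blocks, with exactly $p$ points in each, and every $p$-cycle lies inside a single block. There are $\binom p2$ choices for the pair of blocks. Each such pair contributes the factor $\frac1{2p}\binom{2p}{p}\fall{k}{p}^2$ for the mixed cycle. The remaining $k-p$ points of each of the two blocks, and the $k$ points of each other block, are then filled with $p$-cycles, giving $\bigl((k-p)!/(p^{q-1}(q-1)!)\bigr)^2\bigl(k!/(p^{q}q!)\bigr)^{p-2}$ with $k=pq$. The block-internal factors are the numbers of fixed-point-free permutations built from $p$-cycles, and I would check via Legendre's formula that they are prime to $p$ after accounting for the two blocks. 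The mixed factor has valuation $2s-1$ exactly, as in the central term of Proposition~\ref{prop:prime-witness}: $v_p\binom{2p}{p}=0$, $v_p(\fall kp)=s$, and we divide by $p$. Together with $v_p\binom p2=1$ for odd $p$, this gives valuation exactly $2s$. This step is a direct Legendre computation, and I expect the unit check on the block-internal counts to be the only care needed.

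\emph{All other configurations.} Everything else must be shown to have valuation at least $2s+1$. These are the configurations where the $2p$-cycle splits unevenly between two blocks, or touches three or more blocks, or some $p$-cycle mixes blocks, or two disjoint pairs of blocks interact. For two or more interacting components, the product of two defects has valuation at least $2(2s-1)\ge2s+1$ once $s\ge1$ and $p$ is odd, except in the case $s=1$, which I would treat by hand using the extra factor from choosing the pairs. For an uneven split of a $2p$-cycle between two blocks, the non-central terms of \eqref{eq:witness-sum} already carry valuation at least $a+b=2s$, and the factor $\binom p2$ adds one more. Mixing $p$-cycles between two blocks is handled by the same binomial valuation argument, using $v_p\binom pi=1$ for $0<i<p$.

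\emph{The hard case, and the main obstacle.} The genuine difficulty is a connected piece on $j\ge3$ blocks. Lemma~\ref{lem:connected-divisible} gives only $k^{j-1}$, which for $j=3$ is valuation $2s$, exactly the borderline. I would strengthen it for this particular monomial by letting $S_k^{\,j}$ act by conjugation, rather than just the rotations, and bounding centralizers as in the proof of Theorem~\ref{thm:optimal-divisor}. A mixed cycle of length $p$ or $2p$ meeting three blocks has at most $2p-2$ points in some block, so the free-rotation count should gain an extra factor from the orbit of the set partition of points within each block. Concretely, I would aim to show that the $S_k$-orbit of the choice of the mixed points in each block contributes a binomial coefficient $\binom{k}{i}$ with $p\nmid i$ (or $i<p$), and hence an extra factor $p^{s}$ beyond a single rotation. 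If this refinement fails in general, the fallback is to sum the $j$-block terms over the $\binom pj$ choices of blocks, since $v_p\binom pj=1$ for $3\le j\le p-1$, and treat $j=p$ separately.
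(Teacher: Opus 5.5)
There is a genuine gap. Your main term is computed correctly: the $p/p$ split of the $2p$-cycle with all $p$-cycles internal has valuation exactly $2s$. But you never establish that all other configurations have valuation at least $2s+1$, and that claim is the whole proposition.

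The fact you overlook is a residue constraint. In a connected piece, the points of a block that are not on mixed cycles are covered by cycles of length $p$ or $2p$ inside that block. Since $p\mid k$, \emph{every block meets the mixed cycles in a multiple of $p$ points}. This breaks two of your steps.

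First, an uneven split of the $2p$-cycle between two blocks never occurs with all $p$-cycles internal; it always forces mixed $p$-cycles. So the non-central terms of \eqref{eq:witness-sum}, which count configurations with fixed points off the mixed cycle, do not describe it. Likewise the appeal to $v_p\binom pi=1$ counts nothing here. The only general bound you have for these two-block configurations is the orbit bound in the proof of Theorem~\ref{thm:optimal-divisor}. It gives $2s-1$ per orbit, hence only $2s$ after the factor $\binom p2$. That is the same valuation as your main term, so nothing isolates it.

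Second, your refinement for pieces on $j\ge3$ blocks needs a block with $p\nmid i$ mixed points, which never happens. For $p\mid i$ the binomial $\binom ki$ can be a unit, for instance $\binom{k}{p^s}\equiv k/p^s\pmod p$. Your fallback via $v_p\binom pj=1$ does cover $p\ge5$. But for $p=3$ the only such piece has $j=3=p$, so there is no binomial factor, and Lemma~\ref{lem:connected-divisible} gives exactly the borderline $k^2$. For $p=3$ you have no argument for the three-block piece at all.

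The missing ingredient is arithmetic, not combinatorial. Let $U_n=(pn)!/(p^n\,n!)$, the product of the integers up to $pn$ prime to $p$. You need $U_{iq}\equiv U_q^{\,i}\pmod{p^s}$. This holds because $p^s\mid k$ makes the product of the units in each interval $(\ell k,(\ell+1)k]$ congruent to $U_q$ modulo $p^s$.

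With this congruence the paper skips the decomposition entirely:
\begin{itemize}
\item The cycle-index formula gives $[X_p^{k-2}X_{2p}]C_{pk}=pU_k\,k(k-1)/2$.
\item Exactly one of the $p$ factors of $C_k^{\,p}$ supplies $X_{2p}$, so $[X_p^{k-2}X_{2p}]C_k^{\,p}=p^2U_q^{\,p}\,q(q-1)/2$.
\item The difference is $\tfrac{p^2q}{2}E$ with $E=-(U_k-U_q^{\,p})+q(pU_k-U_q^{\,p})$. The first summand has valuation at least $s$ and the second exactly $s-1$, giving $2s$.
\end{itemize}

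The same congruence would also repair your route. Write $U_{iq}=U_q^{\,i}(1+\epsilon_i)$ and $\eta=\binom{2p-1}{p-1}-1$. Then the non-main part of $[X_p^{2q-2}X_{2p}]\mathrm{Conn}_2(k)$ is $pqU_q^2\bigl((2q-1)\epsilon_2-q\eta\bigr)$. For $p=3$ the three-block coefficient is $\tfrac{9q}{2}U_q^3\bigl((3q-1)\epsilon_3-(5q-3)\epsilon_2\bigr)$. These have valuation at least $2s$ and $2s+1$ respectively. At that point, however, the decomposition adds nothing to the direct computation.
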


\begin{proof}
For $n\ge1$ put $U_n=(pn)!/(p^n\,n!)$, the product of the integers up to $pn$ that are prime to $p$, which is a $p$-adic unit.  The coefficient of $X_p^{\,k-2}X_{2p}$ in $C_{pk}$ is
\[
c_1=\frac{(pk)!}{p^{k-2}(k-2)!\cdot 2p}=p\,U_k\,\frac{k(k-1)}{2}.
\]
In $C_k$ the only monomials that can contribute to this monomial in $C_k^{\,p}$ are $X_p^{\,q}$, with coefficient $U_q$, and $X_p^{\,q-2}X_{2p}$, with coefficient $p\,U_q\,q(q-1)/2$.  One of the $p$ factors must supply $X_{2p}$, so its coefficient in $C_k^{\,p}$ is $c_2=p^2U_q^{\,p}\,q(q-1)/2$ (and $c_2=0$ when $k=p$).  Hence
\[
c_1-c_2=\frac{p^2q}{2}\,E,\qquad E=(k-1)U_k-(q-1)U_q^{\,p}=-(U_k-U_q^{\,p})+q\,(pU_k-U_q^{\,p}).
\]
For odd $p$ the product of all units modulo $p^s$ in one full period is $-1$.  Writing $k=p^sm$, both $U_q$ and $U_k$ are products of units over whole periods, namely $m$ periods for $U_q$ and $pm$ for $U_k$, so $U_q\equiv U_k\equiv(-1)^m\pmod{p^s}$ and therefore $U_k\equiv U_q^{\,p}\pmod{p^s}$.  The first term of $E$ thus has valuation at least $s$.  In the second term $pU_k-U_q^{\,p}$ is a unit, so it has valuation exactly $v_p(q)=s-1$.  Hence $v_p(E)=s-1$ and
\[
v_p(c_1-c_2)=2+(s-1)+(s-1)=2s.\qedhere
\]
\end{proof}

\section*{Acknowledgments}
The author thanks the contributors to the OEIS for the sequences and conjectures that motivated this work.  The author also thanks Gabor Lippner for looking at an earlier version of this paper and for helpful comments.  The author used AI tools (large language models, in several different systems) for exploratory discussion, for finding and checking proofs and computations, for drafting and revising the text, and for the Lean~4 formalization. The author independently checked the mathematical arguments, computations, and the final manuscript. The shift congruence, the exact content formula, the explicit power-of-two residue, the lifting theorem at every prime and the Lagrange-type theorem have additionally been formally verified in the Lean~4 proof assistant using Mathlib, with no \texttt{sorry} and only Lean's standard axioms.  The formalization is available at \url{https://github.com/ahaankallat/congruence-phenomena-lean} and \url{https://github.com/ahaankallat/congruence-phenomena-higher-order-lean}, and the paragraph on Lean in Section~\ref{sec:verification} describes them.

\end{document}